\title{Symmetric representation rings are \(\lambda\)-rings}
\author{Marcus Zibrowius}
\chardef\bslash=`\\ 
\theoremstyle{plain} 
\newtheorem{thm}{Theorem}[section]\crefname{thm}{theorem}{theorems}
\crefname{mdthm}{theorem}{theorems}
\newtheorem*{thm*}{Theorem}
\newtheorem{cor}[thm]{Corollary}\crefname{cor}{corollary}{corollaries}
\crefname{mdcor}{corollary}{corollaries}
\newtheorem*{cor*}{Corollary}
\newtheorem{lem}[thm]{Lemma}\crefname{lem}{lemma}{lemmas}
\newtheorem{mdlem}[thm]{Lemma}\crefname{mdlem}{lemma}{lemmas}
\newtheorem*{lem*}{Lemma}
\newtheorem{prop}[thm]{Proposition}\crefname{prop}{proposition}{propositions}
\newtheorem{mdprop}[thm]{Proposition}\crefname{mdprop}{proposition}{propositions}
\newtheorem{lemSublagrangian}[thm]{Sub-Lagrangian Reduction}\crefname{lemSublagrangian}{Sub-Lagrangian Reduction}{}
\newtheorem{lemFiltration}[thm]{Filtration Lemma}\crefname{lemFiltration}{the Filtration Lemma}{}
\newtheorem{lemGeneration}[thm]{Generation Lemma}\crefname{lemGeneration}{the Generation Lemma}{}
\newtheorem{lemLineGeneration}[thm]{Line Generation Lemma}\crefname{lemLineGeneration}{the Line Generation Lemma}{}
\theoremstyle{definition}
\newtheorem{defn}[thm]{Definition}\crefname{defn}{definition}{definitions}
\newtheorem*{defn*}{Definition}
\theoremstyle{remark}
\crefname{example}{example}{examples}
\newtheorem*{example*}{Example}
\newtheorem{rem}[thm]{Remark}\crefname{rem}{remark}{remarks}
\newtheorem*{rem*}{Remark}
\newtheorem*{warning*}{Warning}
\newtheorem*{hilfslem*}{Lemma}
\newcommand*{\xqed}[1]{\leavevmode\unskip\penalty9999 \hbox{}\nobreak\hfill\quad\hbox{#1}}
\newcommand*{\hilfsqed}{\xqed{$\triangle$}}
\newcommand*{\Z}{\mathbb{Z}}
\newcommand*{\C}{\mathbb{C}}
\newcommand*{\factor}[2]{\left.\raisebox{.1em}{\ensuremath{#1}}\middle/\raisebox{-.1em}{\ensuremath{#2}}\right.}
\providecommand*{\abs}[1]{\lvert#1\rvert}
\renewcommand*{\bar}{\overline}
\renewcommand*{\tilde}{\widetilde}
\DeclareMathOperator{\id}{id}
\DeclareMathOperator{\im}{im}
\DeclareMathOperator{\ind}{ind}
\DeclareMathOperator{\Hom}{Hom}
\DeclareMathOperator{\End}{End}
\DeclareMathOperator{\dimhom}{hom}
\DeclareMathOperator{\dimend}{end}
\newcommand*{\K}{\mathrm K}
\newcommand*{\GW}{\mathrm{GW}}
\newcommand*{\Rep}[1]{#1\text{-}\mathcal Rep}
\newcommand*{\KRep}[1]{\K(\Rep{#1})}
\newcommand*{\GWRep}[1]{\GW(\Rep{#1})}
\newcommand{\GL}[1]{\mathrm{GL}_{#1}}
\newcommand{\OO}[1]{\mathrm{O}_{#1}}
\newcommand{\SO}[1]{\mathrm{SO}_{#1}}
\newcommand{\Gm}{\mathbb G_m}
\newcommand{\twist}[2]{{^{#1}#2}}
\newcommand{\M}[1]{\begin{pmatrix}#1\end{pmatrix}}
\renewcommand*{\vec}[1]{\pmb{#1}}
\newcommand*{\cat}[1]{{\mathcal{#1}}}
\newcommand*{\dual}{\vee}
\newcommand*{\vecspan}[1]{\left\langle#1\right\rangle}
\newcommand{\ctext}[2]{\text{\parbox{#1}{\centering #2}}}
\newcommand{\ltext}[2]{\text{\parbox{#1}{\raggedright#2}}}
\newcommand{\lttext}[2]{\text{\parbox[t]{#1}{\raggedright#2}}}
\newcommand*{\IPS}[1]{\Lambda(#1)}
\newcommand{\mm}[1]{\left(\begin{smallmatrix}#1\end{smallmatrix}\right)}
\newcommand{\diag}[1]{\mathrm{diag}\left(#1\right)}
\newcolumntype{M}{>{$}c<{$}}
\newcommand*{\ie}{\mbox{i.\thinspace{}e.\ }}
\newcommand*{\eg}{\mbox{e.\thinspace{}g.\ }}
\newcommand*{\cf}{\mbox{c.\thinspace{}f.\ }}
\newcommand*{\define}[1]{\textbf{#1}}
\newcommand*{\GrothendieckWitt}{Grothen\-dieck-Witt }
\newcommand*{\safelambda}{\texorpdfstring{$\lambda$}{lambda}}
\begin{document}

\maketitle
\begin{abstract}
\noindent
   The representation ring of an affine algebraic group scheme can be endowed with the structure of a (special) \(\lambda\)-ring.
   We show that the same is true for the ring of symmetric representations, \ie for the \GrothendieckWitt ring of the representation category, for any affine algebraic group scheme over a field of characteristic not two.
\end{abstract}
\tableofcontents

\section*{Introduction}
It is well-known that both the complex and the real representation ring of any compact Lie group are \(\lambda\)-rings\footnote{%
By a \(\lambda\)-ring, we mean a ``special \(\lambda\)-ring''---see the paragraph on terminology below.} \cite{AtiyahTall}.
Similarly, for any affine algebraic group scheme \(G\) over a field, with representation category \(\Rep{G}\), the exterior power operations endow the representation ring \(\KRep{G}\) with the structure of a \(\lambda\)-ring.
Indeed, this is a direct consequence of Serre's beautiful 1968 paper ``Groupes de Grothendieck des schémas en groupes réductifs déployés'' \cite{Serre},  in which Serre shows that the representation ring of a split reductive group over an arbitrary field can be computed in the same way as---and is in fact isomorphic to---the representation ring of the corresponding group over \(\C\).  As Serre mentions in his introduction, establishing the \(\lambda\)-ring structure was in fact one of his motivations for writing the article.

The purpose of the present article is to complete the picture by establishing the \(\lambda\)-ring structure on the ``symmetric representation ring''\footnote{%
See footnote \ref{footnote:terminology} on page~\pageref{footnote:terminology} for a justification of this neologism.}
 \(\GWRep{G}\), generated by isotropy classes of representations equipped with equivariant non-degenerate symmetric forms.  This ring \(\GWRep{G}\) is to the usual representation ring \(\KRep{G}\) what the real representation ring is to the complex representation ring in topology.  See \Cref{sec:symRepRings} for precise definitions.  We will show:
\begin{thm*}
  For any affine algebraic group scheme \(G\) over a field of characteristic not two, the exterior power operations induce a \(\lambda\)-ring structure on the symmetric representation ring \(\GWRep{G}\).
\end{thm*}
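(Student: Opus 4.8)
The plan is to define the operations $\lambda^n$ on $\GW(G)$ by taking exterior powers of symmetric representations, to verify that they assemble into a pre-$\lambda$-ring structure, and then to deduce the ``special'' axioms by a splitting-principle argument.

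For the operations: given a non-degenerate symmetric representation $(V,\phi)$, with $\phi\colon V\xrightarrow{\ \sim\ }V^\dual$, the exterior power $\Lambda^n V$ carries the $G$-equivariant form $\Lambda^n\phi\colon\Lambda^n V\to\Lambda^n(V^\dual)\cong(\Lambda^n V)^\dual$, given on pure wedges by the determinant $\langle v_1\wedge\cdots\wedge v_n,\,w_1\wedge\cdots\wedge w_n\rangle=\det\bigl(\phi(v_i)(w_j)\bigr)_{i,j}$, and this form is again non-degenerate and symmetric. Setting $\lambda_t[V,\phi]=\sum_{n\ge0}[\Lambda^n V,\Lambda^n\phi]\,t^n$, I would first check that $\lambda_t$ is a homomorphism from the monoid of isometry classes of symmetric representations under $\perp$ to the multiplicative group $1+t\,\GW(G)[[t]]$: this amounts to the orthogonal decomposition $\Lambda^n(V\oplus W)=\bigoplus_{i+j=n}\Lambda^i V\otimes\Lambda^j W$ being compatible with the forms, \ie restricting to $(\Lambda^i\phi)\otimes(\Lambda^j\psi)$ on each summand, which is a direct computation. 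Passing to the Grothendieck group of this monoid is then formal; what requires proof is that $\lambda_t$ respects the remaining defining relation of $\GW(G)$, namely that the class of a metabolic space equals that of the hyperbolic space on its Lagrangian.

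That is the first real point, and it is what a Filtration Lemma and a Sub-Lagrangian Reduction lemma are needed for. Given a metabolic space $(M,\mu)$ with Lagrangian $L$, so that $0\to L\to M\to L^\dual\to0$, the subobject $L$ induces a $G$-stable filtration on $\Lambda^n M$ whose associated graded is $\bigoplus_i\Lambda^i L\otimes\Lambda^{\,n-i}L^\dual$, matching term by term the decomposition of $\Lambda^n\bigl(\mathrm H(L)\bigr)$. I would prove that $(\Lambda^n M,\Lambda^n\mu)$ carries a Lagrangian adapted to this filtration --- assembled from the summands $\Lambda^i L\otimes\Lambda^{\,n-i}L^\dual$ with $i$ below the middle, a sub-Lagrangian reduction disposing of the middle summand --- so that $[\Lambda^n M,\Lambda^n\mu]=[\Lambda^n\mathrm H(L),\,\Lambda^n\mathrm{hyp}]$ for every $n$. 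This makes $\lambda_t$ descend to $\GW(G)$, and together with $\lambda^0=1$, $\lambda^1=\id$ and the multiplicativity above, it equips $\GW(G)$ with a pre-$\lambda$-ring structure.

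It remains to establish the two families of universal identities --- for $\lambda^n(xy)$ and for $\lambda^m(\lambda^n x)$ --- that make a pre-$\lambda$-ring special. For this I would invoke the splitting principle: for any finite set of elements of $\GW(G)$ it suffices to exhibit an injective homomorphism of pre-$\lambda$-rings $\GW(G)\hookrightarrow\GW(G')$ in which those elements become $\Z$-linear combinations of products of ``one-dimensional'' classes --- the rank-one symmetric spaces $\langle a\rangle$, for which $\lambda^i\langle a\rangle=0$ when $i\ge2$ --- and of hyperbolic classes $\mathrm H(\chi)$ on characters $\chi$. On such expressions the special identities reduce, for the rank-one factors, to the trivial one-dimensional case, and for the hyperbolic factors to the projection formula $\mathrm H(a)\cdot\xi=\mathrm H\!\bigl(a\cdot f(\xi)\bigr)$ (with $f\colon\GW(G')\to\K(G')$ the forgetful map), the compatibility $f\circ\lambda^n=\lambda^n\circ f$ which is immediate from the construction, the explicit value of $\lambda_t$ on a hyperbolic plane, and the already-known special $\lambda$-ring structure on $\K(G')$; injectivity then transports the identities back to $\GW(G)$. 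Producing the extension $\GW(G)\hookrightarrow\GW(G')$ with these properties --- enough splitting of symmetric representations together with injectivity --- is the content of a Generation Lemma and a Line Generation Lemma, and I expect it to be the main obstacle: splitting off rank-one and hyperbolic summands after a base-field extension and a suitable enlargement of $G$ is a question about the representation theory of affine group schemes, in the spirit of Serre's computation of $\K$ for split reductive groups, rather than a formal manipulation, and arranging the enlargement so that the map $\GW(G)\to\GW(G')$ is indeed injective is the delicate part.
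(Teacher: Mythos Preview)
Your treatment of the pre-\(\lambda\)-structure is correct and matches the paper's: exterior powers on isometry classes, additivity via the orthogonal decomposition of \(\Lambda^n(V\oplus W)\), and descent through the metabolic relation using the Filtration Lemma together with Sub-Lagrangian Reduction. The paper carries out exactly these steps.

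The gap is in your plan for the special axioms. You propose to find, for each finite set of elements, an \emph{injective} pre-\(\lambda\)-homomorphism \(\GW(G)\hookrightarrow\GW(G')\) in which those elements become sums of line elements and hyperbolic classes. But for general \(G\) there is no obvious candidate for such an injection: the natural guess, restriction to a maximal split torus \(T\), fails badly on the Grothendieck--Witt level because almost no characters of \(T\) are symmetric, so \(\GW(T)\) is far too small to receive \(\GW(G)\) injectively. You correctly flag this as ``the delicate part'', but you offer no construction.

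The paper resolves this in two moves that differ from your outline. First, it reverses the direction: rather than embedding \(\GW(G)\) into something split, it uses the \emph{detection} criterion, observing that any pair of classes in \(\GW(G)\) lies in the image of a \(\lambda\)-homomorphism \(\GW(\OO{m_1}\times\OO{m_2})\to\GW(G)\) (stabilise by adding a trivial summand with the negative form to land in a split orthogonal group). This reduces everything to the single family \(\OO{m_1}\times\OO{m_2}\). Second, for these groups the paper does produce an injective restriction, but not to a torus: it introduces the \emph{extended torus} \(T\rtimes\Z/2\), whose simple modules are all symmetric and of rank at most two, shows that \(\GW(\SO{2n_1+1}\times\SO{2n_2+1})\hookrightarrow\GW((T\times T)\rtimes\Z/2)\), verifies the \(\lambda\)-identities in the target by a direct rank-two computation, and finally passes to \(\OO{m_1}\times\OO{m_2}\) via the Line Generation Lemma. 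Your idea of reducing hyperbolic classes to \(\K\)-theory via the projection formula is not the route taken; the paper instead checks the identities for the rank-two generators \([e^{\vec\gamma}]^+\) explicitly, and notes that a shortcut through \(\K(T\rtimes\Z/2)\) works only when \(\sqrt{2}\in F\).
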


To the best of our knowledge, this fundamental structure on \(\GWRep{G}\) has not been exposed before, except in the case when \(G\) is the trivial group:  the \(\lambda\)-ring structure on the Grothendieck-Witt ring of a field has been studied by McGarraghy \cite{McGarraghy:exterior}.

\paragraph{\(\lambda\)-Terminology.}
There are at least two problems with the term ``\(\lambda\)-ring''.  Firstly, the term is ambiguous: while Grothendieck originally distinguished between
\[
\begin{aligned}
&  &&\text{(1) ``\(\lambda\)-rings'' and}  && \text{(2) ``special \(\lambda\)-rings''} && \text{\cite{SGA6}*{Exposé~0~App},}
\shortintertext{Berthelot instead refers to these objects as}
&  &&\text{(1)  pre-\(\lambda\)-rings and}  && \text{(2) \(\lambda\)-rings} && \text{\cite{SGA6}*{Exposé~V}.}
\end{aligned}
\]
In this article, we follow Berthelot.  This seems to be the current trend, and it has the merit that the shorter term is reserved for the more natural object.  In any case, the bulk of this article is devoted to proving that we have a structure of type~(2), not just of type~(1).

Secondly, the term ``\(\lambda\)-ring'' is misleading in that it puts undue emphasis on \(\lambda\)-operations\slash exterior powers.  For example, from a purely algebraic perspective, the symmetric powers have just as good a claim to the title as the exterior powers.   We refer to \cite{Borger:Positivity} for a beautiful coordinate-free definition of \(\lambda\)-rings as ``rings equipped with all possible symmetric operations'' in a precise sense. 
Suffice it to remark here that the existence of a \(\lambda\)-structure on a ring includes the existence of many other natural operations such as symmetric powers and Adams operations.

That said, we will nevertheless work with the traditional definition in terms of exterior powers below. One technical reason for this is that exterior powers behave well under dualization:  the dual of the exterior power of a representation is the exterior power of the dual representation, in any characteristic.  The same is not true of symmetric operations.  Thus, in this case the existence of well-defined symmetric powers on \(\GWRep{G}\) follows only \emph{a posteriori} from the existence of a \(\lambda\)-structure.

\paragraph{Outline. }
The article begins with a certain amount of overhead.  We recall some definitions and  facts concerning symmetric representations, including a discussion of the additive structure of \(\GWRep{G}\) following Calmès and Hornbostel's preprint \cite{CalmesHornbostel:reductive}.

Our proof in \Cref{sec:lambdaRings} that the exterior powers induce well-defined maps on \(\GWRep{G}\) follows a similar pattern as the usual argument for \(\KRep{G}\), using in addition only the well-known technique of ``sub-Lagrangian reduction''.

When the ground field is algebraically closed, the fact that the resulting pre-\(\lambda\)-structure on \(\GWRep{G}\) is a \(\lambda\)-structure can easily be deduced in the same way as in topology:  in this case,  the forgetful map \(\GWRep{G}\to\KRep{G}\) exhibits \(\GWRep{G}\) as a sub-\(\lambda\)-ring of the \(\lambda\)-ring \(\KRep{G}\).  However, over general fields this argument breaks down.
\Cref{sec:structure-is-special} is devoted to mending it: we reduce to the ``universal case'', \ie the case when \(G\) is a product of split orthogonal groups, show that the symmetric representation ring of such \(G\) embeds into the symmetric representation ring of an extension of a maximal split torus, and verify that the latter is a \(\lambda\)-ring by a direct calculation.

The implications of the universal case are in fact not restricted to representation rings.  The main application we have in mind is to the Grothendieck-Witt ring of vector bundles on a scheme, in the same way that Serre's result is applied to the \(\K\)-ring of vector bundles in \cite{SGA6}*{Exposé~VI, Theorem~3.3}.  Details are to appear in forthcoming work.

\paragraph{Notation and conventions. }
Throughout, \(F\) denotes a fixed field of characteristic not two.  Our notation for group schemes, characters etc.\ tends to follow \cite{Jantzen}.  All representations are assumed to be finite-dimensional.

\section{Symmetric representations}\label{sec:symreps}
An affine algebraic group scheme is a functor \(G\) from the category of \(F\)-algebras to the category of groups representable by a finitely-generated \(F\)-algebra:
\[
\begin{aligned}
G\colon \cat Alg_F &\to \cat Groups \\
A &\mapsto G(A)
\end{aligned}
\]
We assume that the reader is familiar with the basic notions surrounding such group schemes and their representations as can be found in \cite{Waterhouse} and \cite{Jantzen} or \cite{Serre}.  In particular, while the basic notions involved in the statement of our main theorem are recalled below, we undertake no attempt to explain the structure and representation theory of reductive groups used in the proof.

The terms \define{representation} of \(G\) and \(G\)-\define{module} are used interchangeably to denote a \emph{finite-dimensional} \(F\)-vector space \(M\) together with a natural \(A\)-linear action of \(G(A)\) on \(M\otimes A\) for every \(F\)-algebra \(A\).
Equivalently, such a representation may be viewed as a group homomorphism
\(G\to \GL{}(M)\).
 Given two \(G\)-modules \(M\) and \(N\), the set of \(G\)-equivariant morphisms from \(M\) to \(N\) is denoted \(\Hom_G(M,N)\).

Many constructions available on vector spaces can be extended to \(G\)-modules.
In particular, \(G\)-modules form an \(F\)-linear abelian category \(\Rep{G}\).
Tensor products of \(G\)-modules, the dual \(M^\vee\) of a \(G\)-module \(M\) and its exterior powers \(\Lambda^i(M)\) are also again \(G\)-modules in a natural way.
There is, however, an important difference between the categories of \(G\)-modules and the category of vector spaces:  not every \(G\)-module is semi-simple, and a short exact sequence of \(G\)-modules does not necessarily split.

The duality functor \(M\mapsto M^\vee\) and the double-dual identification \(M\cong M^{\vee\vee}\) give \(\Rep{G}\) the structure of a category with duality, which immediately gives rise to the notion of a symmetric \(G\)-module in the sense of \cite{QSS}.  We hope there is no harm in providing a direct definition, even if we occasionally fall back into the abstract setting later on. We first discuss all relevant notions on the level of vector spaces.

\bigskip
A \define{symmetric vector space} is a vector space \(M\) together with a linear isomorphism \(\mu\colon M\to M^\vee\) which is symmetric in the sense that \(\mu\) and \(\mu^\vee\) agree up to the usual double-dual identification \(\omega\colon M\cong M^{\vee\vee}\).  The \define{orthogonal sum} \mbox{\((M,\mu)\perp(N,\nu)\)} of two symmetric vector spaces is defined as the direct sum \(M\oplus N\) equipped with the symmetry \(\mu\oplus\nu\).  Tensor products and exterior powers of symmetric vector spaces can be defined similarly, using the canonical isomorphisms \(M^\vee\otimes N^\vee\cong (M\otimes N)^\vee\) and \(\Lambda^i(M^\vee)\cong (\Lambda^i M)^\vee\).\footnote{%
In characteristic zero, one can likewise form symmetric powers \(S^i(M,\mu)\) of symmetric vector spaces.  However, we do not have a canonical isomorphism \(S^i(M^\vee)\cong (S^i M)^\vee\) in positive characteristic (\cf \cite{McGarraghy:symmetric} or \cite{Eisenbud}*{App.~A.2}).}

A morphism from  \((M,\mu)\) to \((N,\nu)\) is a morphism \(\iota\colon M\to N\) compatible with \(\mu\) and \(\nu\) in the sense that \(\iota^\vee\nu\iota = \mu\). An isomorphism with this property is an \define{isometry}. The isometries from \((M,\mu)\) to itself form a reductive subgroup \(\OO{}(M,\mu)\) of \(\GL{}(M)\).
If we equip \(F^{2n}\) and \(F^{2n+1}\) with the standard symmetric forms given by
\begin{equation}\label{eq:standard-symmetric-forms}
\mm{
  0&1& & & & \\
  1&0& & & & \\
  & &\dots&&&\\
  & & &\dots&&\\
  & & & &0&1\\
  & & & &1&0}
\quad\text{ and }\quad
\mm{
  0&1& & & & & \\
  1&0& & & & & \\
  & &\dots&&&&\\
  & & &\dots&&&\\
  & & & &0&1& \\
  & & & &1&0& \\
  & & & & & &1\\
}
\end{equation}
with respect to the canonical bases, we obtain the usual split orthogonal groups \(\OO{2n}\) and \(\OO{2n+1}\).

We also have a canonical symmetry on any vector space of the form \(M\oplus M^\vee\), given by interchanging the factors.
We write \(H(M) := (M\oplus M^\vee, \mm{0 & 1\\ 1 & 0})\) for this symmetric vector space; it is the \define{hyperbolic space} associated with \(M\).
The associated orthogonal group \(\OO{}(H(M))\) is isomorphic to \(\OO{2\dim M}\).

A \define{sub-Lagrangian} of a symmetric vector space \((M,\mu)\) is a subspace \(i\colon N\hookrightarrow M\) on which \(\mu\) vanishes, \ie for which \(i^\vee\mu i = 0\). Equivalently, if for an arbitrary subspace \(N\subset M\) we define
\[
N^\perp := \{m \in M \mid \mu(m)(n) = 0 \text { for all } n\in N\},
\]
then \(N\) is a sub-Lagrangian if and only if \(N\subset N^\perp\).
If in fact \(N^\perp = N\), we say that \(M\) is \define{metabolic} with \define{Lagrangian} \(N\).
For example, \(H(M)\) is metabolic with Lagrangian \(M\).

\bigskip
A \define{symmetric \(G\)-module} is defined completely analogously, as a pair \((M,\mu)\) consisting of a \(G\)-module \(M\) and an isomorphism of \(G\)-modules \(\mu\colon M\to M^\vee\) which is symmetric in the sense that \(\mu\) and \(\mu^\vee\) agree up to the double-dual identification of \(G\)-modules \(\omega\colon M\cong M^{\vee\vee}\).
Equivalently, we may view such a symmetric module
\begin{compactitem}
\item as a symmetric vector space \((M,\mu)\) together with a \(G\)-module structure on \(M\) such that \(\mu\) is \(G\)-equivariant, or
\item as a morphism \(G\to \OO{}(M,\mu)\), where \((M,\mu)\) is some symmetric vector space.
\end{compactitem}
All of the notions introduced for symmetric vector spaces carry over to this situation.

We end this section with a well-known lemma that makes use of the assumption that our field \(F\) has characteristic different from two.
\begin{lem}\label{hyperbolic-lemma}
For any symmetric \(G\)-module \((M,\mu)\), the orthogonal sum \((M,\mu)\oplus (M,-\mu)\) is isometric to the hyperbolic \(G\)-module \(H(M)\).
\end{lem}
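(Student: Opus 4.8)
The plan is to produce an explicit \(G\)-equivariant isometry between the two sides and check its properties by a short computation; the hypothesis \(\characteristic F\neq 2\) will be used exactly once, to divide by \(2\). It is convenient first to unravel notation. The symmetry condition on \(\mu\) says precisely that \(\mu(m)(m')=\mu(m')(m)\) for all \(m,m'\in M\) (once one makes the double-dual identification \(\omega\) implicit); the form on \((M,\mu)\oplus(M,-\mu)\) sends \(\big((x_1,x_2),(y_1,y_2)\big)\) to \(\mu(x_1)(y_1)-\mu(x_2)(y_2)\); and the hyperbolic form on \(H(M)=M\oplus M^\vee\) sends \(\big((m,f),(m',f')\big)\) to \(f(m')+f'(m)\).

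First I would introduce the map
\[
\phi\colon M\oplus M\longrightarrow M\oplus M^\vee,\qquad (x,y)\longmapsto \big(x+y,\ \tfrac12\mu(x-y)\big).
\]
Being assembled from the identity, scalar multiplications, addition and the equivariant isomorphism \(\mu\), it is a morphism of \(G\)-modules, and one checks directly that \((m,f)\mapsto\big(\tfrac12 m+\mu^{-1}(f),\ \tfrac12 m-\mu^{-1}(f)\big)\) is a two-sided inverse — here both \(\mu^{-1}\) and division by \(2\) are available — so \(\phi\) is an isomorphism of \(G\)-modules. It then remains to pull the hyperbolic form back along \(\phi\): expanding \(\tfrac12\mu(x_1-x_2)(y_1+y_2)+\tfrac12\mu(y_1-y_2)(x_1+x_2)\) and using the symmetry identity \(\mu(y_i)(x_j)=\mu(x_j)(y_i)\) to cancel the four mixed terms, the result collapses to \(\mu(x_1)(y_1)-\mu(x_2)(y_2)\). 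Hence \(\phi\) is an isometry \((M,\mu)\oplus(M,-\mu)\xrightarrow{\sim}H(M)\).

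If one prefers a formula-free argument, in the spirit of the sub-Lagrangian reduction technique used later, one can instead observe that the diagonal \(\Delta=\{(x,x)\}\) and the anti-diagonal \(\nabla=\{(x,-x)\}\) are \(G\)-submodules of \((M,\mu)\oplus(M,-\mu)\) on which the form vanishes, that \(\characteristic F\neq 2\) forces \(M\oplus M=\Delta\oplus\nabla\), and that the restriction of the form to \(\Delta\times\nabla\) is the perfect pairing \((x,x')\mapsto 2\mu(x)(x')\); this identifies \(\nabla\cong\Delta^\vee\) and thereby exhibits \((M,\mu)\oplus(M,-\mu)\) as \(H(\Delta)\cong H(M)\).

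I do not expect any genuine obstacle here: the content is a single short computation, and the only points requiring care are the bookkeeping of signs introduced by the \((M,-\mu)\) summand and the suppressed double-dual identification \(\omega\), together with the (automatic) verification that every map constructed is \(G\)-equivariant, since each is built from equivariant pieces.
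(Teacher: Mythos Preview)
Your proof is correct and is essentially the same as the paper's: the paper records the isometry as the composite of the matrices \(\mm{1&1\\ \nicefrac{1}{2} & -\nicefrac{1}{2}}\) and \(\mm{1& 0 \\ 0 & \mu}\), which is exactly your map \(\phi(x,y)=(x+y,\tfrac12\mu(x-y))\). Your diagonal/anti-diagonal variant is a pleasant alternative formulation of the same idea.
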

\begin{proof}
\hfill\\
\(
  (M\oplus M, \mm{\mu & 0 \\ 0 & -\mu}) \cong_{\mm{1&1\\ \nicefrac{1}{2} & -\nicefrac{1}{2}}}
  (M\oplus M, \mm{0 & \mu\\ \mu & 0}) \cong_{\mm{1& 0 \\ 0 & \mu}}
  (M\oplus M^\vee, \mm{0 & 1 \\ 1 & 0})
\)
\end{proof}

\subsection{The symmetric representation ring}\label{sec:symRepRings}
The (finite-dimensional) representations of an affine algebraic group scheme over \(F\) form an abelian category with duality \((\Rep{G},\vee,\omega)\). Its \(\K\)-group and its \GrothendieckWitt group are defined as follows:

\begin{defn}
\(\KRep{G}\) is the free abelian group on isomorphism classes of \(G\)-modules
modulo the relation \(M = M' + M''\) for any short exact sequence of
\(G\)-modules \mbox{\(0\to M'\to M \to M''\to 0\).}

\(\GWRep{G}\) is the free abelian group on isometry classes of symmetric \(G\)-modules modulo the relation \(((M,\mu) \perp (N,\nu)) = (M\mu) + (N,\nu)\) for arbitrary symmetric \((M,\mu)\) and \((N,\nu)\) and the relation \((M,\mu) = H(L)\) for any metabolic \(G\)-module \(M\) with Lagrangian~\(L\).
\end{defn}
We use the established notation \(\K(F)\) and \(\GW(F)\) for the \(\K\)- and \GrothendieckWitt groups of the category of finite-dimensional vector spaces.  So \(\GW(F) = \GWRep{1}\), where \(1\) denotes the trivial constant group scheme.

The tensor product yields well-defined ring structures on both \(\KRep{G}\) and \(\GWRep{G}\).
The ring \(\KRep{G}\) is usually referred to as the \define{representation ring} of \(G\), and we refer to \(\GWRep{G}\) as the \define{symmetric representation ring}\footnote{\label{footnote:terminology}%
Of course, it is not the ring itself but rather its elements that are supposed to be symmetric.  However, our terminology is completely analogous to the established usage of the terms ``complex representation ring'' and ``real representation ring'' in the context of compact Lie groups. More precise alternatives would be ``ring of symmetric representations'' or ``symmetric representations' ring''.}
of \(G\).
They can be related via the forgetful and hyperbolic maps:
\[
\begin{aligned}
 \GWRep{G} &\xrightarrow{F}\KRep{G} \\
 \GWRep{G} &\xleftarrow[H]{} \KRep{G}
\end{aligned}
\]
The forgetful map simply sends the class of \((M,\mu)\) to the class of \(M\), while the hyperbolic map sends \(M\) to \(H(M)\). Note that \(F\) is a ring homomorphism, while \(H\) is only a morphism of groups.\footnote{In fact, \(H\) is a morphism of \(\GWRep{G}\)-modules, but we do not need this.}
We will need the following fact.

\begin{lemSublagrangian}\label{sublagrangian-reduction}
  For any sub-Lagrangian \(N\) of a symmetric \(G\)-module \((M,\mu)\), the symmetry \(\mu\) induces a symmetry \(\bar\mu\) on \(N^\perp/N\). Moreover, in \(\GWRep{G}\) we have
  \[
  (M,\mu) = (N^\perp/N, \bar\mu) + H(N).
  \]
\end{lemSublagrangian}
\begin{proof}
This is essentially Lemma~5.3 in \cite{QSS}, simplified by our assumption that \(2\) is invertible:
  \begin{align*}
    (M,\mu)
    &= -(N^\perp/N,-\bar\mu) + H(N^\perp) &&\text{ by \cite{QSS}*{Lemma~5.3}} \\
    &= (N^\perp/N,\bar\mu) - H(N^\perp/N) + H(N^\perp) &&\text{ by \Cref{hyperbolic-lemma}}\\
    &= (N^\perp/N,\bar\mu) + H(N) && \qedhere
  \end{align*}
\end{proof}

\subsection{The additive structure}
We recall some material from \cite{QSS} and \cite{CalmesHornbostel:reductive} concerning the additive structure of \(\KRep{G}\) and \(\GWRep{G}\).
The group \(\KRep{G}\) is the free abelian group on the isomorphism classes of simple \(G\)-modules.  Given a complete set \(\Sigma\) of representatives of the isomorphism classes of simple \(G\)-modules, we can thus write
\[
\KRep{G} \cong  \Z \vecspan{S}_{S\in\Sigma}.
\]
The structure of \(\GWRep{G}\) is slightly more interesting.
For simplicity, we  concentrate on the case when the endomorphism ring \(\End_G(S)\) of every simple \(G\)-module \(S\) is equal to the ground field \(F\).
This assumption is satisfied by all examples that we later study in more detail.  In particular, it is satisfied by all \(F\)-split reductive groups \cite{Jantzen}*{Cor.~II.2.7 and Prop.~II.2.8}.
It ensures that every simple \(G\)-module is either symmetric, anti-symmetric or not self-dual at all,
and that any two given (anti-)symmetries on a simple \(G\)-module differ at most by a scalar.

Let \(\Sigma_+\in \Sigma\) and \(\Sigma_-\in\Sigma\) be the subsets of symmetric and anti-symmetric objects,
and let  \(\Sigma_0\in\Sigma\) be a subset containing one object for each pair of non-self-dual objects \((S,S^\vee)\).
On each \(S\in\Sigma_+\), we fix a symmetry \(\sigma_s\).
\begin{thm}\label{GW-of-Rep:simplified}
Let \(G\) be an affine algebraic group scheme over \(F\) such that every simple \(G\)-module has endomorphism ring \(F\).
Then we have an isomorphism of  \(\GW(F)\)-modules
\[
\GWRep{G} \cong \GW(F)\vecspan{(S,\sigma_s)}_{S\in\Sigma_+} \oplus \Z\vecspan{H(S)}_{S\in\Sigma_-} \oplus \Z\vecspan{H(S)}_{S\in\Sigma_0}.
\]
\end{thm}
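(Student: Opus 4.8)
The plan is to prove that the evident \(\GW(F)\)-linear map \(\phi\) from the right-hand side of the asserted isomorphism to \(\GW(G)\) is both surjective and injective. For surjectivity I would show by induction on \(\dim_F M\) that every class \((M,\mu)\) lies in the image of \(\phi\). Pick a simple submodule \(S\subseteq M\) and consider the composite \(S\hookrightarrow M\xrightarrow{\mu}M^\vee\twoheadrightarrow S^\vee\); since \(S\) and \(S^\vee\) are simple with \(\End_G(S)=F\), it is either zero or an isomorphism. In the first case \(S\) is a sub-Lagrangian, and \Cref{sublagrangian-reduction} gives \((M,\mu)=(S^\perp/S,\bar\mu)+H(S)\) with \(\dim_F(S^\perp/S)=\dim_F M-2\dim_F S\), so the inductive hypothesis applies to the first summand. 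In the second case \(\mu\) restricts to a non-degenerate form on \(S\), so \(M=S\perp S^\perp\) orthogonally and we apply induction to \((S^\perp,\mu|_{S^\perp})\). This writes every class as a sum of terms \((S,\tau)\) with \(S\) a self-dual simple module and \(H(S)\) with \(S\) an arbitrary simple module. A self-dual simple module carrying a symmetry lies in \(\Sigma_+\), and any two symmetries on it differ by a scalar \(a\in F^\times\), so \((S,\tau)=\langle a\rangle\cdot(S,\sigma_s)\) lies in \(\GW(F)\vecspan{(S,\sigma_s)}\); while for the hyperbolic terms, \(H(S)=H(S^\vee)\) together with \(H(S)=(\langle1\rangle+\langle-1\rangle)\cdot(S,\sigma_s)\) for \(S\in\Sigma_+\) (by \Cref{hyperbolic-lemma}) reduces us to \(S\in\Sigma_-\cup\Sigma_0\). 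Hence \(\phi\) is surjective.

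For injectivity I would compare \(\Rep(G)\) with its semisimplification \(\overline{\Rep(G)}\)---the semisimple category with duality having the same objects but morphisms taken modulo those that factor through modules of strictly smaller length. The quotient functor is duality-preserving and sends a metabolic module with Lagrangian \(L\) to a symmetric object that still admits \(L\) as a Lagrangian over the semisimple category, hence to a hyperbolic object; it therefore induces a homomorphism \(\rho\colon\GW(G)\to\GW(\overline{\Rep(G)})\). Over the semisimple category the Grothendieck--Witt group is computed directly: a symmetric object splits orthogonally into its isotypic components, the \(S\)-component being \((S,\sigma_s)\otimes(V,\beta)\) for a symmetric \(F\)-space \((V,\beta)\) when \(S\in\Sigma_+\), and a multiple of \(H(S)\) when \(S\in\Sigma_-\cup\Sigma_0\) (in the case \(S\in\Sigma_-\) because there \(S\) is only ever tensored with an alternating, hence hyperbolic, \(F\)-space). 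Recording the isometry types of the symmetric components and the multiplicities of the remaining ones identifies \(\GW(\overline{\Rep(G)})\) freely with the right-hand side of the theorem, and by construction \(\rho\circ\phi\) sends every free generator to itself. Since \(\phi\) is surjective and \(\rho\circ\phi=\id\), the map \(\phi\) is an isomorphism.

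The real work is the injectivity step, where one must cope with the failure of semisimplicity of \(\Rep(G)\): checking that passing to \(\overline{\Rep(G)}\) is compatible with the metabolic relation, and carrying out the (routine but somewhat lengthy) computation of \(\GW(\overline{\Rep(G)})\), whose orthogonal isotypic splitting of symmetric objects is the analogue for \(\GW\) of the decomposition \(\K(G)\cong\Z\vecspan{S}_{S\in\Sigma}\). An equivalent route avoiding \(\overline{\Rep(G)}\) is to construct a left inverse of \(\phi\) by hand: the forgetful map \(\GW(G)\to\K(G)\cong\Z\vecspan{S}_{S\in\Sigma}\) detects the hyperbolic summands, while for each \(S\in\Sigma_+\) an ``\(S\)-multiplicity form'' with values in \(\GW(F)\)---obtained by iterated sub-Lagrangian reduction along the self-dual socle filtration of \((M,\mu)\), normalised so that \((S,\sigma_s)\) maps to \(\langle1\rangle\)---detects the corresponding \(\GW(F)\)-summand. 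The hypothesis \(\End_G(S)=F\) keeps the scalar ambiguities manageable, but the \(\Sigma_+\)/\(\Sigma_-\) dichotomy has to be carried throughout; this is in essence the content recalled from \cite{QSS} and \cite{CalmesHornbostel:reductive}.
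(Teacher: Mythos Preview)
The paper does not prove this theorem in full; it attributes the result to \cite{QSS} and \cite{CalmesHornbostel:reductive} and only explains the map \(\alpha\) and sketches its inverse via \cite{QSS}*{Theorem~6.10} and sub-Lagrangian reduction. Your surjectivity argument is a correct, self-contained version of that sketch, trading the single appeal to QSS's theorem for an elementary induction on \(\dim_F M\).

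Your primary injectivity argument, however, has a genuine gap: the category \(\overline{\Rep(G)}\) you describe is not well-defined, and no reasonable interpretation of it yields a semisimple category. The morphisms factoring through a module of strictly smaller length are not closed under addition, so they do not form an ideal of the additive category; if one passes to the ideal they generate, then for any two non-isomorphic simples \(S,S'\) the idempotents \(e_S,e_{S'}\in\End_G(S\oplus S')\) each factor through a simple and sum to the identity, forcing \(S\oplus S'\cong 0\) in the quotient. More fundamentally, there is no duality-preserving functor from \(\Rep(G)\) to a semisimple abelian category recording Jordan--H\"older multiplicities---Jordan--H\"older is additive on \(\K\) but not functorial on morphisms---so the map \(\rho\) you want cannot be constructed this way. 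Your alternative route via ``\(S\)-multiplicity forms'' is the correct one: it is precisely the Hermitian d\'evissage carried out in \cite{QSS}, and as you rightly flag, the non-formal content is verifying that the form extracted by iterated sub-Lagrangian reduction is independent of the choices made along the way.
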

The theorem may require a few explanations.
The \(\GW(F)\)-module structure on \(\GWRep{G}\) is induced by the tensor product on \(\Rep{G}\) and the identification of the subcategory of trivial \(G\)-modules with the category of finite-dimensional vector spaces.  On the right-hand side, we can consider each copy of \(\GW(F)\) as a module over itself, with a canonical generator given by the trivial symmetry on \(F\).  The free abelian group \(\Z\) can be viewed as a \(\GW(F)\)-module via the rank homomorphism \(\GW(F)\to\Z\). As such, it is of course generated by \(1\in\Z\).
We can thus define a morphism of \(\GW(F)\)-modules
\[
\GWRep{G} \xleftarrow{\alpha} \bigoplus_{S\in\Sigma_+}\GW(F) \oplus \bigoplus_{S\in\Sigma_-}\Z \oplus \bigoplus_{S\in\Sigma_0} \Z
\]
that sends the canonical generator of the copy of \(\GW(F)\) corresponding to \(S\in\Sigma_+\) to \((S,\sigma_S)\) and the generator of the copy of \(\Z\) corresponding to \(S\in\Sigma_-\cup\Sigma_0\) to \(H(S)\).  The theorem says that this morphism is an isomorphism.

The inverse to \(\alpha\) can be described as follows.
A semi-simple \(G\)-module \(M\) can be decomposed into its \(S\)-isotypical summands \(M_S\).
A symmetry \(\mu\) on \(M\) necessarily decomposes into an orthogonal sum of its restrictions to \(M_S\) for each \(S\in\Sigma_+\) and its restrictions to  \(M_S\oplus M_{S^\vee}\) for each \(S\in\Sigma_-\cup\Sigma_0\). In fact, we can always find an isometry
\begin{equation}\label{eq:semisimple-decomp}
(M,\mu) \cong \bigoplus_{S\in\Sigma_+} \phi_S \cdot (S,\sigma_S) \oplus \bigoplus_{S\in\Sigma_-} n_S\cdot H(S) \oplus \bigoplus_{S\in\Sigma_0} m_S\cdot H(S)
\end{equation}
for certain symmetric forms \(\phi_S\) over \(F\) and non-negative integers \(n_S\) and \(m_S\).

In general, any symmetric \(G\)-module \((M,\mu)\) contains an isotropic \(G\)-submodule \mbox{\(N\subset M\)} such that \(N^\perp/N\) is semi-simple \cite{QSS}*{Theorem~6.10}. By \Cref{sublagrangian-reduction}, we then have
\[
(M,\mu) = (N^\perp/N, \bar\mu) + H(N) \quad\quad \text{ in }\GWRep{G}.
\]
The first summand can be decomposed as in \eqref{eq:semisimple-decomp}, and a decomposition of the second summand can be obtained from the decomposition of \(N\) into its simple factors in \(\KRep{G}\).
Thus, even for general \((M,\mu)\), in \(\GWRep{G}\) we have a decomposition of the form
\begin{equation}\label{eq:semisimple-decomp-GW}
(M,\mu) = \sum_{S\in\Sigma_+} \phi_S \cdot (S,\sigma_S) + \sum_{S\in\Sigma_-} n_S\cdot H(S) + \sum_{S\in\Sigma_0} m_S\cdot H(S)
\end{equation}
for certain symmetric forms \(\phi_S\) over \(F\) and non-negative integers \(n_S\) and \(m_S\).
In particular, \((M,\mu)\) decomposes into a sum, not a difference.

\begin{rem}[\cf \cite{CalmesHornbostel:reductive}*{Remark~1.15}]\label{symmetric-JH-decomp}
We can determine which summands in \eqref{eq:semisimple-decomp-GW} have non-zero coefficients from the decomposition of \(M\) in \(\KRep{G}\).
Indeed, the forgetful map \(\GW\to\K\) is compatible with the decompositions of \(\GWRep{G}\) and \(\KRep{G}\).  On the summand corresponding to \(S\in\Sigma_0\), it can be identified with the diagonal embedding \(\Z\hookrightarrow \Z\oplus\Z\), on the summand corresponding to \(S\in\Sigma_-\)  with multiplication by two \(\Z\hookrightarrow \Z\), and on the summand corresponding to \(S\in\Sigma_+\) with the rank homomorphism \(\GW(F)\to\Z\).  This last map is of course not generally injective,
but it does have the property that no non-zero symmetric form is sent to zero.
\end{rem}
We will use this observation to analyse the restriction \(\GWRep{\SO{m}}\to\GWRep{\OO{m}}\) in the proof of \Cref{thm:O-special}.

\section{The pre-\safelambda-structure}\label{sec:lambdaRings}
In this section we show that the exterior power operations define a pre-\(\lambda\)-structure on the symmetric representation rings \(\GWRep{G}\). We quickly recall the relevant definition from \cite{SGA6}*{Exposé~V, Définition~2.1}.

Given any commutative unital ring \(R\), we write \(\IPS{R} := (1 + tR[[t]])^\times\) for the multiplicative group of invertible power series over \(R\) with constant coefficient \(1\).
A \define{pre-\(\lambda\)-structure} on \(R\) is a collection of maps
\(\lambda^i\colon R\to R\), one for each \(i\in\mathbb{N}_0\),
such that \(\lambda^0\) is the constant map with value \(1\), \(\lambda^1\) is the identity, and the induced map
\[
\begin{aligned}
\lambda_t \colon R &\to \IPS{R}\\
r &\mapsto \textstyle\sum_{i\geq 0} \lambda^i(r)t^i
\end{aligned}
\]
is a group homomorphism.
A \define{pre-\(\lambda\)-ring} is a pair \((R,\lambda^\bullet)\) consisting of a ring \(R\) and a fixed such structure.
A morphism of pre-\(\lambda\)-rings \((R,\lambda^\bullet)\to(R',\lambda'^\bullet)\) is a ring homomorphisms that commutes with the maps \(\lambda^i\).
Following the terminology of Berthelot in loc.\ cit., we sometimes refer to such a morphism as a \define{\(\lambda\)-homomorphism} regardless of whether source and target are pre-\(\lambda\)-rings or in fact \(\lambda\)-rings (see \Cref{sec:lambda-rings}).

\begin{prop}\label{prop:lambda-on-GW}
  Let \(G\) be an affine algebraic group scheme over a field of characteristic not two.
  Then the exterior power operations  \( \lambda^k\colon (M, \mu) \mapsto (\Lambda^k M, \Lambda^k \mu) \) induce well-defined maps on \(\GWRep{G}\) which provide \(\GWRep{G}\) with the structure of a pre-\(\lambda\)-ring.
\end{prop}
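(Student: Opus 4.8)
The plan is to package the exterior powers into a single map
\[
\lambda_t\colon\{\text{symmetric }G\text{-modules}\}\to\IPS{\GW(G)},\qquad (M,\mu)\mapsto\textstyle\sum_{i\ge0}(\Lambda^iM,\Lambda^i\mu)\,t^i,
\]
and to show that it descends to a group homomorphism $\lambda_t\colon\GW(G)\to\IPS{\GW(G)}$; the pre-$\lambda$-axioms ($\lambda^0=1$, $\lambda^1=\id$, and the homomorphism property of $\lambda_t$) are then immediate from $\Lambda^0(M,\mu)=1$, $\Lambda^1(M,\mu)=(M,\mu)$ and the construction. For the descent I need three things: (a) $\lambda_t$ is constant on isometry classes — clear, since an isometry $(M,\mu)\cong(M',\mu')$ induces isometries on all exterior powers; (b) the sum formula $\lambda_t\bigl((M,\mu)\perp(N,\nu)\bigr)=\lambda_t(M,\mu)\cdot\lambda_t(N,\nu)$, which makes $\lambda_t$ a homomorphism of monoids and hence lets it extend to the group completion of the monoid of symmetric $G$-modules under $\perp$; and (c) $\lambda_t(M,\mu)=\lambda_t(H(L))$ for every metabolic $(M,\mu)$ with Lagrangian $L$, so that $\lambda_t$ annihilates the remaining defining relations of $\GW(G)$. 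For (b) I would check that the canonical $G$-module isomorphism $\Lambda^k(M\oplus N)\cong\bigoplus_{i+j=k}\Lambda^iM\otimes\Lambda^jN$ is an isometry for the forms $\Lambda^k(\mu\oplus\nu)$ and $\bigoplus_{i+j=k}(\Lambda^i\mu)\otimes(\Lambda^j\nu)$; this is a routine computation that reduces, after a choice of bases, to block multiplicativity of the determinant.

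The heart of the matter, and the step I expect to be the main obstacle, is (c); here the tool is \Cref{sublagrangian-reduction}. Let $(M,\mu)$ be metabolic with Lagrangian $L$, i.e.\ $L=L^\perp$, and observe that $\mu$ then restricts to an isomorphism $L\xrightarrow{\,\sim\,}(M/L)^\vee$. The exterior power $\Lambda^kM$ carries the $G$-stable filtration $F^p\Lambda^kM:=\im\bigl(\Lambda^pL\otimes\Lambda^{k-p}M\to\Lambda^kM\bigr)$ with $\gr^p\Lambda^kM\cong\Lambda^pL\otimes\Lambda^{k-p}(M/L)$. Since $\mu$ vanishes on $L$, the matrix $\bigl(\mu(x_a)(y_b)\bigr)$ computing $\Lambda^k\mu(x)(y)$ for decomposable $x\in F^p$, $y\in F^q$ has a zero block of size $p\times q$, so its determinant vanishes once $p+q>k$; thus $F^{k-p+1}\subseteq(F^p)^\perp$, and a Vandermonde count of dimensions ($\dim\gr^p=\binom{d}{p}\binom{d}{k-p}$ with $d=\tfrac12\dim M$) promotes this to the equality $(F^p)^\perp=F^{k-p+1}$. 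Hence $N_0:=F^{\lfloor k/2\rfloor+1}\Lambda^kM$ is a sub-Lagrangian with $N_0^\perp=F^{\lceil k/2\rceil}\Lambda^kM$, and \Cref{sublagrangian-reduction} gives
\[
\lambda^k(M,\mu)=\bigl(N_0^\perp/N_0,\overline{\Lambda^k\mu}\bigr)+H(N_0)\qquad\text{in }\GW(G).
\]

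It then remains to see that the right-hand side depends only on the $G$-module $L$, not on the metabolic extension $M$. Using $M/L\cong L^\vee$, the class of $N_0$ in $\K(G)$ is $\sum_{p>\lfloor k/2\rfloor}[\Lambda^pL]\cdot[\Lambda^{k-p}L^\vee]$, so $H(N_0)$ depends only on $[L]\in\K(G)$; and $N_0^\perp/N_0$ is $0$ when $k$ is odd, whereas for even $k$ it equals $\gr^{k/2}\Lambda^kM=\Lambda^{k/2}L\otimes\Lambda^{k/2}(M/L)$ with the form induced by $\Lambda^k\mu$, which the isomorphism $M/L\cong L^\vee$ carries isometrically onto the corresponding middle summand of $\Lambda^kH(L)$. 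Running the identical computation for $H(L)$ — whose Lagrangian is again $L$ — produces the same right-hand side, so $\lambda^k(M,\mu)=\lambda^k(H(L))$ for all $k$, which is (c). Granting (a)--(c), $\lambda_t$ descends to the desired group homomorphism on $\GW(G)$, and the pre-$\lambda$-ring structure follows. The only genuinely delicate points are the dimension count establishing $(F^p)^\perp=F^{k-p+1}$ and the isometric identification of the even-degree subquotient $N_0^\perp/N_0$; everything else runs parallel to the classical argument for $\K(G)$.
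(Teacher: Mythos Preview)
Your proposal is correct and follows the same overall strategy as the paper: package the exterior powers into \(\lambda_t\), verify additivity on orthogonal sums, and for the metabolic relation use the filtration \(F^p\Lambda^kM\) together with sub-Lagrangian reduction, treating odd and even \(k\) separately.

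The one noteworthy difference is in how the even-degree middle subquotient is handled. You argue directly that the canonical \(G\)-isomorphism \(M/L\cong L^\vee\) induced by \(\mu\) carries the induced form on \(\gr^{k/2}\Lambda^kM\) to the corresponding form for \(H(L)\); this is true, but you leave the actual verification implicit (and rightly flag it as delicate). The paper instead chooses a \emph{vector-space} isometry \(\phi\colon (M,\mu)\to H(L)\) compatible with the short exact sequence \(0\to L\to M\to L^\vee\to 0\) (obtained by correcting an arbitrary splitting to an isometric one), and then invokes a Filtration Lemma stating that any such \(\phi\) induces \(G\)-module isomorphisms on the graded pieces. Since \(\Lambda^k\phi\) is an isometry, the induced map on \(\gr^{k/2}\) is automatically a \(G\)-equivariant isometry. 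This device cleanly absorbs both your ``delicate points'' (the equality \((F^p)^\perp=F^{k-p+1}\) follows from the split case via \(\phi\), and the isometric identification of the middle quotient is immediate), at the cost of formulating and proving the Filtration Lemma. Your direct computation is equally valid and arguably more self-contained.
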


We divide the proof into several steps, of which only the last differs somewhat from the construction of the \(\lambda\)-operations on \(\KRep{G}\).

\paragraph{Step 1.} We check that \(\lambda^i(M, \mu) := (\Lambda^i M, \Lambda^i\mu)\) is well-defined on the set of isometry classes of \(G\)-modules, so that we have an induced map
\[
\lambda_t\colon \left\{\ctext{2cm}{isometry classes of \(G\)-modules}\right\} \to \IPS{\GWRep{G}}.
\]
\paragraph{Step 2.} We check that \(\lambda_t\) is additive in the sense that
\[
\lambda_t((M, \mu)\perp(N,\nu)) = \lambda_t(M, \mu)\lambda_t(N,\nu).
\]
Then we extend \(\lambda_t\) linearly to obtain a group homomorphism
\[
\lambda_t\colon \bigoplus\Z (M, \mu) \to \IPS{\GWRep{G}},
\]
where the sum on the left is over all isometry classes of \(G\)-modules. By the additivity property, this extension factors through the quotient of \(\bigoplus\Z (M, \mu)\) by the ideal generated by the relations
\[
((M, \mu)\perp(N,\nu)) = (M, \mu) + (N,\nu).
\]
\paragraph{Step 3.} Finally, in order to obtain a factorization
\[
\lambda_t\colon \GWRep{G}\to \IPS{\GWRep{G}},
\]
we check that \(\lambda_t\) respects the relation \( (M,\mu) = H(L)\) for every metabolic \(G\)-bundle \( (M,\mu) \) with Lagrangian \(  L\).
For this step, we need the following refinement of the usual lemma used in the context of \(\K\)-theory (see for example \cite{SGA6}*{Exposé~V, Lemme~2.2.1}).
\begin{lemFiltration}\label{lemFiltration}
  Let \(0\to L\to M\to N\to 0\) be an extension of \(G\)-modules. Then we can find a filtration of \(\Lambda^n M\) by \(G\)-submodules
  \(
  \Lambda^n M = M^0 \supset M^1 \supset M^2 \supset \cdots
  \)
  together with isomorphisms of \(G\)-modules
  \begin{equation}\label{eq:lemFiltration}
  \factor{M^i}{M^{i+1}} \cong \Lambda^i L \otimes \Lambda^{n-i} N.
  \end{equation}
  More precisely, there is a unique choice of such filtrations and isomorphisms subject to the following conditions:
  \begin{compactenum}[(1)]
    \item\label{i:filt-1}
      The filtration is natural with respect to vector space isomorphisms of extensions. That is, given two extensions \(M\) and \(\tilde M\) of \(G\)-modules of \(L\) by \(N\), any vector space isomorphism \(\phi\colon M\to \tilde M\) for which
      \[\xymatrix{
        0 \ar[r] & L \ar[r]\ar@{=}[d] & M          \ar[r]\ar[d]^{\phi}_{\cong} & N \ar[r]\ar@{=}[d] & 0 \\
        0 \ar[r] & L \ar[r]           & {\tilde M} \ar[r]                    & N \ar[r]           & 0
      }\]
      commutes restricts to vector space isomorphisms \(M^i \to \tilde M^i\) compatible with \eqref{eq:lemFiltration} in the sense that
      \[\xymatrix{
        {\factor{M^i}{M^{i+1}}} \ar[rd]_{\cong}\ar[rr]_{\cong}^{\bar \phi} && \ar[ld]^{\cong} {\factor{\tilde M^i}{\tilde M^{i+1}}} \\
        & {\Lambda^i L\otimes \Lambda^{n-i} N}
      }\]
      commutes. In particular, the induced isomorphisms \(\bar\phi\) on the quotients are isomorphisms of \(G\)-modules.

    \item\label{i:filt-2}
      For the trivial extension, \((L\oplus N)^i  \subset \Lambda^n(L\oplus N)\) corresponds to the submodule
      \[
      \textstyle\bigoplus_{j\geq i} \Lambda^j L\otimes \Lambda^{n-j} N \quad \subset \quad \textstyle \bigoplus_j \Lambda^j L\otimes \Lambda^{n-j} N
      \]
      under the canonical isomorphism \(\Lambda^n(L\oplus N) \cong \bigoplus_j \Lambda^j L\otimes \Lambda^{n-j} N\), and the iso\-morphisms
      \[
      \factor{(L\oplus N)^i}{(L\oplus N)^{i+1}} \xrightarrow{\cong} \Lambda^i L\otimes \Lambda^{n-i} N
      \]
      correspond to the canonical projections.
  \end{compactenum}
\end{lemFiltration}
\begin{proof}[Proof of \cref{lemFiltration}]
Uniqueness is clear: if filtrations and isomorphisms satisfying the above conditions exist, they are determined on all split extensions by {\it (2)} and hence on arbitrary extensions by {\it (1)}.

Existence may be proved via the following direct construction.  Let \(0\to L\xrightarrow{\iota} M \xrightarrow{p} N \to 0\) be an arbitrary short exact sequence of \(G\)-modules.  Consider the \(G\)-morphism \(\Lambda^i L\otimes \Lambda^{n-i} M \to \Lambda^n M\) induced by \(\iota\).  Let \(M_i\) be its kernel and \(M^i\) its image, so that we have a short exact sequence of \(G\)-modules
\begin{equation}\label{eq:lemFiltration:proof}
\tag{$\ast$}
\xymatrix{
  0  \ar[r] &  {M_i} \ar[r] & {\Lambda^i L\otimes \Lambda^{n-i} M} \ar[r] & {M^i} \ar[r] & 0
}
\end{equation}
We claim that the images \(M^i\) define the desired filtration of \(M\).

Indeed, they define the desired filtration in the case \(M = L\oplus N\), and an isomorphism \(\phi \colon M \to \tilde M\) as in {\it (1)} induces (vector space) isomorphisms on each term of the corresponding exact sequences \eqref{eq:lemFiltration:proof}.  Moreover, the induced isomorphism on the central terms of these exact sequences is compatible with the projection to \(\Lambda^i L\otimes \Lambda^{n-i} N\).  The  situation is summarized by the following commutative diagram:
\[\xymatrix{
    0 \ar[r]
  & {M_i} \ar[r] \ar[d]^{\cong}
  & {\Lambda^i L\otimes \Lambda^{n-i} M} \ar[r] \ar[d]^{\cong} \ar@/_4ex/@{->>}[dd]|*+<2ex>{\hole}
  & {M^i} \ar[r] \ar[d]^{\cong} \ar@/^/@{..>}[ldd]
  & 0 \\
  0  \ar[r]
  & {\tilde M_i} \ar[r]
  & {\Lambda^i L\otimes \Lambda^{n-i} \tilde M} \ar[r]\ar@{->>}[d]
  & {\tilde M^i} \ar[r]\ar@{..>}[ld]
  & 0 \\
  &
  & {\Lambda^i L\otimes \Lambda^{n-i} N}
}
\]
We claim that the projection to \(\Lambda^i L\otimes \Lambda^{n-i} N\) factors through \(M^i\), as indicated by the dotted arrows.  This can easily be checked in the case of the trivial extension \(L\oplus N\).  In general, we may pick a vector space isomorphism \(\phi\colon M\to L\oplus N\) as in {\it (1)}.  Then the claim follows from the above diagram with \(L\oplus N\) in place of \(\tilde M\).  The same method shows that the induced morphisms \(M^i\to \Lambda^i L\otimes \Lambda^{n-i} N\) induce isomorphisms
\[
\factor{M^i}{M^{i+1}}\xrightarrow{\cong} \Lambda^i L\otimes \Lambda^{n-i} N.
\]
Note that while we use vector-space level arguments to verify that they are isomorphisms, they are, by construction, morphisms of \(G\)-modules.
\end{proof}

\begin{proof}[Proof of \Cref{prop:lambda-on-GW}, Step 1]
The exterior power operation
\[
\begin{aligned}
  \Lambda^i\colon &\Rep{G} \to \Rep{G}
\end{aligned}
\]
is a duality functor in the sense that we have a natural isomorphism \(\eta\) identifying
\(\Lambda^i(M^\vee)\) and \((\Lambda^i M)^\vee\) for each \(G\)-module \(M\).
Indeed,  we have natural isomorphisms of vector spaces
\[
\begin{aligned}
   \eta_M\colon \quad\Lambda^i(M^\vee)\quad &\xrightarrow{\cong} \quad \big(\Lambda^i M\big)^\vee\\
   \phi_1\wedge\dots\wedge\phi_i &\mapsto \big(m_1\wedge\dots\wedge m_i \mapsto \det(\phi_\alpha(m_\beta))\big)
\end{aligned}
\]
\citelist{\cite{Eisenbud}*{Prop.~A.2.7}\cite{Bourbaki:Algebre}*{Ch.~3, \S\,11.5, (30 bis)}}. These isomorphisms are equivariant with respect to the \(G\)-module structures induced on both sides by a \(G\)-module structure on \(M\).
We therefore obtain a well-defined operation on the set of isometry classes of symmetric \(G\)-modules by defining
\[
\lambda^i(M,\mu) := (\Lambda^i M, \eta_M\circ \Lambda^i(\mu)).
\]
Note however that the functor \(\Lambda^i\) is not additive or even exact, so it does not induce a homomorphism \(\GWRep{G}\to\GWRep{G}\).

\proof[Proof of \Cref{prop:lambda-on-GW}, Step 2]
In order to verify the claimed additivity property of \(\lambda_t\),  we need to check that, for any pair of symmetric \(G\)-modules \((M,\mu)\) and \((N,\nu)\), the natural isomorphism
\[
  \Lambda^n(M\oplus N) \xleftarrow{\cong} \bigoplus_{i}\Lambda^i M \otimes \Lambda^{n-i} N
\]
defines an \emph{isometry}
\[
  \lambda^n((M,\mu)\perp (N,\nu)) \xleftarrow{\cong} \bigoplus_{i}\lambda^i(M,\mu) \otimes \lambda^{n-i} (N,\nu).
\]
Denoting the \(i^{\text{th}}\) component of this natural isomorphism by \(\Phi_i\), the claim boils down to the commutativity of the following diagrams (one for each i), which can be checked by a direct computation.
\[
\xymatrix{
  {\Lambda^n(M^\vee\oplus N^\vee)} \ar[dd]_{\eta_{N\oplus M}} &
  \ar[l]_-{\Phi_i} {\Lambda^i (M^\vee)\otimes\Lambda^{n-i} (N^\vee)} \ar[d]^{\eta_M\otimes\eta_N}
  \\
  &
  {(\Lambda^i M)^\vee \otimes (\Lambda^{n-i} N)^\vee} \ar[d]^{\cong}
  \\
  {(\Lambda^n(M\oplus N))^\vee} \ar[r]_-{\Phi_i^\vee} &
  {(\Lambda^i M\otimes\Lambda^{n-i} N)^\vee}
}
\]

\proof[Proof of \Cref{prop:lambda-on-GW}, Step 3]
  Let \( (M,\mu) \) be metabolic with Lagrangian \(L\), so that we have a short exact sequence
  \begin{equation}\label{eq:lambda-on-GW:ses}
  0 \to L \xrightarrow{i} M \xrightarrow{i^\dual\mu} L^\dual \to 0.
  \end{equation}
  We need to show that \(\lambda^n (M, \mu) = \lambda^n H(L) \) in \(\GWRep{G}\).

  On the level of vector spaces, the exact sequence \eqref{eq:lambda-on-GW:ses} necessarily splits. In fact, we can find an \emph{isometry} of vector spaces \(\phi\colon (M,\mu) \to H(L)\) such that the diagram
  \[\xymatrix{
    0 \ar[r] & L \ar[r]\ar@{=}[d] & M          \ar[r]\ar[d]^{\phi}_{\cong} & {L^\vee} \ar[r]\ar@{=}[d] & 0 \\
    0 \ar[r] & L \ar[r]           & {H(L)}     \ar[r]                    & L^\vee \ar[r]           & 0
  }\]
  of \cref{lemFiltration}, {\it (1)} commutes.  For example, given any splitting \(s\) of \(i^\vee\mu\), let \(\tilde s\) be the alternative splitting \(\tilde s := s - \tfrac{1}{2}is^\vee\mu s\) and define \(\phi\) to be the inverse of \((i,\tilde s)\).
 We then have filtrations  \(M^\bullet\) and \(H(L)^\bullet\) of \(\Lambda^n M\) and \(\Lambda^n H(L)\) such that the isometry \(\Lambda^n\phi\) restricts to isomorphisms \(M^i\cong H(L)^i\) and induces isomorphisms of \(G\)-modules
  \[
  \factor{M^i}{M^{i+1}} \xrightarrow[\cong]{\bar\phi} \factor{H(L)^i}{H(L)^{i+1}}.
  \]

If \(n\) is odd, say \(n=2k-1\), then \(H(L)^k\) is a Lagrangian of \(\lambda^n H(L)\) and hence \(M^k\) is a Lagrangian of \(\lambda^n (M,\mu)\).
Therefore, in \(\GWRep{G}\) we have:
  \[
\begin{aligned}
    \lambda^n (M,\mu) &= H(M^k)  \\
    \lambda^n H(L) &= H(H(L)^k)
  \end{aligned}
\]
On the other hand, \(M^k = H(L)^k\) in  \(\KRep{G}\), since these two \(G\)-modules have filtrations with isomorphic quotients. So the right-hand sides of the above two equations agree, and the desired equality \(\lambda^n (M,\mu) = \lambda^n H(L)\) in \(\GWRep{G}\) follows.

If \(n\) is even, say \(n=2k\), then \(H(L)^{k+1}\) is a sub-Lagrangian of \(\lambda^n H(L)\), and \((H(L)^{k+1})^\perp = H(L)^k\).
Again, it follows from the fact that \(\phi\) is an isometry that likewise \(M^{k+1}\) is an admissible sub-Lagrangian of \(\lambda^n (M,\mu)\), and that \((M^{k+1})^\perp = M^k\).
Moreover, \(\phi\) induces an isometry of symmetric \(G\)-modules
\[
\left(\factor{M^k}{M^{k+1}},\bar\mu\right) \cong \left(\factor{H(L)^k}{H(L)^{k+1}},\bar{\mm{0&1\\1&0}}\right)
\]
The desired identity in \(\GWRep{G}\) follows:
\begin{align*}
  \lambda^{2k}(M,\mu)
  &= H(M^k) + (M^k/M^{k+1},\bar\mu)
  \quad\quad\text{(by \Cref{sublagrangian-reduction})} \\
  &= H(H(L)^k) + (H(L)^k/H(L)^{k+1},\bar{\mm{0&1\\1&0}})\\
  &= \lambda^{2k}H(L) \qedhere
\end{align*}
\end{proof}

Note that, by construction, \(\lambda^0=1\) (constant), \(\lambda^1=\id\), and \(\lambda_t\) is a ring homomorphism.  Thus \(\GWRep{G}\) is indeed a pre-\(\lambda\)-ring.
We observe a few additional structural properties.

\begin{defn}
  An \define{augmentation} of a pre-\(\lambda\)-ring \(R\) is a \(\lambda\)-homomorphism
  \[
  d\colon R\to \Z,
  \]
  where the pre-\(\lambda\)-structure on \(\Z\) is defined by \(\lambda^i(n) := \binom{n}{i}\).

  A \define{positive structure} on a pre-\(\lambda\)-ring \(R\) with augmentation \(d\) is a subset \(R_{>0}\subset R\) satisfying the axioms below.\footnote{Definitions in the literature vary.  The last of the axioms we require here, introduced by Grinberg in \cite{Grinberg}, appears to be missing in both \cite{FultonLang}*{\S\,I.1} and \cite{Weibel:K}*{Def.~II.4.2.1.}.  Without it, it is not clear that the set of line elements forms a subgroup of the group of units of \(R\). }
 Elements of \(R_{>0}\) are referred to as \define{positive elements}; a \define{line element} is a positive element \(l\) with \(d(l) = 1\).  The axioms are as follows:
\begin{align*}
  &\left.\begin{aligned}
  &\bullet\; \text{\parbox[t]{0.9\textwidth}{\(R_{\geq 0} := R_{>0} \cup \{0\}\) is closed under addition, under multiplication and under the \(\lambda\)-operations.}}\\
  &\bullet\; \text{Every element of \(R\) can be written as a difference of positive elements.}\\
  &\bullet\; \text{The element \(n\cdot 1_R\) is positive for every \(n\in \Z_{>0}\).}
  \end{aligned}
  \right.\\
  &\left.
    \begin{aligned}
      &\bullet\; d(x) > 0 \\
      &\bullet\; \lambda^{d(x)}(x) \text{ is a unit in } R \\
      &\bullet\; \lambda^i x = 0 \text{ for all } i>d(x)
    \end{aligned}
  \;\right\} \text{ for all positive elements \(x\)}\\
  &\left.\begin{aligned}
  &\bullet\; \text{\parbox[t]{0.9\textwidth}{The multiplicative inverse of a line element is a positive element (and hence again a line element).}}
  \end{aligned}
  \right.
  \end{align*}
\end{defn}
On \(\GWRep{G}\), we can define a positive structure by taking \(d(M,\mu) := \dim(M)\) and letting \(\GWRep{G}_{>0}\subset \GWRep{G}\) be the image of the set of isometry classes of \(G\)-modules in \(\GWRep{G}\). Then the line elements are the classes of symmetric characters of \(G\).

\begin{defn}\label{defn:line-special}
A pre-\(\lambda\)-ring \(R\) with a positive structure is \define{line-special} if
\[
   \lambda^k(l\cdot x) = l^k\lambda^k(x)
\]
for all line elements \(l\), all elements \(x\in R\) and all positive integers \(k\).
\end{defn}

\begin{lem}\label{lem:line-special}
The symmetric representation ring \(\GWRep{G}\) of an affine algebraic group scheme is line-special.
\end{lem}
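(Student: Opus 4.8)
The plan is to reduce the verification of the identity $\lambda^k(l\cdot x) = l^k\lambda^k(x)$ to the level of symmetric $G$-modules, where it becomes a statement about exterior powers of tensor products. First I would note that, by the description of the positive structure on $\GW(G)$, every line element $l$ is the class of a one-dimensional symmetric representation $(L,\ell)$, i.e.\ a symmetric character; in particular $L$ is an invertible $G$-module with $L\otimes L\cong F$ canonically via $\ell$. Next, since both sides of the desired identity are additive in $x$ in the appropriate sense — more precisely, $x\mapsto \lambda^k(l\cdot x)$ and $x\mapsto l^k\lambda^k(x)$ extend to the same kind of ``$\lambda_t$-twisted'' homomorphism — it suffices to check the identity when $x = (M,\mu)$ is the class of an actual symmetric $G$-module. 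Here one uses that multiplication by the fixed line element $l$ is a ring automorphism of $\GW(G)$ (its inverse being multiplication by $l^{-1}$, which is again a line element by the last positivity axiom), so it is compatible with the relations defining $\GW(G)$, and that $\lambda_t$ is a group homomorphism $\GW(G)\to\IPS{\GW(G)}$.

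The core computation is then the classical vector-space isomorphism $\Lambda^k(L\otimes M)\cong L^{\otimes k}\otimes \Lambda^k M$, valid because $L$ is a line bundle (one-dimensional). I would check that this isomorphism is $G$-equivariant, and then — this is the only point requiring the symmetric structure — that under the duality isomorphisms $\eta$ it carries the symmetry $\eta_{L\otimes M}\circ\Lambda^k(\ell\otimes\mu)$ on $\Lambda^k(L\otimes M)$ to the symmetry $\ell^{\otimes k}\otimes(\eta_M\circ\Lambda^k\mu)$ on $L^{\otimes k}\otimes\Lambda^k M$, i.e.\ that the isomorphism is an \emph{isometry} of symmetric $G$-modules. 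This is a diagram chase with the natural isomorphisms $\Lambda^i(N^\vee)\cong(\Lambda^i N)^\vee$ and $(L\otimes M)^\vee\cong L^\vee\otimes M^\vee$, entirely analogous to the compatibility diagrams already verified in Step~2 of the proof of \Cref{prop:lambda-on-GW}; the extra ingredient is only the behaviour of these isomorphisms on a one-dimensional factor, where $\Lambda^k$ of a line is a line and everything is governed by the $k$-th power.

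I expect the main obstacle to be purely bookkeeping: keeping track of the various canonical identifications (the line element acting on the left, the $k$-fold tensor power $L^{\otimes k}$, the double-dual identifications built into the definition of ``symmetric'') so that the isometry claim is genuinely an equality of symmetries and not merely an equality up to sign or up to an automorphism of $L^{\otimes k}$. Once the isometry $\lambda^k\big((L,\ell)\otimes(M,\mu)\big)\cong (L,\ell)^{\otimes k}\otimes\lambda^k(M,\mu)$ is established at the level of symmetric $G$-modules, passing to classes in $\GW(G)$ and invoking that the identity on module classes propagates to all of $\GW(G)$ by the additivity of $\lambda_t$ and the fact that every class is a difference of module classes finishes the proof.
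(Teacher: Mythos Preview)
Your approach is correct and matches the paper's proof essentially verbatim: reduce to positive elements via the additivity of \(\lambda_t\), then verify that the canonical isomorphism \(\Lambda^k(L\otimes M)\cong L^{\otimes k}\otimes\Lambda^k M\) is a \(G\)-equivariant isometry of symmetric modules. One small slip worth fixing: multiplication by a line element \(l\) is a group (equivalently \(\GW(G)\)-module) automorphism of \(\GW(G)\), not a ring automorphism---it neither preserves \(1\) nor products---but the group-homomorphism property is all your reduction actually uses.
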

\begin{proof}
It suffices to check this property on a set of additive generators of the \(\lambda\)-ring, for example on all positive elements.
Thus, it suffices to check that for any one-dimensional symmetric representation \((O,\omega)\) and any symmetric representation \((M,\mu)\), the canonical isomorphism \(O^{\otimes k}\otimes\Lambda^k M \cong \Lambda^k(O\otimes M)\) extends to an isometry \((O,\omega)^{\otimes k}\otimes\lambda^k(M,\mu)\cong \lambda^k((O,\omega)\otimes (M,\mu))\).
\end{proof}

\section{The pre-\safelambda-structure is a \safelambda-structure}\label{sec:structure-is-special}
Having established a pre-\(\lambda\)-structure on \(\GWRep{G}\), our aim is to show that it is in fact a \(\lambda\)-structure. We briefly recall the definition and some general facts before focusing on \(\GWRep{G}\) from \Cref{sec:reduction} onwards.

\subsection{\safelambda-rings}\label{sec:lambda-rings}
A pre-\(\lambda\)-ring \(R\) is a \define{\(\lambda\)-ring} if the group homomorphism
\[
\lambda_t\colon R \to \IPS{R}
\]
is in fact a \(\lambda\)-homomorphism, for a certain universal pre-\(\lambda\)-ring structure on \(\IPS{R}\) \cite{SGA6}*{Exposé~V, Définition~2.4.1\footnote{There are four different choices of multiplication on \(\IPS{R}\) that yield isomorphic ring structures, with respective multiplicative units of the form \((1\pm t)^{\pm 1}\).  We stick to loc.\ cit.\ and use the multiplication whose unit is \(1+t\).}}.
This property can be encoded by certain universal polynomials
\[
\begin{aligned}
  P_k &\in \Z[x_1,\dots,x_k,y_1,\dots,y_k] \\
  P_{k,j} &\in \Z[x_1,\dots,x_{kj}]
\end{aligned}
\]
as follows:  a pre-\(\lambda\)-ring \(R\) is a \(\lambda\)-ring if and only if \(\lambda_t(1) = 1 + t\) and\footnote{For a pre-\(\lambda\)-ring with a positive structure, \(\lambda_t(1) = 1+t\) is automatically satisfied.}
\begin{align}
  \label{eq:special-1}\tag{$\lambda 1$}
  \lambda^k(x\cdot y) &= P_k(\lambda^1 x,\dots,\lambda^k x, \lambda^1y,\dots,\lambda^ky )\\
  \label{eq:special-2}\tag{$\lambda 2$}
  \lambda^k(\lambda^j(x)) &= P_{kj}(\lambda^1 x,\dots,\lambda^{kj} x)
\end{align}
for all \(x,y\in R\) and all positive integers \(j,k\).
We refer to the equations \eqref{eq:special-1} and \eqref{eq:special-2} as the first and second \(\lambda\)-identity.  Precise definitions of the polynomials \(P_k\) and \(P_{k,j}\) are given in equations \eqref{eq:universal-polynomials-1} and \eqref{eq:universal-polynomials-2} below. Essentially, the \(\lambda\)-identities say that any element behaves like a sum of line elements.  A morphism of \(\lambda\)-rings is the same as a morphism of the underlying pre-\(\lambda\)-rings, \ie a ring homomorphism that commutes with the \(\lambda\)-operations.  We continue to refer to such morphisms as \define{\(\lambda\)-homomorphisms}.

Let us recall a few general criteria for verifying that a pre-\(\lambda\)-ring \(R\) with a positive structure is a \(\lambda\)-ring.

\begin{description}
\item[Embedding] If we can enlarge \(R\) to a \(\lambda\)-ring, \ie if we can find a \(\lambda\)-ring \(R'\) and a \(\lambda\)-monomorphism \(R\hookrightarrow R'\), then \(R\) itself is a \(\lambda\)-ring.

\item[Splitting] If all positive elements of \(R\) decompose into sums of line elements, then \(R\) is a \(\lambda\)-ring.

\item[Generation] If \(R\) is additively generated by elements satisfying the \(\lambda\)-identities, then \(R\) is a \(\lambda\)-ring.

More generally, if \(R\) is generated by line elements over some set of elements that satisfy the \(\lambda\)-identities, and if \(R\) is line-special, then \(R\) is a \(\lambda\)-ring.
Precise definitions are given below.

\item[Detection] If an element \(x\in R\) lies in the image of a \(\lambda\)-ring \(R'\) under a  \(\lambda\)-morphism \(R'\to R\), then the second \(\lambda\)-identity \eqref{eq:special-2} is satisfied for \(x\). Likewise, if two elements \(x,y\in R\) simultaneously lie in the image of a \(\lambda\)-ring \(R'\to R\), then both \(\lambda\)-identities \eqref{eq:special-1} and \eqref{eq:special-2} are satisfied for \(\{x, y\}\).

This criterion is particularly useful in combination with the generation criterion:  in order to show that a pre-\(\lambda\)-ring is a \(\lambda\)-ring, it suffices to check that each pair of elements from a set of additive generators is contained in the image of some \(\lambda\)-ring.
\end{description}

The embedding and detection criteria are easily verified directly from the definition of a \(\lambda\)-ring in terms of the \(\lambda\)-identities.  The splitting criterion follows from the generation criterion and the first part of \Cref{lambda-identities-for-low-rank-elements} below.  We discuss the generation criterion in some detail:

\begin{lemGeneration}\label{lem:special-via-generators}
  Let \(R\) be a pre-\(\lambda\)-ring with a positive structure, and let \(E\subset R\) be a subset that generates \(R\) as an abelian group (\eg \(E = R_{\geq 0}\)).
  Then \(R\) is a \(\lambda\)-ring if and only if the \(\lambda\)-identities \eqref{eq:special-1} and \eqref{eq:special-2} hold for all elements of \(E\).
\end{lemGeneration}
\begin{proof} 
In general, given any group homomorphism between rings \(l\colon R\to L\) and a subset \(E\subset R\) that generates \(R\) as a an abelian group, \(l\) is a morphism of rings if and only if it maps \(1\) to \(1\) and \(e_1e_2\) to \(l(e_1)l(e_2)\) for all elements \(e_1,e_2\in E\). Likewise, if \(R\) and \(L\) are pre-\(\lambda\)-rings, then \(l\) is a morphism of pre-\(\lambda\)-rings if and only if it maps \(e_1e_2\) to \(l(e_1)l(e_2)\) and \(\lambda^i(e)\) to \(\lambda^i(l(e))\) for all \(e_1,e_2,e\in E\) and all \(i\in \mathbb N\).
The lemma is proved by applying these observations to \(\lambda_t\colon R \to \IPS{R}\).
The assumption that \(R\) has a positive structure is needed only to verify that \(\lambda_t\) sends the multiplicative unit \(1\in R\) to the multiplicative unit \(1+t\in \IPS{R}\).
\end{proof}

Both the generation criterion and the splitting criterion are special cases of the following Line Generation Lemma.
\begin{defn}
  Let \(E\subset R\) be a subset of a pre-\(\lambda\)-ring with a positive structure.
  We say that \(R\) is
\define{generated by line elements over \(E\)} if every element of \(R\) can be written as a finite sum
\[
  \sum l_e\cdot e
\]
for certain elements \(e\in E\) and certain line elements \(l_e\) in \(R\).
\end{defn}

\begin{lemLineGeneration}\label{lem:special-via-line-generators}
Let \(R\) be a pre-\(\lambda\)-ring generated by line elements over some subset \(E\).
If \(R\) is line-special and if the \(\lambda\)-identities hold for all elements of \(E\),
then \(R\) is a \(\lambda\)-ring.
\end{lemLineGeneration}

The proofs of this lemma and the next are the only places where we will need the definitions of the polynomials \(P_k\) and \(P_{k,j}\).  Given a tuple \(\vec x = (x_1,\dots,x_n)\), let \(\lambda_i(\vec x)\) denote the \(i^{\text{th}}\) elementary symmetric polynomial in its entries.
The polynomials \(P_k\) and \(P_{k,j}\) are uniquely determined by the requirement that the following equations be satisfied in \(\Z[x_1,\dots,x_n]\), for all \(n\):
\begin{align}
  \label{eq:universal-polynomials-1}
  \sum_{k\geq 0} P_k(\lambda_1(\vec x),\dots,\lambda_k(\vec x), \lambda_1(\vec y), \dots, \lambda_k(\vec y)) T^k &= \prod_{\mathclap{1\leq i,j \leq n}}(1 + x_i y_j T) \\
  \label{eq:universal-polynomials-2}
  \sum_{k\geq 0} P_{k,j}(\lambda_1(\vec x),\dots,\lambda_{kj}(\vec x))T^k &= \prod_{\mathclap{1\leq i_1<\cdots<i_j\leq n}} (1+x_{i_1}\cdots x_{i_j}T) \end{align}

\begin{proof}[Proof of \cref{lem:special-via-line-generators}]
We claim that the following equations hold in \(\Z[\alpha,x_1,\dots,x_k,\beta,y_1,\dots,y_k]\) and in \(\Z[\alpha,x_1,\dots,x_k]\):
\begin{align}
  \label{eq:Pk-line-special}
  P_k(\alpha x_1,\dots, \alpha^k x_k, \beta y_1, \dots, \beta^k y_k) &= \alpha^k\beta^k P_k(x_1,\dots,x_k,y_1,\dots,y_k)\\
  \label{eq:Pkj-line-special}
  P_{k,j}(\alpha x_1,\alpha^2 x_2 \dots,\alpha^{kj} x_{kj}) &= \alpha^{kj} P_{k,j}(x_1,\dots,x_k)
\end{align}
Indeed, this follows easily from the fact that \(\lambda_i(\alpha\vec x)=\alpha^i\lambda_i(\vec x)\) by comparing the coefficients of \(T^k\) in the defining equations.
Let us now apply \cref{lem:special-via-generators} to the subset
\[
E' := \{ l e \mid e\in E, l \text{ a line element in } R\}.
\]
We check that all elements of \(E'\) satisfy the \(\lambda\)-identities:
for \(e\in E\) and \(l\in R\), we have
\[
\begin{aligned}
\lambda^k(\lambda^j(l e))
&= l^{kj}\lambda^k(\lambda^j e)         && \text{ since } R \text{ is line-special}\\
&= l^{kj}P_{k,j}(e, \lambda^2e, \dots, \lambda^{kj} e) && \text{ by the assumption on } E\\
&= P_{k,j}(l e, l^2\lambda^2e, \dots, l^{kj}\lambda^{kj} e) && \text{ by \cref{eq:Pkj-line-special}}\\
&= P_{k,j}(l e, \lambda^2(l e), \dots, \lambda^{kj}(l e)) && \text{ since } R \text{ is line-special}
\end{aligned}
\]
Similarly, for \(e_1,e_2\in E\) and any line elements \(l_1,l_2\in R\) we have
\begin{align*}
   \lambda^k(l_1 e_1\cdot l_2 e_2) &= P_k(\lambda^1(l_1 e_1), \dots, \lambda^k(l_1 e_1), \lambda^1(l_2 e_2), \dots, \lambda^k(l_2 e_2)).\qedhere
\end{align*}
\end{proof}

\begin{lem}\label{lambda-identities-for-low-rank-elements}
Let \(K\) be a pre-\(\lambda\)-ring with a positive structure.
  \begin{compactenum}[(i)]
  \item \label{lambda-identities-for-rank-one}
    The \(\lambda\)-identities \eqref{eq:special-1} and \eqref{eq:special-2} are satisfied by arbitrary line elements.
  \item \label{lambda-identities-for-rank-two}
    The \(\lambda\)-identities \eqref{eq:special-1} and \eqref{eq:special-2} are satisfied by a pair of positive elements \(x\) and \(y\) both of rank at most two if and only if the identities \eqref{eq:special-1} hold for \(k\in\{2,3,4\}\). Explicitly, for positive \(x\) and \(y\) of rank at most two said identities read as follows:
    \[
    \begin{aligned}
      \lambda^2(xy) &= x^2\cdot \lambda^2y + y^2 \cdot \lambda^2 x - 2 \lambda^2x\lambda^2y\\
      \lambda^3(xy) &= xy\cdot\lambda^2x\cdot\lambda^2 y\\
      \lambda^4(xy) &= (\lambda^2x)^2\cdot(\lambda^2y)^2
    \end{aligned}
    \]
  \item \label{lambda-identities-vs-line-special}
    \(K\) is line-special if and only if the identity \eqref{eq:special-1} is satisfied for any pair of elements \(x,y\in K\)  with \(x\) a line element.
  \end{compactenum}
\end{lem}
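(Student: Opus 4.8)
I would dispatch the three parts in turn, taking the two formal ones (i) and (iii) first and then the more computational part (ii). In each case the assertion gets rewritten as a polynomial identity among the universal polynomials $P_k$ and $P_{k,j}$ which one reads off directly from their defining generating functions \eqref{eq:universal-polynomials-1} and \eqref{eq:universal-polynomials-2} by specialising the number of variables. The recurring observation is: if $x\in K$ is positive then $\lambda^i x=0$ for $i>d(x)$, so $\lambda_t(x)$ is a polynomial of degree $d(x)$; and since the augmentation $d$ is a $\lambda$-homomorphism, $d(\lambda^j x)=\binom{d(x)}{j}$, so the top exterior power $\lambda^{d(x)}x$ is a positive element of rank $1$, i.e.\ a line element, and so is the product of two line elements (its rank being~$1$).

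For part~(i), let $l$ and $m$ be line elements; then $lm$ is a line element, $\lambda^i l=0=\lambda^i(lm)$ for $i\ge 2$, and $\lambda^j l=0$ for $j\ge 2$. So \eqref{eq:special-1} for the pair $(l,m)$ and \eqref{eq:special-2} for $l$ (and symmetrically $m$) reduce to comparing $\lambda_t(lm)=1+lmt$ and $\lambda_t(\lambda^j l)$ with the values of $P_k$ and $P_{k,j}$ at arguments all of whose entries past the first vanish; those values come from the $n=1$ specialisation of \eqref{eq:universal-polynomials-1} and \eqref{eq:universal-polynomials-2}, where the relevant products $\prod_{1\le i,j\le 1}(1+x_iy_jT)=1+x_1y_1T$, $\prod_{1\le i_1\le 1}(1+x_{i_1}T)=1+x_1T$ and the empty product (for $j\ge 2$) are immediate. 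Part~(iii) is of the same nature: for a line element $l$, \eqref{eq:special-1} applied to $(l,x)$ reads $\lambda^k(lx)=P_k(l,0,\dots,0,\lambda^1 x,\dots,\lambda^k x)$, and specialising \eqref{eq:universal-polynomials-1} to $\vec x=(x_1,0,\dots,0)$ gives the polynomial identity $P_k(\alpha,0,\dots,0,u_1,\dots,u_k)=\alpha^k u_k$ (using that $\lambda_1(\vec y),\dots,\lambda_n(\vec y)$ are algebraically independent once $n\ge k$). Hence \eqref{eq:special-1} for $(l,x)$ is literally the identity $\lambda^k(lx)=l^k\lambda^k x$ defining a line-special ring in \Cref{defn:line-special}, which is exactly the content of part~(iii).

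For part~(ii) the implication ``only if'' is trivial, so suppose the three displayed identities hold for positive $x,y$ of rank at most two. Then $\lambda_t(x)=1+xt+\lambda^2 x\,t^2$ and $\lambda_t(y)=1+yt+\lambda^2 y\,t^2$ with $\lambda^2 x$ and $\lambda^2 y$ each either zero or a line element, and $\lambda^k(xy)=0$ for $k\ge 5$ since $d(xy)\le 4$. Setting $n=2$ in \eqref{eq:universal-polynomials-1} with $\vec x=(a_1,a_2)$, $\vec y=(b_1,b_2)$, the right-hand side $\prod_{1\le i,j\le 2}(1+a_ib_jT)$ has degree $4$ in $T$, and its coefficients, being symmetric in the $a$'s and in the $b$'s, are polynomials in $e_1(\vec a),e_2(\vec a),e_1(\vec b),e_2(\vec b)$; a short computation with the elementary symmetric functions of the four products $a_ib_j$ identifies them as $1$ and $e_1(\vec a)e_1(\vec b)$ in degrees $0,1$, the three displayed right-hand sides in degrees $2,3,4$, and $0$ in degrees $\ge 5$. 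Substituting $e_1(\vec a)=\lambda^1 x$, $e_2(\vec a)=\lambda^2 x$, $e_1(\vec b)=\lambda^1 y$, $e_2(\vec b)=\lambda^2 y$, this shows the right-hand side of \eqref{eq:special-1} for $(x,y)$ equals $\lambda^k(xy)$ automatically for $k\le 1$ and $k\ge 5$ and, by hypothesis, for $k\in\{2,3,4\}$. Finally \eqref{eq:special-2} for $x$ (and $y$) is automatic: it is trivial for $j=1$; for $j\ge 3$ both sides vanish since $\lambda^j x=0$ and (by the $n=2$, $j\ge 3$ specialisation of \eqref{eq:universal-polynomials-2}) $P_{k,j}$ specialises to $0$ for $k\ge 1$; and for $j=2$ the left-hand side is $\lambda^2 x$ for $k=1$ and $0$ for $k\ge 2$ (since $\lambda^2 x$ is zero or a line element), matching $P_{k,2}$ under the $n=2$, $j=2$ specialisation $\prod_{1\le i_1<i_2\le 2}(1+a_{i_1}a_{i_2}T)=1+\lambda^2 x\,T$.

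I expect part~(ii) to be the only place calling for genuine care: one has to keep track of the several degenerate specialisations of the universal polynomials and carry out the elementary but slightly fiddly symmetric-function computation that produces the explicit formulas for $\lambda^2(xy)$, $\lambda^3(xy)$ and $\lambda^4(xy)$. Parts~(i) and~(iii) are purely formal consequences of the defining equations \eqref{eq:universal-polynomials-1} and \eqref{eq:universal-polynomials-2}.
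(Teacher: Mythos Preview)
Your proposal is correct and follows essentially the same approach as the paper: specialising the defining generating functions \eqref{eq:universal-polynomials-1} and \eqref{eq:universal-polynomials-2} to a small number of variables and reading off the resulting polynomial identities. The paper in fact only sketches part~(ii) and leaves (i) and (iii) to the reader, so your write-up is more complete than the original.
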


\begin{proof}
We sketch the proof of part~\eqref{lambda-identities-for-rank-two}. Consider first the identities \eqref{eq:special-2}. If we set all variables \(x_3, x_4,\dots\) to zero in the defining equations \eqref{eq:universal-polynomials-2} for \(P_{k,j}\), we obtain the identities
\[
\begin{aligned}
\textstyle\sum_k P_{k,1}(\lambda_1x, \lambda_2x, 0,\dots, 0)T^k
&= (1+x_1T)(1+x_2T) = 1 + \lambda_1x \cdot T + \lambda_2 x \cdot T^2\\
\textstyle\sum_k P_{k,2}(\lambda_1x, \lambda_2x, 0,\dots, 0)T^k
&= 1+x_1x_2\cdot T = 1 + \lambda_2x \cdot T\\
\textstyle\sum_k P_{k,j}(\lambda_1x, \lambda_2x, 0,\dots, 0)T^k &= 1 \quad\quad\text{ for all } j \geq 3
\end{aligned}
\]
Thus, for any element \(x\in K\) satisfying \(\lambda^kx = 0\) for \(k\geq 3\), the identities \eqref{eq:special-2} may be written as
\[
\begin{aligned}
\lambda^1(\lambda^1 x) &= \lambda^1 x, \quad
\lambda^2(\lambda^1 x) = \lambda^2 x,  \quad
\lambda^k(\lambda^1 x) = 0 \text{ for all } k \geq 3\\
\lambda^1(\lambda^2 x) &= \lambda^2 x, \quad
\lambda^k(\lambda^2 x) = 0 \text{ for all } k \geq 2\\
\lambda^k(\lambda^j x) &= 0 \text{ for all } j \geq 3 \text{ and all } k \geq 1
\end{aligned}
\]
If \(x\) is positive of rank at most two, then \(\lambda^2x\) is positive of rank at most one and all these relations are trivial.

Similarly, if we set all variables \(x_3, x_4,\dots\) and \(y_3, y_4, \dots\) to zero in the defining equation \eqref{eq:universal-polynomials-1} for the polynomials \(P_{k}\), we obtain the following identity:
\[
\begin{aligned}
  \textstyle\sum_k &P_k(\lambda_1x,\lambda_2x,0,\dots,0,\lambda_1y, \lambda_2y, 0,\dots, 0)\\
  &= (1 + x_1 y_1 T)(1 + x_1 y_2 T)(1 + x_2 y_1 T)(1 + x_2 y_2 T)\\
  &= 1 + \left(\lambda_1 x \cdot \lambda_1 y\right) T \\
  &\quad\quad + \left((\lambda_1 x)^2\cdot\lambda_2y + (\lambda_1 y)^2\cdot\lambda_2 x - 2\lambda_2x\cdot\lambda_2y\right) T^2\\
  &\quad\quad + \left(\lambda_1x \cdot \lambda_1 y \cdot \lambda_2 x \cdot \lambda_2y \right) T^3
       + \left((\lambda_2 x)^2\cdot (\lambda_2 y)^2\right) T^4
\end{aligned}
\]
The claims follow.
\end{proof}

\subsection{Reduction to the case of split orthogonal groups}\label{sec:reduction}
Our goal is to show that the pre-\(\lambda\)-structure on the symmetric representation ring of an affine algebraic group scheme defined above is in fact a \(\lambda\)-structure.
As a first step, we reduce to the case of the split orthogonal group \(\OO{m}\) and its products \(\OO{m_1}\times\OO{m_2}\).

For comparison and later use, we recall from \cite{SGA6}*{Exposé~0, App.\ RRR, \S\,2, 1) and 3)} the corresponding argument for the usual representation rings: the fact that these are \(\lambda\)-rings for any affine algebraic group scheme follows from the case of products of general linear groups \(\GL{m_1}\times\GL{m_2}\).
\begin{thm}[Serre]\label{G-K-special}
  The representation ring \(\KRep{G}\) of any affine algebraic group scheme \(G\) is a \(\lambda\)-ring.
\end{thm}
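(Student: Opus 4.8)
The plan is to reproduce Serre's two-step reduction (\cite{SGA6}*{Expos\'e~0, App.\ RRR}): first to products of general linear groups, and then to tori. Since \(\K(G)\) is, by definition, additively generated by its positive elements --- the classes \([M]\) of \(G\)-modules --- the generation criterion (\cref{lem:special-via-generators}) reduces the theorem to checking, for each pair of \(G\)-modules \(M\) and \(N\), that the pair \(\{[M],[N]\}\) satisfies the \(\lambda\)-identities \eqref{eq:special-1} and \eqref{eq:special-2}. So fix \(G\)-modules \(M\) and \(N\) of dimensions \(m_1\) and \(m_2\). The corresponding homomorphisms \(G\to\GL{}(M)=\GL{m_1}\) and \(G\to\GL{}(N)=\GL{m_2}\) assemble into a single homomorphism \(\rho\colon G\to\GL{m_1}\times\GL{m_2}\), and restriction along \(\rho\) is a ring homomorphism \(\rho^\ast\colon\K(\GL{m_1}\times\GL{m_2})\to\K(G)\). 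This map commutes with exterior powers, since \(\Lambda^i(V|_G)=(\Lambda^i V)|_G\) naturally; hence \(\rho^\ast\) is a \(\lambda\)-homomorphism, and its image contains the pull-backs \([M]\) and \([N]\) of the standard representations of the two factors. By the detection criterion it therefore suffices to prove that \(\K(\GL{m_1}\times\GL{m_2})\) is a \(\lambda\)-ring.

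For this, let \(T\subset\GL{m_1}\times\GL{m_2}\) be the diagonal maximal torus, so that \(T\cong\Gm^{m_1+m_2}\) and \(\K(T)\) is the group ring \(\Z[t_1^{\pm1},\dots,t_{m_1+m_2}^{\pm1}]\) of the character lattice of \(T\). This ring is additively generated by the Laurent monomials in the \(t_i\), each of which is a one-dimensional representation and hence a line element; since line elements satisfy the \(\lambda\)-identities (\Cref{lambda-identities-for-low-rank-elements}), \cref{lem:special-via-generators} shows that \(\K(T)\) is a \(\lambda\)-ring. On the other hand, \(\GL{m_1}\times\GL{m_2}\) is split reductive, so by Serre's theorem \cite{Serre} the restriction \(\K(\GL{m_1}\times\GL{m_2})\to\K(T)\) is injective; being again a \(\lambda\)-homomorphism for the reason given above, it exhibits \(\K(\GL{m_1}\times\GL{m_2})\) as a sub-pre-\(\lambda\)-ring of the \(\lambda\)-ring \(\K(T)\). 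The embedding criterion now shows that \(\K(\GL{m_1}\times\GL{m_2})\) is a \(\lambda\)-ring, which completes the proof.

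I do not anticipate any real obstacle in carrying this out: the only ingredient that is not purely formal is the injectivity of restriction to a maximal torus for split reductive groups, which is part of Serre's computation of \(\K\); everything else is bookkeeping with the generation, detection and embedding criteria. By contrast, the parallel argument for \(\GW(G)\) in the rest of this section is genuinely harder, because the substitute for a ``maximal torus'' in the orthogonal world is more complicated and because the \GrothendieckWitt ring of a torus is not simply a group ring on line elements --- which is precisely what the remainder of the section is devoted to.
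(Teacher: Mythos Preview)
Your argument is correct and follows essentially the same route as the paper: reduction to \(\GL{m_1}\times\GL{m_2}\) via the detection criterion, with the pair \([M],[N]\) pulled back from the two standard representations. The only difference is that the paper simply cites Serre for the case of \(\GL{m_1}\times\GL{m_2}\), whereas you spell out the standard argument via the embedding into the maximal torus --- which is precisely the ``usual strategy'' the paper alludes to in \Cref{sec:outline}.
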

\begin{proof}[Proof, assuming the theorem for \(\GL{m_1}\times\GL{m_2}\)]
  Any finite-dimensional linear \(G\)-module can be obtained by pulling pack the standard representation of \(\GL{m}\) along some morphism \(G\to \GL{m}\). Its class in \(\KRep{G}\) is therefore contained in the image of the induced morphism of \(\lambda\)-rings
  \(\KRep{\GL{m}} \to \KRep{G}\).

  Similarly, given two \(G\)-modules corresponding to morphisms \(G\xrightarrow{\rho_1}\GL{m_1}\) and \(G\xrightarrow{\rho_2}\GL{m_2}\), we can consider the composition
  \[
  G\to G\times G \xrightarrow{(\rho_1,\rho_2)} \GL{m_1}\times\GL{m_2},
  \]
  where the first map is the diagonal.  Under this composition, the standard representation of \(\GL{m_1}\) pulls back to the first \(G\)-module, while the standard representation of \(\GL{m_2}\) pulls back to the second. Thus, the classes of these \(G\)-modules are both contained in the image of the induced morphism
  \[
  \KRep{\GL{m_1}\times \GL{m_2}} \to \KRep{G}.
  \]
  Therefore, by the detection criterion, it suffices to know that \(\KRep{\GL{m_1}\times\GL{m_2}}\) is a \(\lambda\)-ring.
\end{proof}
For the case of \(\GL{m_1}\times\GL{m_2}\) itself, see for example \cite{Serre} and the remarks below.

\begin{thm}\label{G-GW-special}
  The symmetric representation ring \(\GWRep{G}\) of any affine algebraic group scheme \(G\) over a field of characteristic not two is a \(\lambda\)-ring.
\end{thm}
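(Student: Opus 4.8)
The plan is to mimic the proof of \Cref{G-K-special} and reduce to the ``universal case'' of products of split orthogonal groups, then dispatch that case by the criteria collected in \Cref{sec:lambda-rings}. First I would observe that the positive structure on \(\GW(G)\) established after \Cref{lem:line-special} means that \(\GW(G)\) is additively generated by its positive elements, \ie by classes of symmetric \(G\)-modules \((M,\mu)\). By the detection criterion, it therefore suffices to show that for any two symmetric \(G\)-modules \((M_1,\mu_1)\) and \((M_2,\mu_2)\), the pair of classes \(\{[(M_1,\mu_1)],[(M_2,\mu_2)]\}\) lies in the image of a \(\lambda\)-homomorphism from some \(\lambda\)-ring. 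Now a symmetric \(G\)-module of dimension \(m_i\) is precisely a homomorphism \(G\to\OO{m_i}\) into (some form of) the orthogonal group, and its underlying symmetric form is classified by pulling back the standard symmetric form. The subtlety here, absent in the \(\GL{}\) case, is that a general symmetric form over \(F\) need not be split, so the classifying map might land in a twisted orthogonal group rather than the split \(\OO{m_i}\). I would handle this exactly as is standard: after adding a hyperbolic module (which lies in the image of the hyperbolic map and costs nothing because \(H(M) = \lambda\text{-image of }\K\)), or — cleaner — by first passing through the observation that \(\GW(G)\) is generated \emph{by line elements over} the classes of maps into \emph{split} orthogonal groups, using \Cref{hyperbolic-lemma} and the decomposition \eqref{eq:semisimple-decomp-GW} to absorb the discriminant/anisotropic part into a rank-one twist; \Cref{lem:line-special} then allows the use of \Cref{lem:special-via-line-generators} in place of bare detection. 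Either way, the diagonal trick \(G\to G\times G\to \OO{m_1}\times\OO{m_2}\) from the proof of \Cref{G-K-special} pulls both classes simultaneously into the image of \(\GW(\OO{m_1}\times\OO{m_2})\to\GW(G)\).

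This reduces everything to proving that \(\GW(\OO{m_1}\times\OO{m_2})\) is a \(\lambda\)-ring, and more generally that \(\GW(\OO{m})\) is. I expect this to be carried out in \Cref{sec:structure-is-special} proper (the theorems \texttt{thm:O-special} and the line/generation machinery are clearly set up for it): one embeds \(\GW(\OO{m})\) into \(\GW\) of an extension \(\tilde T\) of a maximal split torus \(T\subset \OO{m}\) — the restriction map \(\GW(\OO{m})\to\GW(\tilde T)\) being injective by a weight/character argument analogous to Serre's, using \Cref{symmetric-JH-decomp} to control which isotypical summands appear — and then checks directly that \(\GW(\tilde T)\) is a \(\lambda\)-ring. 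The ring \(\GW(\tilde T)\) is close to a Laurent-polynomial ring on the characters together with a \(\GW(F)\)-factor, so it should be additively generated by line elements (the symmetric characters) over the subring \(\GW(F)\); since \(\GW(F)\) is a \(\lambda\)-ring by McGarraghy \cite{McGarraghy:exterior} and \(\GW(\tilde T)\) is line-special by \Cref{lem:line-special}, the Line Generation Lemma \Cref{lem:special-via-line-generators} finishes it. The embedding criterion then transports $\lambda$-ringhood back up to \(\GW(\OO{m})\) and hence, via the reduction above, to \(\GW(G)\).

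The main obstacle is the reduction step's faithfulness input, \ie ensuring that every symmetric \(G\)-module really is captured by a map into a split group up to the moves allowed by the positive structure. Concretely, the danger is that the discriminant of \((M,\mu)\) obstructs writing \((M,\mu)\) — even stably, even after twisting by line elements — as a pullback from a split \(\OO{m}\). The resolution uses \Cref{hyperbolic-lemma} in the form \((M,\mu)\perp(M,-\mu)\cong H(M)\): modulo hyperbolic modules one is free to rescale, and the hyperbolic modules are harmless since \(H = \) (hyperbolic map applied to a \(\K\)-class) and the hyperbolic map is a map of \(\GW(G)\)-modules whose image consists of \(\lambda\)-compatible elements coming from the \(\lambda\)-ring \(\K(G)\) of \Cref{G-K-special}. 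Once this bookkeeping is in place the rest is the formal apparatus of \Cref{sec:lambda-rings}, and the only genuinely computational part — verifying \(\GW(\tilde T)\) or \(\GW(F)\) is special — has already been isolated into small, self-contained claims.
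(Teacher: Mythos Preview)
Your overall strategy matches the paper's, but there are two genuine gaps.

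First, your handling of the non-split obstruction in the reduction step does not work as stated. The hyperbolic map \(H\colon \K(G)\to\GW(G)\) is only a group homomorphism, not a \(\lambda\)-homomorphism---it is not even a ring map---so the fact that \(\K(G)\) is a \(\lambda\)-ring gives no control over the \(\lambda\)-identities for elements of the form \(H(M)\); the detection criterion requires the incoming map to be a \(\lambda\)-homomorphism. Nor does your line-generation alternative help: a general symmetric form over \(F\) is not a rank-one twist of a split form, so you cannot ``absorb the anisotropic part into a rank-one twist''. The paper's trick is more specific and sidesteps both problems. Given \((\rho_E,\epsilon)\), one adds the \emph{trivial} \(G\)-module \((c_E,-\epsilon)\); by \Cref{hyperbolic-lemma} the underlying symmetric vector space of the sum is \(H(E)\), so \((\rho_E,\epsilon)\oplus(c_E,-\epsilon)\) is classified by a map \(G\to\OO{}(H(E))\cong\OO{2\dim E}\) into the \emph{split} group. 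Since the trivial module \((c_E,-\epsilon)\) also pulls back from \(\OO{2\dim E}\), the difference \((\rho_E,\epsilon)\) already lies in the image of \(\GW(\OO{2\dim E})\to\GW(G)\). No appeal to \(\K(G)\) or to line elements is needed at this stage.

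Second, and more seriously, your plan for the universal case breaks: \(\GW(\tilde T)\) is \emph{not} generated by line elements over \(\GW(F)\). The only way to make restriction from a split (special) orthogonal group injective on \(\GW\) is to replace the bare torus \(T\) by an extended torus \(T\rtimes\Z/2\), and this group has irreducible two-dimensional modules \([e^{\vec\gamma}]=e^{\vec\gamma}\oplus e^{-\vec\gamma}\) for each nonzero pair \(\{\vec\gamma,-\vec\gamma\}\), with \(\Z/2\) swapping the summands. These are symmetric but not line elements, and together with \(1^+,\delta^+\) they form a \(\GW(F)\)-basis of \(\GW(T\rtimes\Z/2)\). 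So \cref{lem:special-via-line-generators} does not apply with \(E\) consisting of elements of \(\GW(F)\); one must verify the \(\lambda\)-identities for the rank-two generators directly via \Cref{lambda-identities-for-low-rank-elements}\,\eqref{lambda-identities-for-rank-two}, which is a genuine computation (\Cref{ST-GW-special}). Relatedly, the paper does not embed \(\GW(\OO{m})\) itself into \(\GW(T\rtimes\Z/2)\): it embeds \(\GW(\SO{2n+1})\) there, and then shows that \(\GW(\OO{m})\) is generated by the line elements \(1^+,\delta^+\) over the image of a suitable \(\GW(\SO{2n+1})\), which requires its own weight argument (\Cref{thm:O-special}).
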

\begin{proof}[Proof, assuming the theorem for \(\OO{m_1}\times\OO{m_2}\)]
  A symmetric representation \((\rho_E,\epsilon)\) of \(G\) corresponds to a morphism
  \(
  G \to \OO{}(E,\epsilon),
  \)
  where \((E,\epsilon)\) is some symmetric vector space. Of course, in general \((E,\epsilon)\) will not be split, but we can achieve this as follows:

  As \(2\) is invertible, the orthogonal sum \((E,\epsilon)\oplus (E,-\epsilon)\) is isometric to the hyperbolic space \(H(E)\) (\Cref{hyperbolic-lemma}).
  Let \(c_E\) denote the trivial \(G\)-module with underlying vector space \(E\).
  Then the symmetric \(G\)-module \((\rho_E,\epsilon) \oplus (c_E, -\epsilon)\) corresponds to a morphism
  \[
\begin{aligned}
   G &\to \OO{}\left((E,\epsilon)\oplus (E,-\epsilon)\right) \cong \OO{}\left(H(E)\right) \cong \OO{2\dim E}.
  \end{aligned}
\]
  Thus, the class of  \((\rho_E,\epsilon) \oplus (c_E, -\epsilon)\) is contained in the image of a morphism of \(\lambda\)-rings
  \begin{equation*}\label{eq:red-to-O}
  \GWRep{\OO{2\dim E}}\to \GWRep{G}.
  \end{equation*}
  The second summand, the trivial representation \((c_E, -\epsilon)\), can be obtained by pulling back the corresponding trivial representation from \(\OO{2\dim E}\). So the first summand, the class of \((\rho_E,\epsilon)\), is itself in the image of \eqref{eq:red-to-O}.

Likewise, given two symmetric representations \((\rho_E,\epsilon)\) and \((\rho_F,\phi)\) of \(G\), we can obtain
\( (\rho_E,\epsilon) \oplus (c_E,-\epsilon) \) and \((\rho_F,\phi) \oplus (c_F,-\phi) \) by restricting from \(\OO{2\dim E}\times \OO{2\dim F}\); both \((\rho_E,\epsilon)\) and \((\rho_F,\phi)\) are therefore contained in the image of a morphism
\[
  \GWRep{\OO{2\dim E}\times\OO{2\dim F}} \to \GWRep{G}.
\]
So under the assumption that \(\GWRep{\OO{2\dim E}\times\OO{2\dim F}}\) is a \(\lambda\)-ring, we can conclude as in the proof of \Cref{G-K-special}.
\end{proof}
It remains to show that the theorem is indeed true for products of split orthogonal groups.
This is the aim of the following sections, finally achieved in \Cref{thm:O-special}.

\subsection{Outline of the proof for split orthogonal groups}\label{sec:outline}
The usual strategy for showing that the representation ring of a split reductive group is a \(\lambda\)-ring is to use its embedding into the representation ring \(\KRep{T}\) of a maximal torus, and the fact that the latter ring is generated by line elements.
However, on the level of Grothendieck-Witt groups, the restriction to \(T\) cannot be injective:
none but the trivial character of \(T\) are symmetric, and hence the symmetric representation ring \(\GWRep{T}\) only contains one copy of the Grothendieck-Witt group of our base field. In contrast, all simple \(\OO{m}\)-modules are symmetric (see \Cref{Om-simple-modules}).

We are therefore led to look for a replacement for \(T\) with a larger supply of symmetric representations.
The candidate we choose is a semi-direct product \(T\rtimes\Z/2\) of the torus with a cyclic group of order two, which we will refer to as an ``extended torus''.  As we will see, \emph{all} representations of \(T\rtimes\Z/2\) are symmetric.

Our proof can be summarized as follows:
\begin{enumerate}[\bf Step~1.]
\item  \(\GWRep{T\rtimes\Z/2}\) is a \(\lambda\)-ring.  As all simple representations of \(T\rtimes\Z/2\) are of rank at most two, this can be checked directly in terms of the \(\lambda\)-identities. See \Cref{lambda-identities-for-low-rank-elements} and \Cref{ST-GW-special} below.

\item \(\GWRep{\SO{2n_1+1}\times\SO{2n_2+1}}\) is a \(\lambda\)-ring: it embeds into \(\GWRep{T\rtimes\Z/2}\), where \(T\) is a maximal torus in the product of special orthogonal groups. See \Cref{SOxSO-GW-special}.

\item \(\GWRep{\OO{m_1}\times\OO{m_2}}\) is a \(\lambda\)-ring: it is generated by line elements over the image of \(\GWRep{\SO{2n_1+1}\times\SO{2n_2+1}}\) for appropriate \(n_1\) and \(n_2\). See \Cref{thm:O-special}.
\end{enumerate}

\subsection{Representations of extended tori}
\newcommand{\mult}[1]{|#1|} 
The group \(\OO{2}\) is a twisted product of \(\SO{2}=\Gm\) and \(\Z/2\):
for any connected \(F\)-algebra \(A\), we have an isomorphism
\[
\begin{aligned}
  \Gm(A) \rtimes \Z/2 &\xrightarrow{\cong} \OO{2}(A)\\
  (a,x) &\mapsto \M{a & 0 \\ 0 & a^{-1}}\M{0&1\\1&0}^{x}
\end{aligned}
\]
We consider more generally semi-direct products \(T\rtimes\Z/2\), where \(T=\Gm^r\) is a split torus on which \(\Z/2\) acts by multiplicative inversion, \ie
\[
1.(a_1,\cdots, a_r) := (a_1^{-1},\cdots, a_r^{-1})
\]
for \((a_1,\dots,a_r)\in T(A)\) and \(1\in\Z/2\).
If we introduce the notation \(\mult{x}:=(-1)^x\) for \(x\in\Z/2\), we can write the action as
\[
x.(a_1,\cdots, a_r) = (a_1^{\mult{x}},\cdots, a_r^{\mult{x}}).
\]
The group structure on \(T\rtimes\Z/2\) is given by
\(
(\vec a,x)(\vec b,y) = (\vec a\cdot \vec b^{\mult{x}},x+y)
\)
in this notation.

We write \(T^*:= \Hom(T,\Gm)\) for the character group of the torus, a free abelian group of rank \(r\).  The one-dimensional \(T\)-representation corresponding to a character \(\vec\gamma\in T^*\) is denoted \(e^{\vec\gamma}\).

\begin{mdprop}[\(T\rtimes\Z/2\)-modules]
  All representation of \(T\rtimes\Z/2\) are semi-simple.
  The isomorphism classes of simple \(T\rtimes\Z/2\)-modules can be enumerated as follows:
  \begin{compactitem}
  \item[{$1$},] the trivial one-dimensional representation
  \item[{$\delta$},] the one-dimensional representation on which \(T\) acts trivially while the generator of \(\Z/2\) acts as \(-1\).
  \item[{$[e^{\vec\gamma}]$}] \(:= e^{\vec\gamma} \oplus e^{-\vec\gamma}\), for each pair of characters \(\{\vec\gamma,-\vec\gamma\}\) of \(T\) with \(\vec\gamma\neq 0\). Here, \(\Z/2\) acts by interchanging the two factors.\qed
  \end{compactitem}
\end{mdprop}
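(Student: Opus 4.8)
The plan is to analyse an arbitrary $T\rtimes\Z/2$-module $M$ by first restricting to the diagonalizable subgroup $T=\Gm^r$ and then exploiting the order-two symmetry supplied by $\Z/2$; this is really just the statement that an extension of linearly reductive groups is linearly reductive, made fully explicit.

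\textbf{Step 1 (weight decomposition and the involution).} Since $T$ is diagonalizable, the restriction of $M$ to $T$ decomposes canonically into finitely many weight spaces, $M=\bigoplus_{\vec\gamma\in T^*}M_{\vec\gamma}$. Let $\tau\in\GL{}(M)$ be the operator by which the generator $(\vec e,1)\in T\rtimes\Z/2$ acts. From the group law $(\vec a,x)(\vec b,y)=(\vec a\cdot\vec b^{\mult{x}},x+y)$ one reads off $\tau^2=\id$ and $\tau\,t\,\tau^{-1}=t^{-1}$ for $t\in T$. Hence for $v\in M_{\vec\gamma}$ and $t\in T$ one computes $t\cdot(\tau v)=\tau\cdot\bigl((\tau^{-1}t\tau)v\bigr)=\tau\cdot(t^{-1}v)=\vec\gamma(t)^{-1}\,\tau v$, so $\tau$ maps $M_{\vec\gamma}$ isomorphically onto $M_{-\vec\gamma}$.

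\textbf{Step 2 (splitting off the summands).} The weight-zero space $M_0$ is a $G$-submodule on which $\tau$ is an involution, so, as $2$ is invertible, it splits $G$-equivariantly as $M_0=M_0^{+}\oplus M_0^{-}$ into the $\pm 1$-eigenspaces of $\tau$; since $T$ acts trivially on $M_0$, these are direct sums of copies of the trivial module $1$ and of $\delta$ respectively. For each pair $\{\vec\gamma,-\vec\gamma\}$ with $\vec\gamma\neq 0$ (note $\vec\gamma\neq-\vec\gamma$, as $T^*\cong\Z^r$ is torsion-free), the space $M_{\vec\gamma}\oplus M_{-\vec\gamma}$ is $T$-stable and $\tau$-stable, hence a $G$-submodule; choosing any $F$-basis $v_1,\dots,v_k$ of $M_{\vec\gamma}$, the spans $\vecspan{v_i,\tau v_i}$ are $G$-submodules each isomorphic to $[e^{\vec\gamma}]$, and they exhibit $M_{\vec\gamma}\oplus M_{-\vec\gamma}\cong k\cdot[e^{\vec\gamma}]$. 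Collecting these identifications over all weights expresses $M$ as a finite direct sum of copies of $1$, $\delta$ and the modules $[e^{\vec\gamma}]$.

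\textbf{Step 3 (simplicity and irredundancy).} Finally, the listed modules are simple and pairwise non-isomorphic: $1$ and $\delta$ are one-dimensional and distinguished by the $\Z/2$-action, while a $G$-submodule of $[e^{\vec\gamma}]$ is in particular $T$-stable, hence one of $0,\,e^{\vec\gamma},\,e^{-\vec\gamma},\,[e^{\vec\gamma}]$, of which only $0$ and $[e^{\vec\gamma}]$ are $\tau$-stable (again using $\vec\gamma\neq-\vec\gamma$); comparing $T$-weights shows $[e^{\vec\gamma}]\cong[e^{\vec\delta}]$ exactly when $\{\vec\gamma,-\vec\gamma\}=\{\vec\delta,-\vec\delta\}$. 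Since Step 2 decomposes every module into simple pieces from this list, every representation is semi-simple and the enumeration is complete. I do not expect a genuine obstacle here; the only points needing (minor) care are the semidirect-product sign conventions in Step 1 and the torsion-freeness of $T^*$ used in Steps 2 and 3.
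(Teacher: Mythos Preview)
Your argument is correct and follows essentially the same route as the paper: decompose under the diagonalizable subgroup $T$ into weight spaces, observe that the involution $\tau$ swaps $M_{\vec\gamma}$ and $M_{-\vec\gamma}$, split off $M_0$ into $\pm 1$-eigenspaces of $\tau$, and break each $M_{\vec\gamma}\oplus M_{-\vec\gamma}$ with $\vec\gamma\neq 0$ into copies of $[e^{\vec\gamma}]$. You are in fact slightly more thorough than the paper, which omits your Step~3 (the explicit verification that the listed modules are simple and pairwise non-isomorphic) and only mentions an alternative derivation via general semidirect-product lemmas from the appendix.
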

The representation \([e^{\vec 0}] := 1 \oplus 1\) with \(\Z/2\) switching the factors is isomorphic to the direct sum \(1\oplus \delta\).
For \(r=1\), \( [e^{1}] \) is the standard representation of \(\OO{2}\).

\begin{proof}
Let \(V\) be a \(T\rtimes\Z/2\)-representation.  As \(T\) is diagonalizable, the restriction of \(V\) to \(T\) decomposes into a direct sum of eigenspaces \( V_{\vec\gamma}\).  Writing \(1\) for the generator of \(\Z/2\), we find that
\(1.V_{\vec\gamma} \subset V_{-\vec\gamma}\). Thus, \(V_{\vec 0}\) is a \(T\rtimes\Z/2\)-submodule of \(V\), as is \(V_{\vec\gamma}\oplus V_{-\vec\gamma}\) for each non-zero \(\vec\gamma\).

The zero-eigenspace \(V_{\vec 0}\) may be further decomposed into copies of \(1\) and \(\delta\).  For non-zero \(\vec\gamma\), we can decompose \(V_{\vec\gamma}\) into a direct sum of copies of \(e^{\vec\gamma}\), and then \(V_{\vec\gamma}\oplus V_{\vec\gamma}\) decomposes into a direct sum of copies of \(e^{\vec\gamma} \oplus 1.e^{\vec\gamma}\cong [e^{\vec\gamma}]\).

Alternatively, we may find all simple \(T\rtimes\Z/2\)-representations by applying \Cref{reps-of-sdp:main,reps-of-sdp:lifts}.
Indeed, if we twist the \(T\)-action on \(e^{\vec \gamma}\) by \(1\in\Z/2\) (see \Cref{def:twisted-rep}), we obtain \(e^{-\vec\gamma}\), which is isomorphic to \(e^{\vec \gamma}\) if and only if \(\vec\gamma = 0\).
\end{proof}

All representations of \(T\rtimes\Z/2\) are symmetric.
For later reference, we choose a distinguished symmetry on each simple representation as follows.
\begin{mdprop}\label{tt:symmetries}
  The following symmetric representations form a basis of \mbox{\(\GWRep{T\rtimes\Z/2}\)} as a \(\GW(F)\)-module:
  \begin{compactitem}\raggedright
  \item[{$1^+$}]\( := (1, (1))\), the trivial representation equipped with the trivial symmetric form.
  \item[{$\delta^+$}]\( := (\delta, (-1))\), the representation \(\delta\) equipped with the symmetric form \((-1)\).
  \item[{$[e^{\vec\gamma}]^+$}]\( := ([e^{\vec\gamma}],\mm{0&1\\1&0}) \) for each pair \(\{\vec\gamma,-\vec\gamma\}\) with \(\gamma \neq 0\): the representation \([e^{\vec\gamma}]\) equipped with the equivariant symmetric form
    \[
    \mm{0&1\\1&0}\colon e^{\vec\gamma}\oplus e^{-\vec\gamma} \xrightarrow{\quad\cong\quad} (e^{\vec\gamma}\oplus e^{-\vec\gamma})^\vee = e^{-\vec\gamma}\oplus e^{\vec\gamma}.
    \]
  \end{compactitem}
\end{mdprop}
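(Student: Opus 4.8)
The plan is to read the statement off the structure theorem \Cref{GW-of-Rep:simplified}, once its hypothesis has been verified and the sets \(\Sigma_+,\Sigma_-,\Sigma_0\) of simple modules have been identified. So \(T\rtimes\Z/2\) being an affine algebraic group scheme over \(F\), the first task is to check that every simple \(T\rtimes\Z/2\)-module has endomorphism ring \(F\). For the one-dimensional modules \(1\) and \(\delta\) this is clear. For \([e^{\vec\gamma}] = e^{\vec\gamma}\oplus e^{-\vec\gamma}\) with \(\vec\gamma\neq 0\), the restriction to \(T\) has the two distinct eigencharacters \(\vec\gamma\) and \(-\vec\gamma\), so any \(T\)-linear endomorphism is diagonal; since the generator of \(\Z/2\) interchanges the two eigenlines, equivariance forces the two diagonal entries to agree, giving \(\End_{T\rtimes\Z/2}([e^{\vec\gamma}]) = F\). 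Hence \Cref{GW-of-Rep:simplified} applies.

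Next I would determine the three families. The preceding proposition exhausts the simple \(T\rtimes\Z/2\)-modules, and each of \(1\), \(\delta\) and \([e^{\vec\gamma}]\) is self-dual (for \(\delta\) because \((-1)^{-1}=-1\); for \([e^{\vec\gamma}]\) because dualizing swaps \(e^{\vec\gamma}\) and \(e^{-\vec\gamma}\) and commutes with the \(\Z/2\)-action), so \(\Sigma_0=\varnothing\). It then remains to see that each lies in \(\Sigma_+\) and that the forms \(1^+\), \(\delta^+\), \([e^{\vec\gamma}]^+\) of the statement are admissible choices of the distinguished symmetries \(\sigma_S\). For \(1\) and \(\delta\) this is immediate, as every bilinear form on a line is symmetric and the scalar \((-1)\) is visibly a \(T\rtimes\Z/2\)-equivariant isomorphism \(\delta\to\delta^\vee\). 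For \([e^{\vec\gamma}]\) one checks that \(\mm{0&1\\1&0}\), viewed as a map \(e^{\vec\gamma}\oplus e^{-\vec\gamma}\to e^{-\vec\gamma}\oplus e^{\vec\gamma}=([e^{\vec\gamma}])^\vee\), matches \(e^{\pm\vec\gamma}\) with \(e^{\pm\vec\gamma}\) hence is \(T\)-equivariant, commutes with the factor swap hence is \(\Z/2\)-equivariant, and is symmetric. So \(\Sigma_-=\varnothing\) and \(\Sigma_+=\{1,\delta\}\cup\{[e^{\vec\gamma}]\}_{\vec\gamma\neq 0}\).

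With these identifications the isomorphism \(\alpha\) of \Cref{GW-of-Rep:simplified} becomes
\[
\GW(T\rtimes\Z/2)\;\cong\;\GW(F)\vecspan{1^+}\;\oplus\;\GW(F)\vecspan{\delta^+}\;\oplus\;\bigoplus_{\{\vec\gamma,-\vec\gamma\},\,\vec\gamma\neq 0}\GW(F)\vecspan{[e^{\vec\gamma}]^+},
\]
sending the canonical \(\GW(F)\)-module generators to \(1^+\), \(\delta^+\) and the \([e^{\vec\gamma}]^+\); this is exactly the asserted basis. There is no real obstacle here beyond the endomorphism-ring computation for \([e^{\vec\gamma}]\) (which is what lets us invoke the structure theorem) and the mild care needed in handling isometry classes of one-dimensional symmetric modules over a general field—this is the reason the statement pins down the particular symmetries rather than claiming them to be canonical. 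Everything else is bookkeeping with the list of simple modules.
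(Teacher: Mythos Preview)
Your proposal is correct and follows exactly the route the paper intends: the paper states this proposition without a separate proof, treating it as an immediate consequence of the preceding classification of simple \(T\rtimes\Z/2\)-modules together with \Cref{GW-of-Rep:simplified}, and you have simply filled in the verifications (endomorphism rings equal \(F\), \(\Sigma_-=\Sigma_0=\varnothing\), the displayed forms are equivariant symmetries) that the paper leaves implicit.
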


In short, the proposition says that we have an isomorphism of \(\GW(F)\)-modules
\[
\GWRep{T\rtimes\Z/2} = \GW(F)\vecspan{1^+, \delta^+ , [e^{\vec\gamma}]^+}_{\substack{\{\vec\gamma,-\vec\gamma\}\\\vec\gamma\neq 0}}
\]
In analogy with the notation \([e^{\vec \gamma}]^+\), we write \([e^{\vec 0}]^+\) for the representation \([e^{\vec 0}]\) equipped with the symmetric form \(\mm{0&1\\1&0}\).  There is an equivariant isometry
\[
   [e^{\vec 0}]^+ \xleftarrow{\;\cong\;} (1\oplus \delta, \mm{2 & 0 \\ 0 & -2})
\]
given by \(\mm{ 1 & 1 \\ 1 & -1}\), so \([e^{\vec 0}]^+ = (2)\cdot 1^+ + (2)\cdot\delta^+\) in \(\GWRep{T\rtimes\Z/2}\).

In order to check the \(\lambda\)-identities for \(\GWRep{T\rtimes\Z/2}\), we will need to understand tensor products and exterior powers in \(\GWRep{T\rtimes\Z/2}\). This is the subject of the next two lemmas.
\begin{lem}The tensor products of the above \(T\rtimes\Z/2\)-modules are as follows:
\begin{align}
  \tag{P1}\label{eq:tt_P1} \delta^{\otimes 2} &\cong 1\\
  \tag{P2}\label{eq:tt_P2} \delta\otimes [e^{\vec\gamma}] &\cong [e^{\vec\gamma}]\\
  \tag{P3}\label{eq:tt_P3} [e^{\vec\gamma}] \otimes [e^{\vec\kappa}] &\cong [e^{\vec\gamma + \vec\kappa}] \oplus [e^{\vec\gamma - \vec\kappa}]
\end{align}
Here, \(\vec\gamma\) and \(\vec \kappa\) are arbitrary characters of \(T\) (possibly zero).
\end{lem}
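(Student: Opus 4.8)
The plan is to verify all three isomorphisms directly from the explicit description of the simple \(T\rtimes\Z/2\)-modules in terms of \(T\)-weight spaces together with the swap action of the generator \(g\) of \(\Z/2\); nothing beyond elementary linear algebra is needed. Isomorphism \eqref{eq:tt_P1} is immediate: the underlying space of \(\delta^{\otimes 2}\) is one-dimensional, \(T\) acts trivially, and \(g\) acts by \((-1)\cdot(-1)=1\), so \(\delta^{\otimes 2}\cong 1\).

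For \eqref{eq:tt_P2}, I would model \(\delta\otimes[e^{\vec\gamma}]\) on the vector space \(e^{\vec\gamma}\oplus e^{-\vec\gamma}\): the \(T\)-action is unchanged, since \(\delta\) has weight \(0\), while \(g\) now acts by \((x,y)\mapsto(-y,-x)\), that is, the swap twisted by the sign coming from \(\delta\). A short check shows that \(\diag{a,b}\) is \((T\rtimes\Z/2)\)-equivariant from this module to \([e^{\vec\gamma}]\) precisely when \(b=-a\); hence \(\diag{1,-1}\) is an isomorphism \(\delta\otimes[e^{\vec\gamma}]\xrightarrow{\cong}[e^{\vec\gamma}]\).

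For \eqref{eq:tt_P3}, I would decompose \([e^{\vec\gamma}]\otimes[e^{\vec\kappa}]\), which is spanned by the four vectors \(e^{\pm\vec\gamma}\otimes e^{\pm\vec\kappa}\), into \(T\)-weight spaces of weights \(\pm\vec\gamma\pm\vec\kappa\). The generator \(g\) swaps \(e^{\vec\gamma}\leftrightarrow e^{-\vec\gamma}\) and \(e^{\vec\kappa}\leftrightarrow e^{-\vec\kappa}\) simultaneously, so it exchanges \(e^{\vec\gamma}\otimes e^{\vec\kappa}\leftrightarrow e^{-\vec\gamma}\otimes e^{-\vec\kappa}\) and \(e^{\vec\gamma}\otimes e^{-\vec\kappa}\leftrightarrow e^{-\vec\gamma}\otimes e^{\vec\kappa}\). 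Therefore \([e^{\vec\gamma}]\otimes[e^{\vec\kappa}]\) is the direct sum of the \((T\rtimes\Z/2)\)-submodule spanned by \(\{e^{\vec\gamma}\otimes e^{\vec\kappa},\,e^{-\vec\gamma}\otimes e^{-\vec\kappa}\}\) and the one spanned by \(\{e^{\vec\gamma}\otimes e^{-\vec\kappa},\,e^{-\vec\gamma}\otimes e^{\vec\kappa}\}\); on the first \(T\) acts with weights \(\pm(\vec\gamma+\vec\kappa)\) and \(g\) swaps the two lines, so it is \([e^{\vec\gamma+\vec\kappa}]\), and the second is \([e^{\vec\gamma-\vec\kappa}]\) for the same reason.

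There is no genuine obstacle here; the one point deserving a remark is the bookkeeping for degenerate characters. If \(\vec\gamma+\vec\kappa=\vec 0\) (or \(\vec\gamma-\vec\kappa=\vec 0\)), the two relevant basis vectors span the weight-\(\vec 0\) space of the tensor product, and the submodule they span is \([e^{\vec 0}]\cong 1\oplus\delta\) under the convention fixed above; the argument of the previous paragraph applies verbatim and produces exactly this summand, so \eqref{eq:tt_P3} holds as stated for all \(\vec\gamma,\vec\kappa\). Alternatively, one could derive \eqref{eq:tt_P2} and \eqref{eq:tt_P3} from the identification \([e^{\vec\gamma}]=\mathrm{Ind}_T^{T\rtimes\Z/2}(e^{\vec\gamma})\) together with the projection formula and \(\mathrm{Res}_T[e^{\vec\kappa}]=e^{\vec\kappa}\oplus e^{-\vec\kappa}\), but the direct computation above is shorter and self-contained.
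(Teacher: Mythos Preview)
Your proof is correct and follows essentially the same route as the paper: for \eqref{eq:tt_P2} you produce the explicit isomorphism \(\diag{1,-1}\), and for \eqref{eq:tt_P3} you decompose the tensor product into the two \(g\)-stable planes spanned by \(\{e^{\vec\gamma}\otimes e^{\vec\kappa},\,e^{-\vec\gamma}\otimes e^{-\vec\kappa}\}\) and \(\{e^{\vec\gamma}\otimes e^{-\vec\kappa},\,e^{-\vec\gamma}\otimes e^{\vec\kappa}\}\), which is exactly the paper's explicit map written in slightly different notation. Your additional remarks on the degenerate case \(\vec\gamma\pm\vec\kappa=\vec 0\) and the induction/projection-formula alternative are fine but not needed.
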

\begin{proof}
The first isomorphism is obvious.
For the other two isomorphisms, we describe \([e^{\vec\gamma}]\)
as the two-dimensional representation with basis \(v_1,v_{-1}\) and action
\[
(\vec a,x).v_{\mu} := v_{\mult{x}\mu}\otimes a_1^{\mult{x}\mu\gamma_1}\cdots a_r^{\mult{x}\mu\gamma_r}.
\]
In this notation, an explicit isomorphism \([e^{\vec\gamma}]\cong[e^{-\vec\gamma}]\) is given by
\begin{equation}\label{eq:tt_R}
  \tag{R}
  \M{0&1\\1&0}\colon
  \begin{aligned}
    [e^{\vec\gamma}] &\xrightarrow{\cong} [e^{-\vec\gamma}]\\
    v_{\mu} &\mapsto v_{-\mu}
  \end{aligned}
\end{equation}
The second two isomorphisms of the lemma can be described as follows:
\begin{align}
  \tag{P2}
  \M{1&0\\0&-1}\colon&
  \begin{aligned}\delta\otimes [e^{\vec\gamma}] &\xrightarrow{\cong} [e^{\vec\gamma}]\\
    \notag v_\mu &\mapsto \mu v_\mu
  \end{aligned}\\
  \notag\\
  \tag{P3}
  \phantom{\M{1&0\\0&-1}\colon}&
  \begin{aligned}
    [e^{\vec\gamma}] \otimes [e^{\vec\kappa}] &\xrightarrow{\cong} [e^{\vec\gamma + \vec\kappa}] \oplus [e^{\vec\gamma - \vec\kappa}]\\
    v_\mu  \otimes v_{\mu\phantom{-}}    &\mapsto \quad (v_\mu,\;0\;)\\
    v_\mu \otimes v_{-\mu}              &\mapsto \quad (\;0\;, v_\mu)
  \end{aligned}
\end{align}
\end{proof}

\begin{lem}\label{lem:tt:isometries}
  With respect to the symmetric forms chosen in \Cref{tt:symmetries}, the isomorphisms \eqref{eq:tt_P1}--\eqref{eq:tt_P3} and \eqref{eq:tt_R} are isometries:
  \begin{align}
    \tag{P1${}^+$}\label{eq:tt_P1+} (\delta^+)^{\otimes 2} &\cong 1^+\\
    \tag{P2${}^+$}\label{eq:tt_P2+} \delta^+\otimes [e^{\vec\gamma}]^+ &\cong [e^{\vec\gamma}]^+\\
    \tag{P3${}^+$}\label{eq:tt_P3+} [e^{\vec\gamma}]^+ \otimes [e^{\vec\kappa}]^+ &\cong [e^{\vec\gamma + \vec\kappa}]^+ \perp [e^{\vec\gamma - \vec\kappa}]^+\\
\tag{R${}^+$}\label{eq:tt_R+} [e^{\vec\gamma}]^+ & \cong [e^{-\vec\gamma}]^+
\intertext{Moreover, for each character \(\vec\gamma\) of \(T\), we have an isometry}
    \tag{L${}^+$}\label{eq:tt_L+} \Lambda^2([e^{\vec\gamma}]^+) &\cong \delta^+.
   \end{align}
\end{lem}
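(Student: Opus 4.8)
The plan is to prove all five statements by a direct computation in the explicit model for $[e^{\vec\gamma}]$ used in the proof of the preceding lemma: the two-dimensional module with basis $v_1,v_{-1}$, with $v_{\pm 1}$ spanning the $\pm\vec\gamma$-eigenspace of $T$, and with the generator of $\Z/2$ acting by $v_\mu\mapsto v_{-\mu}$. The only other inputs are the conventions for the forms induced on tensor products and exterior powers, as fixed in \Cref{sec:symreps} and in Step~1 of the proof of \Cref{prop:lambda-on-GW}: the form on $(M,\mu)\otimes(N,\nu)$ sends $m\otimes n,\ m'\otimes n'$ to $\mu(m)(m')\cdot\nu(n)(n')$, while, via the identification $\eta$, the form on $\Lambda^2(M,\mu)$ sends $m\wedge m',\ n\wedge n'$ to $\det\mm{\mu(m)(n)&\mu(m)(n')\\\mu(m')(n)&\mu(m')(n')}$. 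With these fixed, each of \eqref{eq:tt_P1+}--\eqref{eq:tt_L+} reduces to verifying the isometry condition $\iota^\vee\nu\iota=\mu$ for the isomorphism $\iota$ already written down in the previous lemma, which is a single $2\times 2$ or $4\times 4$ matrix identity.

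First I would dispatch \eqref{eq:tt_P1+} and \eqref{eq:tt_R+}, which are essentially formal: \eqref{eq:tt_P1+} follows from $(-1)^2=1$, and \eqref{eq:tt_R+}, which merely records that the chosen symmetry on $[e^{\vec\gamma}]^+$ is independent of the choice of representative of the pair $\{\vec\gamma,-\vec\gamma\}$, holds because the matrix $\mm{0&1\\1&0}$ implementing \eqref{eq:tt_R} squares to the identity, so that $\iota^\vee\nu\iota=\mm{0&1\\1&0}^3=\mm{0&1\\1&0}$. For \eqref{eq:tt_P2+} I would note that the form carried by $\delta^+\otimes[e^{\vec\gamma}]^+$ is $-\mm{0&1\\1&0}$ and compute that conjugating $\mm{0&1\\1&0}$ by the diagonal matrix $\mm{1&0\\0&-1}$ of the previous lemma yields exactly $-\mm{0&1\\1&0}$. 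For \eqref{eq:tt_P3+} I would order the source basis as $v_1\otimes v_1,\ v_{-1}\otimes v_{-1},\ v_1\otimes v_{-1},\ v_{-1}\otimes v_1$, observe that in this ordering the tensor form $\mm{0&1\\1&0}\otimes\mm{0&1\\1&0}$ becomes $\mm{0&1\\1&0}\perp\mm{0&1\\1&0}$, and check that the isomorphism \eqref{eq:tt_P3} carries the first hyperbolic plane isometrically onto $[e^{\vec\gamma+\vec\kappa}]^+$ and the second onto $[e^{\vec\gamma-\vec\kappa}]^+$, each with its standard form.

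For \eqref{eq:tt_L+} I would first identify the underlying representation: $\Lambda^2[e^{\vec\gamma}]$ is one-dimensional with generator $v_1\wedge v_{-1}$; the torus fixes this vector since the characters $\vec\gamma$ and $-\vec\gamma$ cancel, and the generator of $\Z/2$ sends it to $v_{-1}\wedge v_1=-v_1\wedge v_{-1}$, so $\Lambda^2[e^{\vec\gamma}]\cong\delta$. The determinant formula above then evaluates the induced form on the generator as $\det\mm{0&1\\1&0}=-1$, giving $\Lambda^2([e^{\vec\gamma}]^+)\cong(\delta,(-1))=\delta^+$, as claimed.

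I do not expect a genuine obstacle here: the content is bookkeeping of signs in small matrices, and the most error-prone single step is \eqref{eq:tt_P3+}, which is why I would commit to the reordered basis before computing. The only point that warrants an explicit remark is that $\vec\gamma$ and $\vec\kappa$ are allowed to vanish, so that $[e^{\vec 0}]^+$, carrying the form $\mm{0&1\\1&0}$, can occur as a source or target; since none of the computations above use $\vec\gamma\neq 0$ or $\vec\kappa\neq 0$, they apply verbatim, and the resulting identities are automatically consistent with the relation $[e^{\vec 0}]^+=(2)\cdot 1^+ + (2)\cdot\delta^+$ recorded after \Cref{tt:symmetries}.
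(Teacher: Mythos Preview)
Your proposal is correct and follows essentially the same route as the paper: a direct verification of the isometry condition \(\iota^\vee\nu\iota=\mu\) in the explicit basis \(v_1,v_{-1}\), treating each of \eqref{eq:tt_P1+}--\eqref{eq:tt_L+} as a small matrix identity. The only cosmetic difference is your reordering of the tensor basis in \eqref{eq:tt_P3+} so that the source form becomes block-diagonal, which slightly streamlines the \(4\times 4\) check the paper carries out explicitly.
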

\begin{proof}
These are claims are about symmetries on vector spaces, which can be checked in a basis. \eqref{eq:tt_P1+} is clear. \eqref{eq:tt_R+} and \eqref{eq:tt_P2+} and boil down to the following two identities:
\begin{alignat*}{5}
\mm{0 & 1 \\ 1 & 0}\;
& = & {^t\!\mm{0 & 1 \\ 1 & 0}} &\mm{0 & 1 \\ 1 & 0} \mm{0 & 1 \\ 1 & 0} \\
\underbrace{\mm{0 & -1 \\ -1 & 0}}_{\mathclap{\substack{\text{symmetry}\\\text{on LHS}}}}
& = & \;{^t\!\mm{1 & 0 \\ 0 & -1}}&\underbrace{\mm{0 & 1 \\ 1 & 0}}_{\mathclap{\substack{\text{symmetry}\\\text{on RHS}}}} \mm{1 & 0 \\ 0 & -1}
\end{alignat*}
For \eqref{eq:tt_P3+}, we choose the following bases:
\begin{alignat*}{12}
  &v_1\otimes v_1,\quad &&v_1\otimes v_{-1}, \quad &&v_{-1}\otimes v_1,\quad &&v_{-1}\otimes v_{-1} &&\in & [e^{\vec\gamma}]&\otimes [e^{\vec\kappa}]\\
  &(v_1, 0),       &&(v_{-1}, 0),       &&(0, v_1),             &&(0, v_{-1})          &&\in & [e^{\vec\gamma + \vec\kappa}] &\oplus [e^{\vec\gamma - \vec\kappa}]
\end{alignat*}
Then the symmetry on \([e^{\vec\gamma}]^+\otimes [e^{\vec\kappa}]^+\), the symmetry on \([e^{\vec\gamma + \vec\kappa}] \perp [e^{\vec\gamma - \vec\kappa}]\) and the isomorphism \eqref{eq:tt_P3} are represented by the following three matrices \(Q_L\), \(Q_R\) and \(I\), which do indeed satisfy the required identity \(Q_L = {^t I}Q_R I\):
\[Q_L = \mm{0 & 0 & 0 & 1 \\ 0 & 0 & 1 & 0 \\ 0 & 1 & 0 & 0 \\ 1 & 0 & 0 & 0},\quad
  Q_R = \mm{0 & 1 & 0 & 0 \\ 1 & 0 & 0 & 0 \\ 0 & 0 & 0 & 1 \\ 0 & 0 & 1 & 0},\quad
  I = \mm{1 & 0 & 0 & 0 \\ 0 & 0 & 0 & 1 \\ 0 & 1 & 0 & 0 \\ 0 & 0 & 1 & 0}
\]
Finally, \(\Lambda^2([e^{\vec\gamma}]^+)\) has basis \(v_1\wedge v_{-1}\), on which \(1\in \Z/2\) acts as as \(-1\), and symmetry \(\det\mm{0 & 1 \\ 1 & 0} = -1\).
So \(\Lambda^2([e^{\vec\gamma}]\) is isometric to \(\delta^+ = (\delta, -1)\), as claimed.
\end{proof}

\begin{prop}\label{ST-GW-special}
  \(\GWRep{T\rtimes\Z/2}\) is a \(\lambda\)-ring.
\end{prop}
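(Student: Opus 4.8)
The plan is to deduce this from the Line Generation Lemma (\cref{lem:special-via-line-generators}). By \cref{prop:lambda-on-GW} the ring $\GW(T\rtimes\Z/2)$ is a pre-$\lambda$-ring with a positive structure, and by \cref{lem:line-special} it is line-special, so it suffices to exhibit a subset $E\subset\GW(T\rtimes\Z/2)$ over which the ring is generated by line elements and for which the $\lambda$-identities \eqref{eq:special-1} and \eqref{eq:special-2} hold.

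I would take $E:=\{1^+\}\cup\{\,[e^{\vec\gamma}]^+\mid\vec\gamma\in T^*,\ \vec\gamma\ne 0\,\}$. By \cref{tt:symmetries} every element of $\GW(T\rtimes\Z/2)$ is a $\GW(F)$-linear combination of $1^+$, $\delta^+$ and the $[e^{\vec\gamma}]^+$; expanding each $\GW(F)$-coefficient as a $\Z$-linear combination of one-dimensional forms $(a)$ then writes an arbitrary element as a sum of terms $(a)\cdot 1^+$, $(a)\cdot\delta^+$ and $(a)\cdot[e^{\vec\gamma}]^+$. The first two types are line elements (they are symmetric characters of $T\rtimes\Z/2$), and $(a)\cdot[e^{\vec\gamma}]^+=\bigl((a)\cdot 1^+\bigr)\cdot[e^{\vec\gamma}]^+$; hence $\GW(T\rtimes\Z/2)$ is generated by line elements over $E$. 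Moreover each element of $E$ is a positive element of rank at most two, so by \cref{lambda-identities-for-low-rank-elements}, part~\eqref{lambda-identities-for-rank-two}, both $\lambda$-identities for a given pair $x,y\in E$ follow once the three explicit identities displayed there are checked. If one of $x,y$ equals $1^+$ these hold by line-specialness and parts \eqref{lambda-identities-for-rank-one}, \eqref{lambda-identities-vs-line-special} of the same lemma, so the one case with content is the pair $x=[e^{\vec\gamma}]^+$, $y=[e^{\vec\kappa}]^+$ with $\vec\gamma,\vec\kappa\ne 0$.

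For this pair I would compute both sides of each identity directly in the free $\GW(F)$-module $\GW(T\rtimes\Z/2)$ using \cref{lem:tt:isometries}. One has $\lambda^2x=\lambda^2y=\delta^+$ by \eqref{eq:tt_L+} and $\lambda^ix=0=\lambda^iy$ for $i\ge 3$, so that $\lambda_t([e^{\vec\nu}]^+)=1+[e^{\vec\nu}]^+t+\delta^+t^2$; together with $(\delta^+)^2=1^+$ from \eqref{eq:tt_P1+}, $\delta^+\cdot[e^{\vec\nu}]^+=[e^{\vec\nu}]^+$ from \eqref{eq:tt_P2+} and $[e^{\vec\alpha}]^+\cdot[e^{\vec\beta}]^+=[e^{\vec\alpha+\vec\beta}]^++[e^{\vec\alpha-\vec\beta}]^+$ from \eqref{eq:tt_P3+}, multiplying out $\lambda_t(xy)=\lambda_t([e^{\vec\gamma+\vec\kappa}]^+)\,\lambda_t([e^{\vec\gamma-\vec\kappa}]^+)$ gives $\lambda^2(xy)=2\delta^++[e^{2\vec\gamma}]^++[e^{2\vec\kappa}]^+$, $\lambda^3(xy)=xy$ and $\lambda^4(xy)=1^+$. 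On the right-hand sides one simplifies $(x^2-2\delta^+)\delta^+$ to $x^2-2\cdot 1^+$ (using $(\delta^+)^2=1^+$ and $x\delta^+=x$), symmetrically for $y$, and substitutes $x^2=[e^{2\vec\gamma}]^++[e^{\vec 0}]^+$ (a further use of \eqref{eq:tt_P3+}); then the $k=3$ and $k=4$ identities fall out at once, while the $k=2$ identity reduces to the single relation
\[
2[e^{\vec 0}]^+=2\cdot 1^++2\cdot\delta^+\qquad\text{in }\GW(T\rtimes\Z/2).
\]
This relation is the one genuinely delicate point of the argument: it does \emph{not} follow by formal manipulation, but from the identity $[e^{\vec 0}]^+=(2)\cdot 1^++(2)\cdot\delta^+$ recorded after \cref{tt:symmetries} together with the $2$-torsion relation $2\cdot(2)=2\cdot(1)$ in $\GW(F)$ --- which holds because the binary form $(1)\perp(1)$ represents $2$ and is therefore isometric to $(2)\perp(2)$. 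Granting this, \cref{lem:special-via-line-generators} shows that $\GW(T\rtimes\Z/2)$ is a $\lambda$-ring.
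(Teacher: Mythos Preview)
Your proof is correct and follows essentially the same route as the paper's: reduce via \cref{lem:special-via-line-generators} and line-specialness to the pair $x=[e^{\vec\gamma}]^+$, $y=[e^{\vec\kappa}]^+$, compute both sides of the three identities in \cref{lambda-identities-for-low-rank-elements}\,\eqref{lambda-identities-for-rank-two} using \cref{lem:tt:isometries}, and observe that the only non-formal input is the relation $2\cdot 1^+ + 2\delta^+ = 2[e^{\vec 0}]^+$, which follows from $[e^{\vec 0}]^+=(2)\cdot 1^+ + (2)\cdot\delta^+$ together with the isometry $\mm{1&0\\0&1}\cong\mm{2&0\\0&2}$ over any field of characteristic not two. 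Your choice to put $\delta^+$ among the line elements rather than into $E$ is a harmless cosmetic variation.
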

\begin{proof}
  By \cref{lem:special-via-line-generators}, it suffices to check the \(\lambda\)-identities for the symmetric representations listed in \Cref{tt:symmetries}.  As all of these are of rank one or two, these identities boil down to the equations given in part~\eqref{lambda-identities-for-rank-two} of \Cref{lambda-identities-for-low-rank-elements}.  By part~\eqref{lambda-identities-vs-line-special} of the same \namecref{lambda-identities-for-low-rank-elements} and the fact that \(\GWRep{T\rtimes\Z/2}\) is line-special (\Cref{lem:line-special}), we can concentrate on the case \(x=[e^{\vec\gamma}]^+\), \(y=[e^{\vec\kappa}]^+\).  So it suffices to check the following three identities in \(\GWRep{T\rtimes\Z/2}\):
  \[
  \begin{aligned}
   (1)\quad \lambda^2([e^{\vec\gamma}]^+ \cdot [e^{\vec\kappa}]^+)
   &= \left(([e^{\vec\gamma}]^+)^2 - 2\lambda^2[e^{\vec\gamma}]^+\right)\cdot\lambda^2[e^{\vec\kappa}]^+\\
   &\quad+ \left(([e^{\vec\kappa}]^+)^2-2\lambda^2[e^{\vec\kappa}]^+\right)\cdot\lambda^2 [e^{\vec\gamma}]^+ \\
   &\quad+ 2\lambda^2[e^{\vec\gamma}]^+\cdot\lambda^2 [e^{\vec\kappa}]^+ \\
   (2)\quad \lambda^3([e^{\vec\gamma}]^+ \cdot [e^{\vec\kappa}]^+) &= [e^{\vec\gamma}]^+ \cdot [e^{\vec\kappa}]^+\cdot\lambda^2[e^{\vec\gamma}]^+\cdot\lambda^2 [e^{\vec\kappa}]^+\\
   (3)\quad \lambda^4([e^{\vec\gamma}]^+ \cdot [e^{\vec\kappa}]^+) &= (\lambda^2[e^{\vec\gamma}]^+)^2\cdot(\lambda^2[e^{\vec\kappa}]^+)^2
  \end{aligned}
  \]
Using the lemmas above, we see that both sides of $(3)$ equate to \(1^+\) and that both sides of $(2)$ equate to \([e^{\vec\kappa + \vec\gamma}]^+ + [e^{\vec\kappa - \vec\gamma}]^+\).  Equation~$(1)$ simplifies to
\[
   [e^{2\vec \gamma}]^+ + [e^{2\vec \kappa}]^+ + 2\delta^+  = [e^{2\vec \gamma}]^+ + [e^{2 \vec \kappa}]^+ + 2[e^{\vec 0}]^+ - 2\cdot 1^+.
\]
Thus, it suffices to show that
\[
2\cdot 1^+  + 2\delta^+ = 2[e^{\vec 0}]^+
\]
in \(\GWRep{T\rtimes\Z/2}\). As indicated below \Cref{tt:symmetries}, \([e^{\vec 0}]^+ = (2)\cdot 1^+ + (2) \cdot \delta^+\), so we can rewrite this equation as
\[
\mm{1 & 0 \\ 0 & 1}\cdot 1^+ + \mm{1 & 0 \\ 0 & 1}\cdot\delta^+ = \mm{2 & 0 \\ 0 & 2}\cdot 1^+ + \mm{2 & 0 \\ 0 & 2}\cdot \delta^+.
\]
This equation does indeed hold in \(\GWRep{T\rtimes\Z/2}\) since the non-degenerate symmetric forms \(\mm{1 & 0 \\ 0 & 1}\) and \(\mm{2 & 0 \\ 0 & 2}\) are isometric over any field of characteristic not two via the isometry \(\mm{1 & 1 \\ 1 & -1}\).
\end{proof}

\begin{rem*}
  When the ground field \(k\) contains a square root of \(2\), the representation \([e^{\vec 0}]^+\) is isometric to \(1^+ \perp \delta^+\).  In this case,  \Cref{ST-GW-special} can alternatively be proved without explicitly checking the \(\lambda\)-identities.  Namely, it then follows from \Cref{lem:tt:isometries} that the subgroup
  \[
  \GWRep{T\rtimes\Z/2}^+ := \Z\vecspan{1^+, \delta^+ , [e^{\vec\gamma}]^+}_{\substack{\{\vec\gamma,-\vec\gamma\}\\\vec\gamma\neq 0}}
  \]
  is closed under products and exterior powers and is thus a sub-pre-\(\lambda\)-ring of \(\GWRep{T\rtimes\Z/2}\).
  This subring maps isomorphically to the \(\lambda\)-ring \(\KRep{T\rtimes\Z/2}\) via the forgetful map and is thus itself a \(\lambda\)-ring. On the other hand, \(\GWRep{T\rtimes\Z/2}\) is generated over \(\GWRep{T\rtimes\Z/2}^+\) by line elements, so we can conclude via \cref{lem:special-via-line-generators}.
\end{rem*}

\subsection{Representations of split special orthogonal groups}
Let \(\SO{2n}\) and \(\SO{2n+1}\) be the split orthogonal groups defined by the standard symmetric forms \eqref{eq:standard-symmetric-forms} (\Cref{sec:symreps}).
The aim of this section is to show that the symmetric representation rings of these groups are \(\lambda\)-rings.
We begin by explaining our notation and conventions.

First, note that the two groups share a maximal torus \(T\subset \SO{2n}\subset \SO{2n+1}\) which we can write as
\[
\begin{aligned}
T(A) &= \left\{\diag{\mm{a_1 & 0\\ 0 & a_1^{-1}},\dots, \mm{a_n & 0 \\ 0 & a_n^{-1}}} \quad \;\middle\vert\; a_i\in A^\times\right\}\\
\quad\text{ respectively }\quad
T(A) &= \left\{\diag{\mm{a_1 & 0 \\ 0 & a_1^{-1}},\dots, \mm{a_n & 0 \\ 0 & a_n^{-1}}, 1} \;\middle\vert\; a_i\in A^\times \right\}.
\end{aligned}
\]
Let
\[
  \vec \gamma = (\gamma_1,\dots,\gamma_n) \in T^*
\]
denote the character \(T\to\Gm\) that sends an element of \(T(A)\) of the form indicated to the product \(a_1^{\gamma_1}\cdot\cdots\cdot a_n^{\gamma_n}\).
Then with respect to the usual choices, the dominant characters for \(\SO{m}\) are described by the conditions
\[\begin{cases}
  \gamma_1 \geq \gamma_2 \geq \cdots \geq \gamma_{n-1} \geq \gamma_n \geq 0 & (m = 2n+1)\\
  \gamma_1 \geq \gamma_2 \geq \cdots \geq \gamma_{n-1} \geq \abs{\gamma_n}  & (m = 2n)\\
\end{cases}\]
We refer to these as \(2n\)- and \((2n+1)\)-dominant, respectively.
Given any \(\vec \gamma\in T^*\), we define
\[
\vec \gamma^- := (\gamma_1,\dots,\gamma_{n-1},-\gamma_n).
\]

\begin{lem}\label{lem:dominant}\hfill\\
If \(\gamma_n\geq 0\), then \(\vec\gamma\) is \(2n\)-dominant if and only if it is \((2n+1)\)-dominant.\\
If \(\gamma_n\leq 0\), then \(\vec\gamma\) is \(2n\)-dominant if and only if \(\vec \gamma^-\) is \((2n+1)\)-dominant.
\qed
\end{lem}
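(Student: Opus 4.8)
The plan is to unwind the two notions of dominance directly from their definitions and to split the argument according to the sign of \(\gamma_n\). Recall that \(\vec\gamma\) is \((2n+1)\)-dominant precisely when \(\gamma_1 \geq \cdots \geq \gamma_{n-1} \geq \gamma_n \geq 0\), while it is \(2n\)-dominant precisely when \(\gamma_1 \geq \cdots \geq \gamma_{n-1} \geq \abs{\gamma_n}\). In both conditions the chain of inequalities \(\gamma_1 \geq \cdots \geq \gamma_{n-1}\) among the first \(n-1\) coordinates occurs verbatim, so the only thing to examine is the relation between \(\gamma_{n-1}\), \(\gamma_n\) and \(0\).

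First I would treat the case \(\gamma_n \geq 0\). Here \(\abs{\gamma_n} = \gamma_n\), so \(2n\)-dominance reads \(\gamma_1 \geq \cdots \geq \gamma_{n-1} \geq \gamma_n\); since \(\gamma_n \geq 0\) holds by hypothesis, this is equivalent to \(\gamma_1 \geq \cdots \geq \gamma_{n-1} \geq \gamma_n \geq 0\), which is exactly \((2n+1)\)-dominance. Next, in the case \(\gamma_n \leq 0\) one has \(\abs{\gamma_n} = -\gamma_n\), so \(2n\)-dominance of \(\vec\gamma\) reads \(\gamma_1 \geq \cdots \geq \gamma_{n-1} \geq -\gamma_n\). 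Since \(\vec\gamma^- = (\gamma_1,\dots,\gamma_{n-1},-\gamma_n)\) and \(-\gamma_n \geq 0\) by hypothesis, this is precisely the condition \(\gamma_1 \geq \cdots \geq \gamma_{n-1} \geq -\gamma_n \geq 0\) defining \((2n+1)\)-dominance of \(\vec\gamma^-\).

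There is no genuine obstacle here; the only point worth noting is that the auxiliary inequality ``\(\geq 0\)'' built into the definition of \((2n+1)\)-dominance is automatically satisfied under the relevant sign hypothesis on \(\gamma_n\), which is exactly why the sign of \(\gamma_n\) dictates whether one compares \(\vec\gamma\) directly with \(\vec\gamma\) or with its reflection \(\vec\gamma^-\). The overlap \(\gamma_n = 0\), where \(\vec\gamma = \vec\gamma^-\), is consistently covered by either statement.
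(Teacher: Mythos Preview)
Your argument is correct and is exactly the routine verification the paper has in mind: the lemma is stated with an immediate \qed and no further proof, since it follows by unwinding the two dominance conditions and observing that they differ only in whether one requires \(\gamma_{n-1}\geq \gamma_n\geq 0\) or \(\gamma_{n-1}\geq \abs{\gamma_n}\). Your case split on the sign of \(\gamma_n\) and the remark about the overlap at \(\gamma_n=0\) are precisely the intended justification.
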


The usual partial order on \(T^*\) is given by
\[
\begin{aligned}
  \vec \gamma' \preceq \vec \gamma
  &\phantom{:}\Leftrightarrow
  \begin{cases}
    \ltext{8cm}{\((\vec \gamma', \vec \gamma)\) satisfy conditions $(C_1)$--$(C_n)$ } & (m=2n+1) \\
    \ltext{8cm}{\((\vec \gamma', \vec \gamma)\) satisfy conditions $(C_1)$--$(C_n)$ and $(C_n^-)$ } & (m=2n) \\
  \end{cases}
\end{aligned}
\]
where the conditions are:
\[
\begin{aligned}
  &(C_i)  & \gamma_1' + \cdots + \gamma_i' \phantom{-\gamma_n'} & \leq \phantom{-\gamma_n'} \gamma_1 + \cdots + \gamma_i\\
  &(C_n^-)  & \gamma_1' + \cdots + \gamma_{n-1}' - \gamma_n' &\leq \gamma_1 + \cdots + \gamma_{n-1} - \gamma_n\\
\end{aligned}
\]
We say that a weight \(\vec \gamma'\) is smaller than \(\vec \gamma\) and write \(\vec \gamma'\prec \vec \gamma\) when \(\vec\gamma' \preceq \vec\gamma \) and \( \vec\gamma' \neq \vec \gamma\). When it is not clear from the context, we say \(2n\)- or \((2n+1)\)-smaller to clarify which of the two orderings we are using. Note that any character \(\vec\gamma'\) which is \(2n\)-smaller than \(\vec\gamma\) is a fortiori \((2n+1)\)-smaller.
We will eventually need the following observation.
\begin{lem}\label{lem:minus-ordering}
    Let \(\vec\gamma\) and  \(\vec\gamma'\) be \((2n+1)\)-dominant.
    If \(\vec\gamma'\) is \((2n+1)\)-smaller than \(\vec\gamma^-\), then it is \(2n\)-smaller than \(\vec\gamma\).\qed
\end{lem}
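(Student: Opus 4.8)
The plan is a direct comparison of the defining inequalities for the two orderings. Abbreviate the partial sums $S_i := \gamma_1 + \cdots + \gamma_i$ and $S_i' := \gamma_1' + \cdots + \gamma_i'$. Since $\vec\gamma^-$ agrees with $\vec\gamma$ in the first $n-1$ coordinates, the $n$-th partial sum of $\vec\gamma^-$ is $S_{n-1} - \gamma_n = S_n - 2\gamma_n$, and the hypothesis that $\vec\gamma'$ is $(2n+1)$-smaller than $\vec\gamma^-$ unpacks as the inequalities $S_i' \leq S_i$ for $i = 1, \dots, n-1$ (conditions $(C_1)$--$(C_{n-1})$) together with $S_n' \leq S_n - 2\gamma_n$ (condition $(C_n)$ for $\vec\gamma^-$), plus $\vec\gamma' \neq \vec\gamma^-$.

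First I would check the conditions $(C_1)$--$(C_n)$ for the pair $(\vec\gamma', \vec\gamma)$. Conditions $(C_1)$--$(C_{n-1})$ are literally the inequalities just listed. For $(C_n)$, since $\vec\gamma$ is $(2n+1)$-dominant we have $\gamma_n \geq 0$, so $S_n' \leq S_n - 2\gamma_n \leq S_n$. Next I would check $(C_n^-)$, that is $S_{n-1}' - \gamma_n' \leq S_{n-1} - \gamma_n$: from $S_n' = S_{n-1}' + \gamma_n' \leq S_{n-1} - \gamma_n$ we get
\[
S_{n-1}' - \gamma_n' = (S_{n-1}' + \gamma_n') - 2\gamma_n' \leq (S_{n-1} - \gamma_n) - 2\gamma_n' \leq S_{n-1} - \gamma_n,
\]
the last step because $\vec\gamma'$ is $(2n+1)$-dominant, so $\gamma_n' \geq 0$. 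This establishes $\vec\gamma' \preceq \vec\gamma$ in the $2n$-ordering.

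It remains to upgrade $\preceq$ to $\prec$. If $\vec\gamma' = \vec\gamma$, then condition $(C_n)$ for $\vec\gamma^-$ reads $S_n \leq S_n - 2\gamma_n$, forcing $\gamma_n \leq 0$; combined with $\gamma_n \geq 0$ this gives $\gamma_n = 0$, hence $\vec\gamma^- = \vec\gamma = \vec\gamma'$, contradicting $\vec\gamma' \neq \vec\gamma^-$. So $\vec\gamma' \neq \vec\gamma$, and therefore $\vec\gamma'$ is $2n$-smaller than $\vec\gamma$.

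I do not anticipate a genuine obstacle here: the argument is a few lines of bookkeeping. The only points requiring care are keeping straight which ordering's list of conditions is being verified, and invoking $(2n+1)$-dominance (namely $\gamma_n \geq 0$ and $\gamma_n' \geq 0$) at precisely the two spots where it is needed.
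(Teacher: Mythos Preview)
Your proof is correct and is exactly the straightforward verification that the paper leaves to the reader (the lemma is stated with a bare \qedsymbol{} and no proof). The only content is unpacking the conditions $(C_i)$ and $(C_n^-)$ and using $(2n+1)$-dominance to get $\gamma_n \ge 0$ and $\gamma_n' \ge 0$, which you do correctly.
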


Given an \(m\)-dominant weight \(\vec\gamma\), we write \(V_{\vec\gamma}\) for the simple \(\SO{m}\)-module of highest weight~\(\vec\gamma\).
\begin{lem}\label{lem:SO-symmetries}\hfill\\
When \(m\) is odd or a multiple of four, all simple representations of \(\SO{m}\) are symmetric.\\
When \(m = 2n\) with \(n\) odd, all simple representations \(V_{\vec\gamma}\) with \(\gamma_n = 0\) are symmetric, while for \(\gamma_n \neq 0\) we have \(V_{\vec\gamma}^{\dual} \cong V_{\vec\gamma^-}\).
\end{lem}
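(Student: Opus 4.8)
The plan is to read off both statements from the standard description of the duality functor on the representation category of a split reductive group, and then, in the self-dual cases, to pin down that the invariant form can be chosen \emph{symmetric} rather than alternating. First I would recall that for a split reductive group the dual of the simple module $V_{\vec\gamma}$ of highest weight $\vec\gamma$ is again simple, with $V_{\vec\gamma}^\vee\cong V_{-w_0\vec\gamma}$, where $w_0$ is the longest element of the Weyl group, and that $-w_0$ is determined by the Dynkin diagram. For the groups at hand it is the identity in type $\mathrm B_n$ (hence for all odd $m$) and in type $\mathrm D_n$ with $n$ even (hence for all $m\equiv 0\bmod 4$), whereas for type $\mathrm D_n$ with $n$ odd it is the non-trivial diagram automorphism, which acts on the weight lattice exactly by $\vec\gamma\mapsto\vec\gamma^-$. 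This yields the duality assertions at once: $V_{\vec\gamma}^\vee\cong V_{\vec\gamma}$ whenever $m$ is odd or $4\mid m$, and $V_{\vec\gamma}^\vee\cong V_{\vec\gamma^-}$ when $m=2n$ with $n$ odd --- so in the latter case $V_{\vec\gamma}$ is self-dual precisely when $\gamma_n=0$.

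It remains, in the self-dual cases, to produce a non-degenerate symmetric invariant form. Since $\End_{\SO{m}}(V_{\vec\gamma})=F$ --- a property of reductive groups recalled around \Cref{GW-of-Rep:simplified} --- the invariant bilinear forms on $V_{\vec\gamma}$ span a one-dimensional space, and as $2$ is invertible any such form is either symmetric or alternating; so it suffices to exhibit \emph{one} non-degenerate symmetric invariant form. Here I would use that $V_{\vec\gamma}$ occurs with multiplicity one inside a tensor product of exterior powers of the standard symmetric representation $F^m$ of $\SO{m}$: reading off the column lengths $c_1\geq c_2\geq\cdots$ of $\vec\gamma$, it occurs in $\bigotimes_i\Lambda^{c_i}(F^m)$, where in type $\mathrm D$ a column of length $n$ contributes, in place of the reducible $\Lambda^n(F^m)$, whichever of its two summands $V_{(1,\dots,1)}$, $V_{(1,\dots,1,-1)}$ matches the sign of $\gamma_n$. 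Every factor occurring here is orthogonal: $\Lambda^c(F^m)$ with $c<n$ carries the symmetric form $\Lambda^c$ of the standard symmetric form on $F^m$ (exterior powers of symmetric forms are symmetric, \Cref{sec:symreps}), while the two summands of $\Lambda^n(F^m)$ are non-isomorphic and self-dual, so the symmetric form on $\Lambda^n(F^m)$ restricts block-diagonally to a non-degenerate symmetric form on each. Thus $\bigotimes_i\Lambda^{c_i}(F^m)$ is orthogonal and contains the self-dual module $V_{\vec\gamma}$ with odd multiplicity; a one-line linear-algebra argument --- an invariant form on an isotypic summand $V_{\vec\gamma}^{\oplus d}$ has the shape $\beta\otimes C$ for a $d\times d$ matrix $C$ that is forced to be invertible, so that an alternating $\beta$ would force $d$ even --- then forces the invariant form on $V_{\vec\gamma}$ itself to be symmetric.

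The main obstacle is exactly this last point, ruling out the alternating (symplectic) case. In characteristic zero it amounts to the computation just sketched --- equivalently, to the classical Frobenius--Schur calculation showing that the central element of $\mathrm{Spin}_m$ which detects symplectic type acts trivially on every representation of $\SO{m}$ --- and may simply be quoted from the standard references. In positive characteristic $\Rep(\SO{m})$ is no longer semisimple, so the multiplicity bookkeeping must be adapted; one then appeals to the modular analogue of the Frobenius--Schur statement, or works directly with the (symmetric) contravariant form on the Weyl module of highest weight $\vec\gamma$ and its simple quotient $V_{\vec\gamma}$. The residual case distinctions --- $\gamma_n<0$ in type $\mathrm D$ with $n$ even, which one may instead reduce to $\gamma_n>0$ by transporting along the outer automorphism of $\SO{2n}$, and the splitting of $\Lambda^n(F^m)$ itself --- cost some bookkeeping but nothing more, once orthogonality of the building blocks $\Lambda^c(F^m)$, $V_{(1,\dots,1)}$, $V_{(1,\dots,1,-1)}$ is in hand.
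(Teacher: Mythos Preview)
Your first paragraph is exactly what the paper does: invoke \(V_{\vec\gamma}^\vee\cong V_{-w_0\vec\gamma}\) and identify \(-w_0\) type by type. The divergence is in how you rule out the alternating case. You embed \(V_{\vec\gamma}\) as a multiplicity-one constituent of a tensor product of exterior powers of the standard module and run the \(\beta\otimes C\) argument on the isotypic component. The paper instead \emph{restricts} \(V_{\vec\gamma}\) to the extended torus \(\subsymT{m}=T\rtimes\Z/2\) constructed just below the lemma: since every \(\subsymT{m}\)-module is semisimple, the restriction contains the simple symmetric \(\subsymT{m}\)-module \([e^{\vec\gamma}]=e^{\vec\gamma}\oplus e^{-\vec\gamma}\) as a direct summand with multiplicity one (this multiplicity is the dimension of the highest-weight space, hence one in any characteristic), and the same \(\beta\otimes C\) argument with \(d=1\) forces the form to be symmetric.

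The practical difference is precisely the characteristic issue you flag. Your ambient symmetric module \(\bigotimes_i\Lambda^{c_i}(F^m)\) lives in \(\Rep(\SO{m})\), which is not semisimple in positive characteristic, so the isotypic-summand step and the multiplicity-one claim both require the extra work you gesture at but do not carry out (contravariant forms on Weyl modules, modular Frobenius--Schur). The paper sidesteps this entirely by moving the multiplicity-one argument into \(\Rep(\subsymT{m})\), which \emph{is} semisimple over any field; the whole proof is then three lines and uniform in the characteristic. So your approach is correct in characteristic zero and morally correct in general, but as written it leaves the positive-characteristic case as a promissory note; the paper's restriction trick is what actually discharges it.
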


One way to prove this lemma is to consider the restriction from \(\SO{m}\) to an appropriate extended torus \(T\rtimes\Z/2\). We can define such tori by specifying generators \(\tau\) of \(\Z/2\):
\begin{compactitem}
\item For \(m = 2n+1\), let
  \[
  \tau := \mm{
    0&1& & & & & \\
    1&0& & & & & \\
    & &\dots&&&&\\
    & & &\dots&&&\\
    & & & &0&1& \\
    & & & &1&0& \\
    & & & & & &(-1)^n\\
  }.
  \]
  The conjugation action of \(\tau\) on \(T\) is multiplicative inversion, so the subgroup generated by \(\tau\) and \(T\) is indeed an extended torus \(T\rtimes\Z/2\).
\item For \(m=2n\) with \(n\) even, let
  \[
  \tau := \mm{
    0&1& & & & \\
    1&0& & & & \\
    & &\dots&&&\\
    & & &\dots&&\\
    & & & &0&1\\
    & & & &1&0}.
  \]
  Then, as in the previous case, the subgroup generated by \(\tau\) and \(T\) is an extended torus \(T\rtimes\Z/2\).
\item For \(m=2n\) with \(n\) odd, let
  \[
  \tau := \mm{
    0&1& & & & & & \\
    1&0& & & & & & \\
    & &\dots&&&&& \\
    & & &\dots&&&& \\
    & & & &0&1& & \\
    & & & &1&0& & \\
    & & & & & &1& \\
    & & & & & & &1
  }.
  \]
  Write \(T\) as \(T'\times\Gm\). The element \(\tau\) acts by multiplicative inversion on \(T'\) and trivially on \(\Gm\), so the subgroup generated by \(\tau\) and \(T\) has the form \((T'\rtimes\Z/2)\times\Gm\).
\end{compactitem}

\newcommand{\subsymT}[1]{ST_{#1}}
We thus have closed subgroups
\begin{alignat*}{7}
  &\subsymT{2n+1} &&:= T\rtimes\Z/2 &&\hookrightarrow \SO{2n+1}\\
  &\subsymT{2n} &&:= T\rtimes\Z/2 &&\hookrightarrow \SO{2n} \quad\text{ for even } n\\
  &\subsymT{2n} &&:= (T'\rtimes\Z/2)\times \Gm &&\hookrightarrow \SO{2n} \quad\text{ for odd } n
\end{alignat*}

\begin{proof}[Proof of \Cref{lem:SO-symmetries}]
By \cite{Jantzen}*{Cor.~II.2.5}, \(V_{\vec\gamma}^\dual \cong V_{-w_0\vec\gamma}\).

When \(m\) is odd or divisible by four, \(-w_0\) is the identity, so all simple \(\SO{m}\)-modules are self-dual.
The restriction of \(V_{\vec\gamma}\) to \(\subsymT{m}\) contains the symmetric simple \(\subsymT{m}\)-module \([e^{\vec\gamma}] = e^{\vec\gamma}\oplus e^{\vec\gamma-}\) as a direct summand with multiplicity one, so the duality on \(V_{\vec\gamma}\) must be symmetric.

When \(m=2n\) with \(n\) odd, \(-w_0\) acts as \(\vec\gamma\mapsto \vec\gamma^-\).
Thus, for a dominant character \(\vec\gamma\) the \(\SO{m}\)-module \(V_{\vec\gamma}\) is dual to \(V_{\vec\gamma^-}\). In the case \(\gamma_n= 0\) we can argue as before.
\end{proof}

The restriction \(\KRep{\SO{m}} \to \KRep{\subsymT{m}}\) is a monomorphism since its composition with the further restriction to \(T\) is.
We claim that the same is true for Grothendieck-Witt groups.

\begin{mdprop}\label{SO-GW-special}
  The morphism induced by restriction
  \begin{equation*}
  \GWRep{\SO{m}} \xrightarrow{i^*} \GWRep{\subsymT{m}}
  \end{equation*}
  is injective.
  In particular, \(\GWRep{\SO{m}}\) is a  \(\lambda\)-ring.
\end{mdprop}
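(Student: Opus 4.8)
The strategy is to compare the explicit $\GW(F)$-module bases of $\GW(\SO m)$ and $\GW(\subsymT m)$ supplied by \Cref{GW-of-Rep:simplified}, and to show that in these bases $i^*$ is ``triangular'' with respect to the dominance order on highest weights, with an invertible coefficient in the leading position. First I would record the bases. Combining \Cref{GW-of-Rep:simplified} with \Cref{lem:SO-symmetries}: when $m$ is odd or divisible by four every simple $\SO m$-module $V_{\vec\gamma}$ is symmetric, so $\GW(\SO m)\cong\bigoplus_{\vec\gamma}\GW(F)\cdot(V_{\vec\gamma},\sigma_{\vec\gamma})$ over $m$-dominant $\vec\gamma$; when $m=2n$ with $n$ odd the symmetric summands are indexed by the $\vec\gamma$ with $\gamma_n=0$, and in addition there is a summand $\Z\cdot H(V_{\vec\gamma})$ for each non-self-dual pair $\{V_{\vec\gamma},V_{\vec\gamma^-}\}$. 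On the extended-torus side, \Cref{tt:symmetries} (and the analogous description of $\GW\bigl((T'\rtimes\Z/2)\times\Gm\bigr)$) provides a $\GW(F)$-basis consisting of $1^+$, $\delta^+$, the elements $[e^{\vec\mu}]^+$ indexed by $\pm$-pairs of nonzero characters, together with hyperbolic generators of the non-self-dual summands when $\subsymT m=(T'\rtimes\Z/2)\times\Gm$.

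Next I would compute the leading term of $i^*$ on a basis element. The key input is elementary highest-weight theory: all $T$-weights of $V_{\vec\gamma}$ satisfy $\vec\mu\preceq\vec\gamma$, and both $\vec\gamma$ and the extremal weight $w_0\vec\gamma$ occur with multiplicity one; moreover, since $-w_0\vec\gamma$ is $\vec\gamma$ or $\vec\gamma^-$, the $\Z/2$-action in $\subsymT m$ carries the $\vec\gamma$-weight line onto the $w_0\vec\gamma$-weight line. Hence $i^*V_{\vec\gamma}$ contains the simple $\subsymT m$-module $[e^{\vec\gamma}]$ --- respectively $[e^{(\gamma_1,\dots,\gamma_{n-1})}]\otimes e^{\gamma_n}$ when $m=2n$, $n$ odd and $\gamma_n\neq0$ --- with multiplicity exactly one, while every other constituent corresponds to a character whose dominant conjugate is strictly $\preceq\vec\gamma$; this is where \Cref{lem:dominant,lem:minus-ordering} and the order conditions $(C_i)$ and $(C_n^-)$ enter, to pass between the $2n$- and $(2n+1)$-dominance orders. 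Decomposing the restricted \emph{symmetric} module as in \eqref{eq:semisimple-decomp}, and using that the non-degenerate symmetry pairs the $\vec\gamma$- and $w_0\vec\gamma$-weight lines perfectly (no other weight being $\pm\vec\gamma$), one finds that the $[e^{\vec\gamma}]^+$-component of $i^*(V_{\vec\gamma},\sigma_{\vec\gamma})$ is $\langle c\rangle\cdot[e^{\vec\gamma}]^+$ for some $c\in F^\times$ --- a \emph{unit} of $\GW(F)$ times the corresponding generator --- and in the hyperbolic case the analogous statement holds with coefficient $1\in\Z$.

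Injectivity then follows by the usual triangularity argument: given an element $\sum a_{\vec\gamma}(V_{\vec\gamma},\sigma_{\vec\gamma})+\sum b_{\vec\delta}H(V_{\vec\delta})$ in $\ker i^*$, pick a dominance-maximal weight $\vec\gamma$ with a non-zero coefficient; since $[e^{\vec\gamma}]^+$ (or the relevant hyperbolic generator) can occur in $i^*$ of a basis element $V_{\vec\gamma'}$ only if $\vec\gamma$ is a weight of $V_{\vec\gamma'}$, forcing $\vec\gamma\preceq\vec\gamma'$, the $[e^{\vec\gamma}]^+$-component of $i^*(\sum\cdots)$ equals that one coefficient times a unit and hence vanishes, a contradiction. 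For the final sentence, $\GW(\subsymT m)$ is a $\lambda$-ring by \Cref{ST-GW-special} (together with the corresponding statement for $(T'\rtimes\Z/2)\times\Gm$), so the embedding criterion recalled in \Cref{sec:lambda-rings} applies to the monomorphism $i^*$. I expect the main obstacle to be organisational rather than conceptual: fixing compatible orderings on the two bases, matching the symmetric / anti-symmetric / non-self-dual summand types across $i^*$, and --- in the case $m=2n$ with $n$ odd --- tracking how the pair $\{V_{\vec\gamma},V_{\vec\gamma^-}\}$ and its hyperbolic class restrict, which is exactly what \Cref{lem:SO-symmetries,lem:dominant,lem:minus-ordering} are designed to control. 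A secondary point needing care is checking that the leading coefficient is a genuine unit of $\GW(F)$, equivalently that the component form $\phi_{[e^{\vec\gamma}]}$ is of rank one rather than zero.
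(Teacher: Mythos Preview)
Your approach is essentially the same as the paper's: both use the \(\GW(F)\)-module bases of \Cref{GW-of-Rep:simplified} and \Cref{tt:symmetries}, establish that \(i^*\) is upper-triangular with respect to the dominance order with a rank-one (hence invertible) leading coefficient, and conclude by the maximal-weight contradiction argument and the embedding criterion. The paper in fact only writes out the case of odd \(m\), remarking that the even cases can be handled similarly but are not needed; your sketch of the additional cases (including the hyperbolic summands when \(m=2n\) with \(n\) odd, and the need for a \(\lambda\)-ring statement for \((T'\rtimes\Z/2)\times\Gm\)) goes beyond what the paper proves, and the references to \Cref{lem:dominant,lem:minus-ordering} are only relevant for those extra cases.
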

At this point, we only give a proof of this proposition in the case when \(m\) is odd.
The cases when \(m\) is even can be dealt with similarly, but we will not need them for our further analysis.

We will, however, need a generalization to products of special orthogonal groups of odd ranks.
If \(\tau_1\in\SO{2n_1+1}\) and \(\tau_2\in\SO{2n_2+1}\) are the elements of order two defined above, then \((\tau_1,\tau_2)\) is an element of order two in \(\SO{2n_1+1}\times\SO{2n_2+1}\).  Together with the maximal tori \(T_1\) and \(T_2\) of \(\SO{2n_1+1}\) and \(\SO{2n_2+1}\), it generates an extended torus \((T_1\times T_2)\rtimes \Z/2\).

\begin{mdprop}\label{SOxSO-GW-special}
  The morphism induced by restriction
  \[
  \GWRep{\SO{2n_1+1}\times\SO{2n_2+1}} \to \GWRep{(T_1\times T_2)\rtimes \Z/2}
  \]
  is injective.
  In particular, \(\GWRep{\SO{2n_1+1}\times\SO{2n_2+1}}\) is a \(\lambda\)-ring.
\end{mdprop}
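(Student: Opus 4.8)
The plan is to mimic, in the Grothendieck--Witt setting, the classical proof that $\K$ of a split reductive group is a $\lambda$-ring: restrict to a torus-like subgroup with enough symmetric representations, show the restriction is injective, and apply the embedding criterion. Write $G := \SO{2n_1+1}\times\SO{2n_2+1}$ and let $ST := (T\times T)\rtimes\Z/2$ be the extended torus generated by the common maximal torus $T\times T$ of $G$ (of rank $n_1+n_2$) and the element $(\tau_1,\tau_2)$ of order two introduced above. Since $(\tau_1,\tau_2)$ acts on $T\times T$ by inversion, $ST$ is an extended torus in the sense of this section, and $\GW(ST)$ is a $\lambda$-ring by \Cref{ST-GW-special} (applied with $r=n_1+n_2$). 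Restriction of representations is a tensor functor compatible with the exterior powers and with the natural identifications $\eta$ of \Cref{sec:lambdaRings}, so the induced map $i^*\colon\GW(G)\to\GW(ST)$ is a $\lambda$-homomorphism. It therefore suffices to prove that $i^*$ is injective, after which the embedding criterion shows that $\GW(G)$ is a $\lambda$-ring. Note that the odd case of \Cref{SO-GW-special} is the special case $n_2=0$, where $\SO{1}$ is trivial and $ST=\subsymT{2n_1+1}$.

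I would prove injectivity by comparing both sides as free $\GW(F)$-modules. Since $G$ is split reductive, $\End_G(S)=F$ for every simple $G$-module \cite{CalmesHornbostel:reductive}*{Proposition~2.2}, the simple $G$-modules are the external tensor products $V_{\vec\gamma}\boxtimes V_{\vec\kappa}$ of simple modules for the two factors, indexed by pairs $(\vec\gamma,\vec\kappa)$ of dominant weights, and by \Cref{lem:SO-symmetries} each such module is symmetric. Fixing a symmetry $\sigma_{\vec\gamma}$ on each $V_{\vec\gamma}$ (the trivial symmetry on the trivial module), \Cref{GW-of-Rep:simplified} identifies $\GW(G)$ with the free $\GW(F)$-module on the classes $(V_{\vec\gamma}\boxtimes V_{\vec\kappa},\sigma_{\vec\gamma}\otimes\sigma_{\vec\kappa})$. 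On the other side, \Cref{tt:symmetries} (with $r=n_1+n_2$) identifies $\GW(ST)$ with the free $\GW(F)$-module on $1^+$, $\delta^+$ and the classes $[e^{\vec\eta}]^+$, where $\vec\eta$ runs over the non-zero characters of $T\times T$ modulo sign.

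Now $i^*$ is ``triangular'' with respect to the dominance order on highest weights. Upon restriction to $ST$, every weight of $V_{\vec\gamma}\boxtimes V_{\vec\kappa}$ is $\preceq(\vec\gamma,\vec\kappa)$, and the highest weight occurs with multiplicity one; when $(\vec\gamma,\vec\kappa)\neq 0$, the two one-dimensional weight spaces of weights $\pm(\vec\gamma,\vec\kappa)$ span a single copy of $[e^{(\vec\gamma,\vec\kappa)}]$, on which the restricted form is a nondegenerate symmetric pairing of the two non-isomorphic characters $e^{(\vec\gamma,\vec\kappa)}$ and $e^{-(\vec\gamma,\vec\kappa)}$, hence equals $u\cdot[e^{(\vec\gamma,\vec\kappa)}]^+$ for a one-dimensional form $u$ over $F$, that is, for a \emph{unit} of $\GW(F)$. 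Decomposing the semisimple module $i^*(V_{\vec\gamma}\boxtimes V_{\vec\kappa},\sigma_{\vec\gamma}\otimes\sigma_{\vec\kappa})$ by means of \eqref{eq:semisimple-decomp} thus gives, for $(\vec\gamma,\vec\kappa)\neq 0$,
\[
i^*(V_{\vec\gamma}\boxtimes V_{\vec\kappa},\sigma_{\vec\gamma}\otimes\sigma_{\vec\kappa}) = u\cdot[e^{(\vec\gamma,\vec\kappa)}]^+ + \bigl(\text{a }\GW(F)\text{-combination of }1^+,\ \delta^+\text{, and }[e^{\vec\eta}]^+\text{ with }\vec\eta\prec(\vec\gamma,\vec\kappa)\bigr),
\]
while $i^*$ of the trivial symmetric module is $1^+$. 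Given $0\neq x=\sum\phi_{(\vec\gamma,\vec\kappa)}\cdot(V_{\vec\gamma}\boxtimes V_{\vec\kappa},\sigma_{\vec\gamma}\otimes\sigma_{\vec\kappa})$, pick $(\vec\gamma_0,\vec\kappa_0)$ maximal in the dominance order among the indices with $\phi_{(\vec\gamma_0,\vec\kappa_0)}\neq 0$. If $(\vec\gamma_0,\vec\kappa_0)\neq 0$, then $[e^{(\vec\gamma_0,\vec\kappa_0)}]^+$ can contribute to $i^*(V_{\vec\gamma}\boxtimes V_{\vec\kappa},\ldots)$ only when $(\vec\gamma_0,\vec\kappa_0)$ is a weight of $V_{\vec\gamma}\boxtimes V_{\vec\kappa}$, i.e.\ only when $(\vec\gamma_0,\vec\kappa_0)\preceq(\vec\gamma,\vec\kappa)$, so by maximality the coefficient of $[e^{(\vec\gamma_0,\vec\kappa_0)}]^+$ in $i^*(x)$ is $u\cdot\phi_{(\vec\gamma_0,\vec\kappa_0)}\neq 0$; if $(\vec\gamma_0,\vec\kappa_0)=0$ then $x$ is a non-zero $\GW(F)$-multiple of the trivial symmetric module, and so is $i^*(x)$ of $1^+$. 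In either case $i^*(x)\neq 0$.

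The only non-formal ingredient is the weight-theoretic input of the third paragraph---multiplicity one of the highest weight, all other weights strictly below it, and nondegeneracy of the induced form on the top weight space---which consists of standard facts about highest-weight modules plus the elementary observation that a nondegenerate symmetric pairing between two non-isomorphic one-dimensional representations has a well-defined unit class in $\GW(F)$. I expect this to be the only place requiring care; the rest is the same bookkeeping that proves $\K(G)\to\K(ST)$ injective, now carried out over the base ring $\GW(F)$ rather than $\Z$.
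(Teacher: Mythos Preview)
Your argument is correct and follows essentially the same route as the paper: identify both sides as free \(\GW(F)\)-modules via \Cref{GW-of-Rep:simplified} and \Cref{tt:symmetries}, observe that the restriction is upper-triangular with respect to the product dominance order with unit diagonal entries, and conclude by picking a maximal index. The paper's own proof simply records the product ordering \((\vec\mu_1,\vec\mu_2)\succeq(\vec\gamma_1,\vec\gamma_2)\Leftrightarrow\vec\mu_i\succeq\vec\gamma_i\) and says ``argue exactly as before'' (referring to the odd case of \Cref{SO-GW-special}); your write-up spells out the same bookkeeping in more detail, including the separate treatment of the trivial weight.
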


\begin{proof}[Proof of \Cref{SO-GW-special} for odd \(m\)]
The main point is to show that the restriction is a monomorphism. Once we know this, the claim that \(\GWRep{\SO{m}}\) is a \(\lambda\)-ring follows from \Cref{ST-GW-special}.

As a preliminary exercise, consider the restriction \(i^*\colon \KRep{\SO{m}}\to \KRep{T}\). One way to argue that this morphism is injective is as follows:
  \begin{quote}
    The isomorphism classes of the simple \(\SO{m}\)-modules \(V_{\vec\gamma}\) for the different dominant weights \(\vec\gamma\) form a \(\Z\)-basis of \(\KRep{\SO{m}}\). Moreover, for dominant weights \(\vec\gamma\) and \(\vec\mu\),
    \begin{compactitem}[-]
    \item the coefficient of \(e^{\vec\gamma}\) in \(i^*V_{\vec\mu}\) is non-zero only if \(\vec\mu\succeq\vec\gamma\),
    \item the coefficient of \(e^{\vec\gamma}\) in \(i^*V_{\vec\gamma}\) is \(1\).
    \end{compactitem}
    Suppose \(e := \sum_{\vec\gamma}n_{\vec\gamma}V_{\vec\gamma}\) is an element of \(\KRep{\SO{m}}\) that restricts to zero. Let \(\vec\gamma\) be maximal among all dominant weights for which \(n_{\vec\gamma}\neq 0\). Then the coefficient of \(e^{\vec\gamma}\) in \(i^*e\) is precisely \(n_{\vec\gamma}\). So we find that \(n_{\vec\gamma}\) must be zero, a contradiction.
  \end{quote}
  Essentially the same argument can be applied to the restriction \(\GWRep{\SO{m}}\to\GWRep{\subsymT{m}}\).
  As we have seen, all simple \(\SO{m}\)-modules are symmetric.
    Choose a symmetry \(\theta_{\vec\gamma}\) on \(V_{\vec\gamma}\) for each dominant \(\vec\gamma\), so that
    \[
\begin{aligned}
    \GWRep{\SO{m}} &= \GW(F)\vecspan{(V_{\vec\gamma}, \theta_{\vec\gamma})}_{\vec\gamma \text{ dominant}}\\
    \GWRep{\subsymT{m}} &= \GW(F)\vecspan{1^+,\delta^+,[e^{\vec\gamma}]^+}_{\{\vec\gamma,-\vec\gamma\} \text{ with } \vec \gamma\neq 0}.
    \end{aligned}
\]
    For dominant weights \(\vec\gamma\), \(\vec\mu\),
    \begin{compactitem}[-]
    \item the coefficient of \([e^{\vec\gamma}]^+\) in \(i^*(V_{\vec\mu},\theta_{\vec\mu})\) is non-zero only if \(\vec\mu\succeq\vec\gamma\),
    \item the coefficient of \([e^{\vec\gamma}]^+\) in \(i^*(V_{\vec\gamma},\theta_{\vec\gamma})\) is a non-degenerate rank-one symmetric form \((\alpha_{\vec\gamma})\).
    \end{compactitem}
    Indeed, this can be checked by restricting to \(\KRep{T_m}\).

    Now suppose \(e := \sum_{\vec\gamma}\phi_{\vec\gamma}(V_{\vec\gamma},\theta_{\vec\gamma})\) with \(\phi_{\vec \gamma} \in \GW(F)\) is an element of \(\GWRep{\SO{m}}\) that restricts to zero in \(\GWRep{\subsymT{m}}\).
 Let \(\vec\gamma\) be maximal among all dominant weights for which \(\phi_{\vec\gamma}\neq 0\) in \(\GW(F)\). Then the coefficient of \(e^{\vec\gamma}\) in \(i^*e\) is \(\phi_{\vec\gamma}\cdot(\alpha_{\vec\gamma})\). As \((\alpha_{\vec\gamma})\) is invertible, we find that \(\phi_{\vec\gamma} = 0\), a contradiction.
\end{proof}

\begin{proof}[Proof of \Cref{SOxSO-GW-special}]
  A character \(\vec\gamma = (\vec\gamma_1,\vec\gamma_2)\in (T_1\times T_2)^*\) is dominant for \(\SO{2n_1+1}\times\SO{2n_2+1}\) if and only if each \(\vec\gamma_i\) is dominant for \(\SO{2n_i+1}\).
  The corresponding simple \(\SO{2n_1+1}\times\SO{2n_2+1}\)-modules are of the form
  \(
  V_{\vec\gamma} = V_{\vec\gamma_1}\otimes V_{\vec\gamma_2}.
  \)
  In particular, all simple modules are again symmetric.
  Using the ordering
  \[
  (\vec\mu_1,\vec \mu_2) \succeq (\vec\gamma_1,\vec\gamma_2) \quad :\Leftrightarrow \quad \vec\mu_1 \succeq \vec \gamma_1 \text{ and } \vec\mu_2\succeq \vec\gamma_2,
  \]
  we can argue exactly as before.
\end{proof}

\subsection{Representations of split orthogonal groups}
In this section, we finally deduce that the symmetric representation ring of the split orthogonal group \(\OO{m}\) is a \(\lambda\)-ring.

We take \(\OO{m}\) to be defined by one of the standard symmetric forms \eqref{eq:standard-symmetric-forms} (\Cref{sec:symreps}) and write \(T\) for the maximal torus of \(\SO{m}\) defined in the previous section.
When \(m\) is odd, the orthogonal group decomposes as a direct product, while for even \(m\), we only have a semi-direct product decomposition:
\[
\begin{aligned}
  \OO{2n+1} & \cong \SO{2n+1}\times\Z/2 &&\text{with \(\Z/2\) generated by \(-1\)}\\
  \OO{2n\phantom{+1}} &\cong \SO{2n\phantom{+1}}\rtimes\Z/2 &&\text{with \(\Z/2\) generated by \(\diag{1,\dots 1, \mm{0&1\\1&0}}\)}
\end{aligned}
\]
We thus obtain the following well-known description of the simple \(\OO{m}\)-modules.

\begin{mdprop}\label{Om-simple-modules}\hfill
  \begin{compactenum}[(a)]
  \item Each simple \(\SO{2n+1}\)-module \(V_{\vec\gamma}\) lifts to two distinct simple \(\OO{2n+1}\)-modules
    \(\hat V_{\vec\gamma}\) and \(\hat V_{\vec\gamma}\otimes\delta\),
    where \(\delta\) denotes the non-trivial character of \(\Z/2\).
    All simple \(\OO{2n+1}\)-modules arise in this way.
    \[
    \left.\begin{aligned}
        &\hat V_{\vec \gamma}\\
        &\hat V_{\vec \gamma}\otimes\delta
      \end{aligned}\right\}\xmapsto{\text{res}} V_{\vec\gamma}
    \]
  \item Likewise, each simple \(\SO{2n}\)-module \(V_{\vec\gamma}\) with \(\gamma_n=0\) lifts to two distinct simple \(\OO{2n}\)-modules
    \(  \hat V_{\vec\gamma}\) and \(\hat V_{\vec\gamma}\otimes\delta  \).
    For each dominant \(\vec\gamma\) with \(\gamma_n>0\) there is a simple \(\OO{2n}\)-module
    \( \hat V_{\vec \gamma}  \)
    lifting \(V_{\vec\gamma}\oplus V_{\vec\gamma^-}\); in this case, \(V_{\vec\gamma}\) and \(V_{\vec\gamma}\otimes\delta\) are isomorphic.
    Every simple \(\OO{2n}\)-module arises in one of these two ways.
    \begin{alignat*}{7}
      &\gamma_n= 0\colon\quad &&\left.\begin{aligned}
          &\hat V_{\vec \gamma}\\
          &\hat V_{\vec \gamma}\otimes\delta
        \end{aligned}\right\}&&\xmapsto{\text{res}} V_{\vec\gamma}\\
      &\gamma_n > 0\colon\quad &&\;\hat V_{\vec\gamma} &&\xmapsto{\phantom{\text{res}}} V_{\vec\gamma} \oplus V_{\vec\gamma^-}
    \end{alignat*}
  \item Every simple \(\OO{m}\)-module is symmetric, for any \(m\), and its endomorphism ring is equal to the base field \(F\).
  \end{compactenum}
\end{mdprop}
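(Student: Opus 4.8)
The plan is to prove \Cref{Om-simple-modules} by running the standard Clifford theory for the index-two normal subgroup \(\SO{m}\triangleleft\OO{m}\), exploiting the (semi-)direct product decompositions recalled just above the statement. This is legitimate in any characteristic different from two: the quotient \(\Z/2\) is linearly reductive, so \(\mathrm{Ind}_{\SO{m}}^{\OO{m}}\) is exact and bi-adjoint to restriction, and the restriction of any \(\OO{m}\)-module to \(\SO{m}\) is semisimple of length at most two (if \(U\) is a simple \(\SO{m}\)-submodule of \(W|_{\SO{m}}\) and \(r\in\OO{m}\setminus\SO{m}\), then \(U+rU\) is an \(\OO{m}\)-submodule, hence all of \(W\) when \(W\) is simple). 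I will freely use the two facts already available: every simple \(\SO{m}\)-module has endomorphism ring \(F\) (\cite{CalmesHornbostel:reductive}*{Proposition~2.2}), and the self-duality of simple \(\SO{m}\)-modules described in \Cref{lem:SO-symmetries}.

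First I would dispose of the odd case. Here \(\OO{2n+1}=\SO{2n+1}\times\Z/2\) with \(\Z/2\) central; since \(\Z/2\) is linearly reductive, every module splits into its two \(\Z/2\)-isotypic pieces, which are \(\SO{2n+1}\)-submodules. A simple \(\OO{2n+1}\)-module is therefore one on which \(\Z/2\) acts through a character \(\chi\in\{1,\delta\}\), and it is then already simple over \(\SO{2n+1}\); conversely \(V_{\vec\gamma}\boxtimes\chi\) is simple for each simple \(V_{\vec\gamma}\) and each \(\chi\), and these are pairwise non-isomorphic, being distinguished by the \(\Z/2\)-action. This gives part~(a). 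The endomorphism ring is \(F\) because \(\Z/2\) acts by a scalar and \(\End_{\SO{2n+1}}(V_{\vec\gamma})=F\), and \(V_{\vec\gamma}\boxtimes\chi\) is symmetric because \((V_{\vec\gamma}\boxtimes\chi)^{\dual}\cong V_{\vec\gamma}^{\dual}\boxtimes\chi^{\dual}\cong V_{\vec\gamma}\boxtimes\chi\) by \Cref{lem:SO-symmetries} and because a one-dimensional self-dual module carries a symmetric form.

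For the even case the structural input I would isolate is that conjugation by the generator \(\sigma\) of \(\Z/2\) (realized by \(r=\diag{1,\dots,1,\mm{0&1\\1&0}}\)) induces the non-trivial diagram automorphism of \(D_n\), which on highest weights is \(\vec\gamma\mapsto\vec\gamma^{-}\); hence \(\twist{\sigma}{V_{\vec\gamma}}\cong V_{\vec\gamma^{-}}\), which is isomorphic to \(V_{\vec\gamma}\) exactly when \(\gamma_n=0\). Clifford theory then yields the dichotomy of part~(b): when \(\gamma_n>0\) the module \(V_{\vec\gamma}\) is not \(\sigma\)-self-conjugate, so \(\mathrm{Ind}_{\SO{2n}}^{\OO{2n}}V_{\vec\gamma}\) is simple, restricts to \(V_{\vec\gamma}\oplus V_{\vec\gamma^{-}}\), coincides with \(\mathrm{Ind}\,V_{\vec\gamma^{-}}\), and absorbs \(\delta\); when \(\gamma_n=0\) the module \(V_{\vec\gamma}\) is \(\sigma\)-self-conjugate and, by the standard fact that the self-conjugate simples of \(\SO{2n}\) extend to \(\OO{2n}\), has exactly two simple lifts \(\hat V_{\vec\gamma}\) and \(\hat V_{\vec\gamma}\otimes\delta\) (the set of extensions being a torsor under \(\Hom(\Z/2,\Gm)=\{1,\delta\}\), and distinct lifts being non-isomorphic by the scalar argument on \(\End=F\)). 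Exhaustiveness follows from the length-\(\le 2\) observation of the first paragraph. The endomorphism rings are again \(F\): in the extension case \(\End_{\OO{2n}}(\hat V_{\vec\gamma})\subseteq\End_{\SO{2n}}(V_{\vec\gamma})=F\), and in the induced case Frobenius reciprocity gives \(\End_{\OO{2n}}(\mathrm{Ind}\,V_{\vec\gamma})\cong\Hom_{\SO{2n}}(V_{\vec\gamma},V_{\vec\gamma}\oplus V_{\vec\gamma^{-}})=F\).

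It remains to see (part~(c)) that these \(\OO{m}\)-modules are not merely self-dual but \emph{symmetric}. Self-duality itself is easy: combine the odd case with \((\mathrm{Ind}\,M)^{\dual}\cong\mathrm{Ind}(M^{\dual})\), \(M^{\dual}\cong V_{-w_0\vec\gamma}\), and \(-w_0\vec\gamma\in\{\vec\gamma,\vec\gamma^{-}\}\). Since all endomorphism rings are \(F\), each self-duality is symmetric or alternating, so the real content is to exclude the alternating case. This is immediate in the odd case, and for even \(m\) with \(\gamma_n=0\) one restricts the form along \(\SO{2n}\hookrightarrow\OO{2n}\): a multiplicity-one argument shows the restriction to \(V_{\vec\gamma}\) is non-degenerate, and every non-zero \(\SO{2n}\)-invariant form on the symmetric module \(V_{\vec\gamma}\) is symmetric. \emph{The hard part is the case \(\gamma_n>0\)}: the \(\SO{2n}\)-restriction \(V_{\vec\gamma}\oplus V_{\vec\gamma^{-}}\) supports alternating as well as symmetric \(\OO{2n}\)-invariant forms (a hyperbolic plane already does), so one genuinely needs the \(\OO{2n}\)-action. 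I expect to settle it by a Frobenius--Schur-type analysis of \(\mathrm{Ind}\,V_{\vec\gamma}\): for \(n\) even, \(M^{\dual}\cong M\) and the invariant form is block-diagonal with symmetric blocks (again because \(V_{\vec\gamma}\) is symmetric over \(\SO{2n}\)); for \(n\) odd, \(M^{\dual}\cong\twist{\sigma}{M}\not\cong M\), and the Frobenius--Schur formula for an induced module over an index-two subgroup in this regime forces the invariant form to be symmetric. A more robust alternative is to restrict to a symmetric subgroup of the form \((T'\rtimes\Z/2)\times\OO{2}\subset\OO{2n}\), generated by \(T\) together with the commuting reflections used in the \(\SO{2n}\) analysis and all of whose simple modules are symmetric by the earlier subsections, and to exhibit a symmetric constituent of multiplicity one. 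In any event this delicacy is confined to even orthogonal groups; the proof of \Cref{thm:O-special} only invokes \Cref{Om-simple-modules} for \(\OO{2n+1}\) and products thereof, where everything above is entirely clean.
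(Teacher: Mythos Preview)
Your overall strategy---Clifford theory for the index-two normal subgroup \(\SO{m}\triangleleft\OO{m}\)---is the same as the paper's, which packages the same arguments via the appendix results (\Cref{reps-of-products}, \Cref{reps-of-sdp:main}--\Cref{reps-of-sdp:endo}).  Parts~(a) and~(b) and the endomorphism-ring claim in~(c) are fine.

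There is, however, a genuine gap in your treatment of~(c) for even \(m\) with \(\gamma_n=0\).  You argue by restricting ``the form'' to \(\SO{2n}\), but you have not shown that \(\hat V_{\vec\gamma}\) carries any \(\OO{2n}\)-invariant form at all: a priori the dual of \(\hat V_{\vec\gamma}\) is one of the two lifts of \(V_{\vec\gamma}^\vee\cong V_{\vec\gamma}\), and nothing you have said excludes \(\hat V_{\vec\gamma}^\vee\cong\hat V_{\vec\gamma}\otimes\delta\).  The paper resolves this by normalising \(\hat V_{\vec\gamma}\) so that \(\Z/2\) acts trivially on the highest-weight line \(e^{\vec\gamma}\), and then exhibiting an explicit element \(\tau\in\SO{2n}(F)\) commuting with the \(\Z/2\)-generator and sending \(e^{\vec\gamma}\) to \(e^{-\vec\gamma}\); this forces \(\Z/2\) to act trivially on \(e^{-\vec\gamma}\subset\hat V_{\vec\gamma}\) as well, and comparing with the contragredient action on \(\hat V_{\vec\gamma}^\vee\) then pins down \(\hat V_{\vec\gamma}^\vee\cong\hat V_{\vec\gamma}\).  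Your argument needs some step of this kind.

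For \(\gamma_n>0\) your proposal is only a sketch (``I expect to settle it\dots'').  The paper's argument here is short and worth knowing: restrict \(\hat V_{\vec\gamma}\) to the subgroup \(T\rtimes\Z/2\subset\OO{2n}\) where \(\Z/2\) is generated by the same element \(r=\diag{1,\dots,1,\mm{0&1\\1&0}}\) (so \(\Z/2\) inverts only the last coordinate of \(T\)); the simple symmetric summand \(e^{(\gamma_1,\dots,\gamma_{n-1})}\otimes[e^{\gamma_n}]=e^{\vec\gamma}\oplus e^{\vec\gamma^-}\) occurs with multiplicity one, forcing the self-duality on \(\hat V_{\vec\gamma}\) to be symmetric.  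Your suggested restriction to \((T'\rtimes\Z/2)\times\OO{2}\) is in the same spirit but more elaborate than needed.

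Finally, your closing remark that \Cref{thm:O-special} ``only invokes \Cref{Om-simple-modules} for \(\OO{2n+1}\)'' is incorrect: part~(2) of that proof treats \(\OO{2n}\) and relies both on the classification in~(b) and on the fact that every simple \(\OO{2n}\)-module is symmetric with endomorphism ring \(F\) (needed to invoke \Cref{symmetric-JH-decomp} and to write the \(\GW\)-decomposition).  So the even case cannot be waived.
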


\begin{proof}\hfill\\
{\it (a) } is immediate from \Cref{reps-of-products}, and {\it (b) } follows from \Cref{reps-of-sdp:semi-simple}:  the automorphism \(\alpha\) defined by conjugation with \(\tau\) already stabilizes our chosen split maximal torus \(T\) in this case and the induced action on \(T^*\) is given by \(\alpha^*(\vec\gamma) = \vec\gamma_-\).

{\it (c)} The claim concerning the endomorphism ring follows from the corresponding fact for simple \(\SO{m}\)-modules and \Cref{reps-of-sdp:endo}.   It remains to show that the simple \(\OO{m}\)-modules are symmetric.   For odd \(m\), this is clear. So we concentrate on the case \(m=2n\).

When \(\gamma_n\neq 0\), the module \(\hat V_{\vec\gamma}\) is necessarily self-dual because it is the only simple \(\OO{2n}\)-module restricting to the self-dual \(\SO{2n}\)-module \(V_{\vec\gamma}\oplus V_{\vec\gamma^-}\).  To see that it is symmetric, we consider instead the restriction to \(T\rtimes\Z/2\), where \(\Z/2\) acts non-trivially only on the last coordinate of \(T\) (see the beginning of the proof): this restriction contains the simple symmetric \(T\rtimes\Z/2\)-module
\[
e^{(\gamma_1,\dots,\gamma_{n-1})} \otimes [e^{\gamma_n}] = e^{\vec\gamma}\oplus e^{\vec \gamma^-}
\]
as a direct summand with multiplicity one.

When  \(\gamma_n = 0\), we know that \(\hat V_{\vec\gamma}\) is either self-dual or dual to \(\hat V_{\vec\gamma}\otimes\delta\).
In order to exclude the second possibility, recall that by our construction \(\Z/2\) acts trivially on \(e^{\vec \gamma}\subset \hat V_{\vec\gamma}\).
It follows that \(\Z/2\) acts trivially on the two-dimensional subspace \(e^{\vec \gamma}\oplus e^{-\vec \gamma}\):
\begin{hilfslem*} When \(\gamma_n = 0\), \(\Z/2\) acts trivially on \(e^{\vec\gamma}\oplus e^{-\vec\gamma}\subset \hat V^{\vec\gamma}\).
\end{hilfslem*}
\proof[Proof of the lemma]
  Consider the element
  \[
  \tau := \begin{cases}
    \diag{\mm{0&1\\1&0}, \dots\dots\dots, \mm{0&1\\1&0}} &\in \SO{2n}(F)  \quad\text{ if \(n\) is even} \\
    \diag{\mm{0&1\\1&0}, \dots, \mm{0&1\\1&0}, 1,1} &\in \SO{2n}(F)  \quad\text{ if \(n\) is odd}
      \end{cases}
  \]
  This element commutes with the generator  \(1 = \diag{1,\dots 1, \mm{0&1\\1&0}}\) of \(\Z/2\) and maps \(e^{\vec\gamma}\) to \(e^{(-\gamma_1,\dots,-\gamma_{n-1},\gamma_n)}\) = \(e^{-\vec\gamma}\) (\cf \Cref{reps-twist}).
\hilfsqed

The lemma implies that \(\Z/2\) also acts trivially on the subspace \(e^{\vec\gamma} \oplus e^{-\vec\gamma} \subset \hat V_{\vec\gamma}^\dual\). So we must have \(\hat V_{\vec\gamma}^\dual\cong\hat V_{\vec\gamma}\).
Finally, as \(\hat V_{\vec\gamma}\) restricts to a simple symmetric \(\SO{2n}\)-module, the duality on \(\hat V_{\vec\gamma}\) must be symmetric.
\end{proof}

\begin{cor}\label{thm:O-special}
  \(\GWRep{\OO{m}}\) and \(\GWRep{\OO{m_1}\times\OO{m_2}}\) are \(\lambda\)-rings for all \(m\), \(m_1\), \(m_2\).
\end{cor}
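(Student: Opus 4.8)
The plan is to apply the Line Generation Lemma, \Cref{lem:special-via-line-generators}. Since \(\GW(G)\) is always line-special by \Cref{lem:line-special}, and since the detection criterion guarantees that the \(\lambda\)-identities \eqref{eq:special-1} and \eqref{eq:special-2} hold automatically for every set of elements contained in the image of a \(\lambda\)-ring, it suffices to produce, for suitable \(n_1,n_2\), a \(\lambda\)-ring homomorphism
\[
\GW(\SO{2n_1+1}\times\SO{2n_2+1}) \longrightarrow \GW(\OO{m_1}\times\OO{m_2})
\]
whose image generates the target by line elements; the source is a \(\lambda\)-ring by \Cref{SOxSO-GW-special}. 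As the simple modules of a product are external tensor products of simple modules of the factors — all of which are symmetric with endomorphism ring \(F\) by \Cref{Om-simple-modules}, so that \Cref{GW-of-Rep:simplified} describes the \(\GW(F)\)-module structure — and as symmetric forms and the determinant characters of the two factors behave multiplicatively, this reduces at once to the corresponding one-factor statement: for each \(m\), a \(\lambda\)-homomorphism from \(\GW(\SO{2n+1})\), with \(2n+1=m\) when \(m\) is odd and \(2n+1=m+1\) when \(m\) is even, whose image generates \(\GW(\OO m)\) by line elements. (That \(\GW(\SO{2n+1})\) is a \(\lambda\)-ring is \Cref{SO-GW-special}; the assertion for a single \(\OO m\) is the special case of a trivial second factor.)

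When \(m=2n+1\) is odd, I would use the projection \(\OO m = \SO m\times\Z/2 \to \SO m\). By \Cref{Om-simple-modules}\,(a), pullback along this projection identifies the image with \(\GW(F)\vecspan{(\hat V_{\vec\gamma},\theta_{\vec\gamma})}_{\vec\gamma}\), where \(\hat V_{\vec\gamma}\) is the lift on which \(\Z/2\) acts trivially, while the complementary \(\GW(F)\)-summand of \(\GW(\OO m)\) is obtained from this one by multiplying with the line element \(\delta^+\). Hence \(\GW(\OO m)\) is generated by the line elements \(1^+\) and \(\delta^+\) over the image, and \Cref{lem:special-via-line-generators} applies.

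When \(m=2n\) is even there is no such projection, but the standard representation twisted by its determinant gives a closed embedding
\[
\iota\colon \OO{2n}\hookrightarrow\SO{2n+1},\qquad g\mapsto \diag{g,\det g},
\]
whose image lies in the \emph{split} \(\SO{2n+1}\): the matrix \(\diag{g,\det g}\) preserves the standard symmetric form \eqref{eq:standard-symmetric-forms} on \(F^{2n+1}\) and has determinant \((\det g)^2=1\). Pullback along \(\iota\) is a \(\lambda\)-homomorphism \(\iota^*\colon\GW(\SO{2n+1})\to\GW(\OO{2n})\), and since \(\iota\) restricts to the standard inclusion of the common maximal torus, the \(T\)-weights of \(\iota^*V_{\vec\gamma}\) coincide with those of the simple \(\SO{2n+1}\)-module \(V_{\vec\gamma}\). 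I would then argue by induction on the dominance order of \((2n+1)\)-dominant weights. Fixing a symmetry \(\theta'_{\vec\gamma}\) on \(V_{\vec\gamma}\), a comparison of highest weights shows that in \(\K(\OO{2n})\)
\[
\iota^*V_{\vec\gamma}=\hat V_{\vec\gamma}+\bigl(\text{simple }\OO{2n}\text{-modules with highest weight strictly }(2n+1)\text{-smaller than }\vec\gamma\bigr),
\]
the coefficient of \(\hat V_{\vec\gamma}\) being \(1\) because \(e^{\vec\gamma}\) occurs in \(V_{\vec\gamma}\) with multiplicity one. By \Cref{symmetric-JH-decomp} the corresponding identity in \(\GW(\OO{2n})\) reads \(\iota^*(V_{\vec\gamma},\theta'_{\vec\gamma}) = (\alpha_{\vec\gamma})\cdot(\hat V_{\vec\gamma},\theta_{\vec\gamma}) + (\text{strictly smaller terms})\) for some non-degenerate rank-one form \((\alpha_{\vec\gamma})\), which is a unit in \(\GW(F)\). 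Since every simple \(\OO{2n}\)-module is some \(\hat V_{\vec\gamma}\) with \(\vec\gamma\) \(2n\)-dominant — hence, after replacing \(\vec\gamma\) by \(\vec\gamma^-\) if necessary, \((2n+1)\)-dominant by \Cref{lem:dominant} — or the \(\delta^+\)-twist of such, solving this triangular system, and using that \(\iota^*\) is \(\GW(F)\)-linear so that its image is a \(\GW(F)\)-submodule, expresses every \((\hat V_{\vec\gamma},\theta_{\vec\gamma})\), and hence every element of \(\GW(\OO{2n})\), as a line-element combination of elements of the image of \(\iota^*\). So \Cref{lem:special-via-line-generators} again applies.

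The step that needs care is the triangularity in the even case. The error weights in \(\iota^*V_{\vec\gamma}\) are only controlled by the coarser \((2n+1)\)-dominance order, not the \(2n\)-dominance order of \(\SO{2n}\); this is harmless precisely because one inducts along the \((2n+1)\)-order, but it has to be said explicitly. One also needs \Cref{symmetric-JH-decomp} to pass from ``the leading \(\K\)-coefficient equals \(1\)'' to ``the leading \(\GW\)-coefficient is a \emph{unit}''. In positive characteristic the branching of simple modules along \(\iota\) is not semisimple, but only the multiplicity-one statement for highest weights is used, so this causes no difficulty.
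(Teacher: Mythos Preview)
Your approach is essentially the paper's own proof: the same projection in the odd case, the same embedding \(\OO{2n}\hookrightarrow\SO{2n+1}\) in the even case, the same triangularity argument on \((2n+1)\)-dominant weights, the same passage from \(\K\) to \(\GW\) via \Cref{symmetric-JH-decomp}, and the same reduction of the product case to the factors.

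One imprecision worth flagging: in the even case you assert that the leading term of \(\iota^*V_{\vec\gamma}\) in \(\K(\OO{2n})\) is \(\hat V_{\vec\gamma}\) itself, citing multiplicity one of \(e^{\vec\gamma}\). But \(e^{\vec\gamma}\) occurs with multiplicity one in \emph{both} \(\hat V_{\vec\gamma}\) and \(\hat V_{\vec\gamma}\otimes\delta\) (they share the same \(T\)-weights), so the weight count only gives \(n_{\vec\gamma}+n'_{\vec\gamma}=1\), i.e.\ the leading term is \(\hat V_{\vec\gamma}\) \emph{or} \(\hat V_{\vec\gamma}\otimes\delta\). The paper is careful to write ``or'' here. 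This does not damage your argument, since you are generating over the image by the line elements \(1^+\) and \(\delta^+\) anyway, and a \(\delta^+\)-twist swaps the two possibilities; but the sentence as written overclaims.
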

\begin{proof}\mbox{}
  \begin{asparaenum}[(1)]
  \item For the orthogonal group \(\OO{2n+1}\) of odd rank, the inclusion \(\SO{2n+1}\hookrightarrow\OO{2n+1}\) is split by the projection \(q_{2n+1}\colon\OO{2n+1}\twoheadrightarrow\SO{2n+1}\). Consider the induced inclusion of pre-\(\lambda\)-rings
    \[
\begin{aligned}
      \GWRep{\SO{2n+1}} &\xhookrightarrow{q_{2n+1}^*} \GWRep{\OO{2n+1}}.
    \end{aligned}
\]
    By \Cref{SO-GW-special}, \(\GWRep{\SO{2n+1}}\) is a \(\lambda\)-ring.
    The description of the simple \(\OO{2n+1}\)-modules in \Cref{Om-simple-modules} implies that \(\GWRep{\OO{2n+1}}\) is generated as a \(\GWRep{\SO{2n+1}}\)-module by \(1^+\) and \(\delta^+\).
   We may therefore conclude via \cref{lem:special-via-line-generators}.

  \item For the orthogonal group \(\OO{2n}\) of even rank we consider its embedding into \(\SO{2n+1}\):
    \begin{equation}\label{eq:O2n-inclusion}
      \xymatrix@R=6pt{
        {\SO{2n}} \ar@{^{(}.>}[r] \ar@{_{(}.>}[d] & {\SO{2n+1}}\\
        {\OO{2n}} \ar@{_{(}->}[ru]_{q_{2n}}
        }
      \end{equation}
        The image of the induced ring morphisms
    \[
\begin{aligned}
      \GWRep{\SO{2n+1}}&\xrightarrow{q_{2n}^*}\GWRep{\OO{2n}}
    \end{aligned}
\]
    is a \(\GW(F)\)-submodule of \(\GWRep{\OO{2n}}\) which is a sub-pre-\(\lambda\)-ring and, by \Cref{SO-GW-special}, in fact a \(\lambda\)-ring. We claim that, as above, \(\GWRep{\OO{2n}}\) is generated over this submodule by the line elements \(1^+\) and \(\delta^+\):
    \begin{align}\label{eq:GW-O2n}
      \im(q_{2n}^*)\vecspan{1^+, \delta^+} = \GWRep{\OO{2n}}
    \end{align}
    Then the claim that \(\GWRep{\OO{2n}}\) is a \(\lambda\)-ring follows as for \(\OO{2n+1}\).

    To prove \eqref{eq:GW-O2n}, we analyse how the simple \(\SO{2n+1}\)-modules restrict to \(\OO{2n}\). As our chosen maximal torus \(T\) is contained in all three groups in diagram~\eqref{eq:O2n-inclusion}, we can do so by comparing weights.
    So let \(\gamma\) be a \((2n+1)\)-dominant character in \(T^*\).
In the simple \(\SO{2n+1}\)-module \(V_{\vec \gamma}\), the weight \(\gamma\) appears with multiplicity one, while all other weights that occur are \((2n+1)\)-smaller than \(\gamma\).\\

    \begin{hilfslem*}[Weights of \(\OO{2n}\)-modules]
      Let \(\vec \gamma\) be \((2n+1)\) dominant, and let \(\hat V_{\vec\gamma}\) be the corresponding \(\OO{2n}\)-module as in \Cref{Om-simple-modules}.
      Then \(e^{\vec\gamma}\) appears with multiplicity one in the restrictions of \(\hat V_{\vec\gamma}\)  and \(\hat V_{\vec\gamma}\otimes\delta\) to \(T\).  All other \((2n+1)\)-dominant weights \(\vec\mu\) for which \(e^{\vec\mu}\) occurs in these restrictions are \((2n+1)\)-smaller than \(\vec \gamma\).
    \end{hilfslem*}
    In fact, as the following proof shows, these weights \(\vec\mu\) are even \(2n\)-smaller than \(\vec\gamma\).

    \proof[Proof of the Lemma]
      If \(\gamma_n = 0\), we see from the restriction to \(\SO{2n}\) that \(\vec\gamma\) occurs with multiplicity one, and that \emph{all} other weights that occur are \(2n\)-smaller than \(\vec\gamma\).

      If \(\gamma_n > 0\), the restriction to \(\SO{2n}\) shows that \(\vec\gamma\) and \(\vec\gamma^-\) both occur with multiplicity one, and that all other weights that occur are either \(2n\)-smaller than one or \(2n\)-smaller than the other. Of these, the \emph{\((2n+1)\)-dominant} weights will necessarily be \(2n\)-smaller than \(\vec\gamma\), by \Cref{lem:minus-ordering}.
    \hilfsqed

    Now consider the \(\SO{2n+1}\)-representation \(V_{\vec\gamma}\) and its restriction to \(\OO{2n}\).  Write its decomposition into simple factors in \(\KRep{\OO{2n}}\) as \(q_{2n}^*(V_{\vec\gamma}) = \sum_{\vec\mu} n_{\vec\mu} \hat V_{\vec \mu} + n'_{\vec \mu}\hat V_{\vec \mu}\otimes\delta\) for certain non-negative integers \(n_{\vec\mu}, n'_{\vec\mu}\).
As the restrictions of \(V_{\vec\gamma}\) and \(q_{2n}^*(V_{\vec\gamma})\) to \(T\) agree, the above lemma implies the following equality in \(\KRep{T}\):
\[
\begin{aligned}
e^{\vec \gamma} + \left(\ctext{3cm}{a lin.\ comb.\ of \(e^{\vec\mu}\) with \(\vec\mu\prec_{2n+1}\vec\gamma\)}\right)
&= \sum_{\vec \mu}\left[ (n_{\vec\mu} + n'_{\vec\mu})e^{\vec\mu}
+ \left(\ctext{5cm}{a lin.\ comb.\ of \(e^{\vec\nu}\) with\\ \(\vec\nu\) \((2n+1)\)-dominant and\\ \(\vec\nu\prec_{2n+1}\vec\mu\)}\right)\right]\\
&\phantom{=} {\vrule height 5ex width 0pt} + \left(\ctext{5cm}{a lin.\ comb.\ of \(e^{\vec\nu}\) with \(\vec\nu\) non-\((2n+1)\)-dominant}\right)
\end{aligned}
\]

It follows that all \(\vec\mu\) that appear with non-zero coefficient \(n_{\vec\mu} + n'_{\vec\mu}\) on the right-hand side satisfy \(\vec\mu\preceq_{2n+1}\vec\gamma\), that \(\vec\gamma\) occurs among these \(\vec\mu\), and that \(n_{\vec\gamma} + n'_{\vec\gamma} = 1\). Thus:
    \begin{equation}\label{eq:O2n-K-decomp}
      q_{2n}^*(V_{\vec\gamma})
      = \left\{\begin{aligned}
          &\hat V_{\vec\gamma}\text{ or} \\
          &\hat V_{\vec \gamma}\otimes\delta
        \end{aligned}\right\}
      + \sum_{\vec\mu\prec_{2n+1} \vec\gamma} (n_{\vec\mu} \hat V_{\vec\mu} + n'_{\vec\mu}\hat V_{\vec\mu} \otimes\delta) \\
      \quad \text{ in } \KRep{\OO{2n}}
    \end{equation}
    This implies that \(1\) and \(\delta\) generate \(\KRep{\OO{2n}}\) as an \(\im(q_{2n}^*)\)-module:
    \begin{quote}
      Suppose the set of \((2n+1)\)-dominant weights for which \(\hat V_{\vec\gamma}\)  is not contained in \(\im(q_{2n}^*)\vecspan{1,\delta}\) is non-empty.  Let \(\vec\gamma\) be a minimal element with respect to the \((2n+1)\)-ordering.
     Then neither \(\hat V_{\vec\gamma}\) nor \(\hat V_{\vec\gamma}\otimes\delta\) will be contained in \(\im(q_{2n}^*)\vecspan{1,\delta}\). But one of these will appear in the decomposition of \(q_{2n}^*(V_{\vec\gamma})\)  as the \emph{only} summand not contained in \(\im(q_{2n}^*)\vecspan{1,\delta}\), which is impossible.
    \end{quote}

    The same argument applies to \(\GW\).
    Indeed, let \(\nu_{\vec\gamma}\) be an arbitrary symmetry on the simple \(\SO{2n+1}\)-module \(V_{\vec\gamma}\).
    Consider the restriction of  \((V_{\vec\gamma},\nu_{\vec\gamma})\) to \(\OO{2n}\). This restriction decomposes into a sum of simple symmetric objects in \(\GWRep{\OO{2n}}\) whose underlying objects are determined by the decomposition \eqref{eq:O2n-K-decomp} in \(\KRep{\OO{2n}}\) (see \Cref{symmetric-JH-decomp}):
    \begin{equation*}
      q_{2n}^*(V_{\vec\gamma},\nu_{\vec\gamma})
      = \left\{\begin{aligned}
          &(\hat V_{\vec\gamma},\theta_{\vec\gamma}) \text{ or} \\
          &(\hat V_{\vec \gamma},\theta_{\vec\gamma})\otimes\delta^+
        \end{aligned}\right\}
      + \sum_{\vec\mu\prec_{2n+1} \vec\gamma} (\phi_{\vec\mu} (\hat V_{\vec\mu}, \theta_{\vec\mu})
      + \phi'_{\vec\mu} (\hat V_{\vec\mu},\theta'_{\vec\mu})\otimes\delta^+ )
      \quad \text{ in } \GWRep{\OO{2n}}
    \end{equation*}
    for certain symmetries \(\nu_{\vec\gamma}\), \(\theta_{\vec \mu}\) and \(\theta'_{\vec \mu}\) and certain coefficients \(\phi_{\vec \mu},\phi'_{\vec \mu}\in\GW(F)\).
    A simple symmetric \(\OO{2n}\)-module \((\hat V_{\vec\gamma},\theta_{\vec\gamma})\) is contained in the \(\GW(F)\)-submodule
    \(\im(q_{2n}^*)\vecspan{1^+,\delta^+}\) for \emph{some} symmetric form \(\theta_{\vec\gamma}\) on \(\hat V_{\vec\gamma}\)
    if and only if it is contained therein for \emph{every} such form, as the possible symmetries on \(\hat V_{\vec\gamma}\) differ only by invertible scalars.
    We can therefore conclude as above, by considering a weight \(\vec\gamma\)  minimal among all those \((2n+1)\)-dominant weights for which \((\hat V_{\vec\gamma},\theta_{\vec\gamma})\) is not contained in \(\im(q_{2n}^*)\vecspan{1^+,\delta^+}\).

  \item Finally, for the product \(\OO{m_1}\times \OO{m_2}\) we consider the morphism of pre-\(\lambda\)-rings
    \[
    \GWRep{\SO{2n_1+1}\times\SO{2n_2+1}} \xrightarrow{(q_{m_1}\times q_{m_2})^*} \GWRep{\OO{m_1}\times\OO{m_2}}
    \]
    where \(n_i = \lfloor m_i/2 \rfloor\).
    The simple \(\OO{2m_1}\times\OO{2m_2}\)-modules are of the form \(W_1\otimes W_2\), where \(W_1\) and \(W_2\) are irreducible representations of \(\OO{m_1}\) and \(\OO{m_2}\). Let \(\delta_1^+\) and \(\delta_2^+\) be the one-dimensional modules on which \(\OO{m_1}\) and \(\OO{m_2}\) act via the determinant, each equipped with some symmetry.
Let \(\omega_1\) and \(\omega_2\) be symmetries on \(W_1\) and \(W_2\).
As we have seen in the two previous parts of the proof, we can write
    \[
\begin{aligned}
      (W_1,\omega_1) &= q_{m_1}^* (x_1) + \delta_1^+q_{m_1}^*(x_2)\\
      (W_2,\omega_2) &= q_{m_2}^* (y_1) + \delta_2^+q_{m_2}^*(y_2)
    \end{aligned}
\]
    for certain elements \(x_i,y_i\in \GWRep{\SO{2n_i+1}}\).
    In \(\GWRep{\OO{m_1}\times\OO{m_2}}\) we therefore have
    \[
\begin{aligned}
      (W_1\otimes W_2, \omega_1\otimes\omega_2) = \sum_{i,j \in \{ 0, 1\} }(\delta^+_1)^i(\delta^+_2)^j(q_{m_1}\times q_{m_2})^*(x_i\otimes y_j).
    \end{aligned}
\]
    Thus, \(\GWRep{\OO{m_1}\times\OO{m_2}}\) is generated over \(\im\left((q_{m_1}\times q _{m_2})^*\right)\) by the line elements \(1^+, \delta_1^+\) and \(\delta_2^+\), and we may once more conclude via \cref{lem:special-via-generators}.
  \end{asparaenum}
\end{proof}

\section{Appendix: Representations of product group schemes}\label{sec:appendix}
In this appendix, we collect some basic facts from the representation theory of affine algebraic group schemes.
Though we assume that all of these are well-known, we have been unable to locate precise references.
Recall that all our representations are finite-dimensional.  The ground field is denoted \(F\), as elsewhere in this article, but no assumption on the characteristic is necessary in this appendix.
\subsection{Representations of direct products}
Let \(G_1\) and \(G_2\) be affine algebraic group schemes over a field \(F\).
\begin{mdprop}\label{reps-of-products}
  If \(E_1\) and \(E_2\) are simple \(G_1\)- and \(G_2\)-modules, respectively, and if \(\End_{G_2}(E_2) = F\), then
  \[
  E_1\otimes E_2
  \]
is a simple \(G_1\times G_2\)-module.  Conversely, if \(\End_{G_2}(E) = F\) for every simple \(G_2\)-module \(E\), then every simple \(G_1\times G_2\)-module is of the above form.
\end{mdprop}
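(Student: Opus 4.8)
The plan is to analyse a \(G_1\times G_2\)-module through its restriction to \(G_2\) together with the residual commuting action of \(G_1\) on multiplicity spaces, the point being that the hypothesis \(\End_{G_2}(E_2)=F\) turns this into a clean correspondence. Concretely, for a fixed simple \(G_2\)-module \(E_2\) with \(\End_{G_2}(E_2)=F\), I would set up the mutually inverse assignments
\[
E_1 \;\longmapsto\; E_1\otimes E_2, \qquad V \;\longmapsto\; \Hom_{G_2}(E_2,V),
\]
the first going from \(G_1\)-modules to \(G_1\times G_2\)-modules (with \(G_1\) acting on \(E_1\) and \(G_2\) on \(E_2\)), the second going the other way (the \(G_1\)-action on \(\Hom_{G_2}(E_2,V)\) coming from the \(G_1\)-action on \(V\), which commutes with \(G_2\)). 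That \(\Hom_{G_2}(E_2,E_1\otimes E_2)\cong E_1\) as \(G_1\)-modules uses \(\End_{G_2}(E_2)=F\) and finite-dimensionality of \(E_1\); conversely, when \(V|_{G_2}\) is \(E_2\)-isotypic the evaluation map \(\Hom_{G_2}(E_2,V)\otimes E_2\to V\) is \(G_1\times G_2\)-equivariant, and counting dimensions (again via \(\End_{G_2}(E_2)=F\)) shows it is an isomorphism. So these assignments give an equivalence between \(G_1\)-modules and those \(G_1\times G_2\)-modules whose restriction to \(G_2\) is \(E_2\)-isotypic.

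For the first statement I would argue as follows. Restricted to \(G_2\), the module \(E_1\otimes E_2\) is a direct sum of \(\dim E_1\) copies of the simple module \(E_2\), hence semisimple and \(E_2\)-isotypic; therefore so is every \(G_1\times G_2\)-submodule \(M\subseteq E_1\otimes E_2\), by the standard fact that a submodule of a semisimple module is again semisimple, with simple summands among those of the ambient module. Applying the correspondence above, \(M\) corresponds to the \(G_1\)-submodule \(\Hom_{G_2}(E_2,M)\) of \(\Hom_{G_2}(E_2,E_1\otimes E_2)=E_1\); it is nonzero since \(M\) is nonzero and \(E_2\)-isotypic, so by simplicity of \(E_1\) it is all of \(E_1\), whence \(M=E_1\otimes E_2\).

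For the converse, let \(V\) be a simple \(G_1\times G_2\)-module and pick a simple \(G_2\)-submodule \(E_2\subseteq V|_{G_2}\) (which exists, \(V\) being nonzero and finite-dimensional). The \(E_2\)-isotypic component of \(V|_{G_2}\) is stable under \(G_1\) --- post-composing a \(G_2\)-equivariant map \(E_2\to V\) with the action of an element of \(G_1\) again yields a \(G_2\)-equivariant map, as \(G_1\) and \(G_2\) commute --- hence it is a nonzero \(G_1\times G_2\)-submodule of \(V\) and so equals \(V\). Thus \(V|_{G_2}\) is \(E_2\)-isotypic, and the correspondence above gives \(V\cong E_1\otimes E_2\) with \(E_1=\Hom_{G_2}(E_2,V)\). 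Finally \(E_1\) is simple over \(G_1\): a proper nonzero \(G_1\)-submodule \(E_1'\subsetneq E_1\) would give, via \(V\cong E_1\otimes E_2\), a proper nonzero \(G_1\times G_2\)-submodule, contradicting simplicity of \(V\).

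The only real care needed --- and I expect this to be the main bookkeeping obstacle rather than a genuine difficulty --- is in handling the group actions on the \(\Hom\)-spaces and checking that the evaluation map is \(G_1\times G_2\)-equivariant and an isomorphism in the isotypic case. One should also note explicitly that \(G_2\)-modules need not be semisimple in general, so the semisimplicity used in the first part is not automatic: it is a consequence of the specific fact that \(E_1\otimes E_2\) restricted to \(G_2\) is a direct sum of copies of the \emph{simple} module \(E_2\).
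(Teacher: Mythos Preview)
Your argument is correct and, for the converse direction, essentially identical to the paper's: both extract a simple \(G_2\)-submodule, identify the \(E_2\)-isotypic part of the socle with \(\Hom_{G_2}(E_2,V)\otimes E_2\) (the paper cites \cite{Jantzen}*{6.15~(2)} for this), observe that it is a \(G_1\times G_2\)-submodule hence all of \(V\), and conclude that \(\Hom_{G_2}(E_2,V)\) is simple over \(G_1\).

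For the first statement the routes diverge slightly in presentation. You apply the \(\Hom_{G_2}(E_2,-)\)/evaluation correspondence uniformly: a nonzero submodule \(M\) is \(E_2\)-isotypic over \(G_2\), so \(M\cong\Hom_{G_2}(E_2,M)\otimes E_2\), and \(\Hom_{G_2}(E_2,M)\) is a nonzero \(G_1\)-submodule of \(E_1\). The paper instead argues directly that, because \(\End_{G_2}(E_2)=F\), the inclusion \(M\hookrightarrow E_1\otimes E_2\) is given by a matrix with entries in \(F\), so \(M=V\otimes E_2\) for a linear subspace \(V\subset E_1\); it then shows \(V\) is \(G_1\)-stable by contracting with a linear form \(\phi\in E_2^\vee\) via \(\id\otimes\phi\). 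These are two packagings of the same idea; yours is arguably tidier since it reuses the machinery already needed for the converse.

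One small caution: your phrase ``post-composing with the action of an element of \(G_1\)'' is informal in the scheme setting, where \(G_1\) need not have enough \(F\)-points. The clean fix is exactly what you already have in hand: define the \(E_2\)-isotypic part as the image of the evaluation map \(\Hom_{G_2}(E_2,V)\otimes E_2\to V\), note this map is \(G_1\times G_2\)-equivariant (with \(G_1\) acting on \(\Hom_{G_2}(E_2,V)\) through its action on \(V\)), and conclude that its image is a \(G_1\times G_2\)-submodule.
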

Over algebraically closed fields, the conditions on endomorphisms become vacuous.  In this case, a proof may be found in \cite{Steinberg:Lectures}*{\S\,12}.
The corresponding statement for Lie algebras, over any field, is included in \cite{BlockZassenhaus}*{\S\,3}.
\begin{proof}
  For the first statement, let \(W\) be any non-zero \(G_1\times G_2\)-submodule of \(E_1\otimes E_2\).  Then, as \(G_2\)-modules, \(E_1\otimes E_2\) and \(W\) are both finite direct sums of copies of \(E_2\).  Moreover, by our assumption on endomorphisms, the inclusion \(W\hookrightarrow E_1\otimes E_2\) may be described by a matrix with coefficients in \(F\).
This implies that, as a \(G_2\)-module, \(W = V\otimes E_2\) for some non-zero vector subspace \(V\) of \(E_1\).
In fact, \(V\) is necessarily a \(G_1\)-submodule of \(E_1\):  any non-zero linear form \(\phi\) on \(E_2\) induces a morphism of \(G_1\)-modules \(\id\otimes\phi\colon E_1\otimes E_2\to E_1\), and the composition \[V\otimes E_2\to E_1\otimes E_2 \to E_1\] is then a morphism of \(G_1\)-modules with image \(V\). As \(E_1\) is simple, \(V\) must be equal to \(E_1\) and, thus, \(W\) must be equal to \(E_1\otimes E_2\).

  For the second statement, let \(M\) be a simple \(G_1\times G_2\)-module.
  Then for any simple \(G_2\)-module \(E\)
  we have an isomorphism of \(G_1\times G_2\)-modules
  \[
  \Hom_{G_2}(E,M)\otimes E\cong (\mathrm{soc}_{G_2} M)_E
  \]
  \cite{Jantzen}*{6.15~(2)}.
  On the left-hand side, the action of \(G_1\times G_2\) on \(\Hom_{G_2}(E,M)\) is induced by its usual action on \(\Hom(E,M)\); it follows that \(G_2\) acts trivially on \(\Hom_{G_2}(E,M)\) and that we may view \(\Hom_{G_2}(E,M)\) as a \(G_1\)-representation.
  The action of \(G_1\times G_2\) on \(E\) is obtained by trivially extending the given action of \(G_2\) on E.
  On the right-hand side, \(\mathrm{soc}_{G_2}M\) denotes the \(G_2\)-socle of \(M\) and \((-)_E\) denotes its \(E\)-isotypical part.
  The claim is, in particular, that \((\mathrm{soc}_{G_2} M)_E\) is a \(G_1\times G_2\)-submodule of \(M\).

  There must be some simple \(G_2\)-module \(E\) for which \((\mathrm{soc}_{G_2}M)_E\) is non-zero \cite{Jantzen}*{2.14~(2)}.
  As \(M\) itself is simple, we find
  \[
  \Hom_{G_2}(E,M)\otimes E\cong M
  \]
  for this \(E\).
  It remains to note that \(\Hom_{G_2}(E,M)\) is simple as a \(G_1\)-module:
  if it contained a proper \(G_1\)-submodule, \(M\) would contain a proper \(G_1\times G_2\)-submodule.
\end{proof}

\subsection{Representations of semi-direct products}
We are mainly interested in semi-direct products with \(\Z/2\): in the orthogonal group \(\OO{2n}\cong \SO{2n}\rtimes\Z/2\) and in the ``extended tori'' \(T\rtimes\Z/2\).  In slightly greater generality, we describe in this section how to obtain the simple representations of a semi-direct product \(H\rtimes\Z/p\) (\(p\) a prime) from the simple representations of \(H\).  All corresponding facts for representations of compact Lie groups may be found in \cite{BtD:Lie}*{VI.7}. The only exception is \Cref{reps-of-sdp:semi-simple}, which we learnt from Skip Garibaldi; see \cite{BGL:LinearPreservers}*{Proposition~2.2}.

Recall that a representation of an affine algebraic group scheme \(G\) consists of an \(F\)-vector space \(V\) together with a natural \(A\)-linear action of \(G(A)\) on \(V\otimes A\) for every \(F\)-algebra \(A\).  Such a representation is completely determined by its restriction to \emph{connected} \(F\)-algebras, so we may assume that \(A\) is connected whenever this is convenient.
In particular, as \((\Z/p)(A) = \Z/p\) for any connected \(A\), a representation of the constant group scheme \(\Z/p\) is the same as a representation of the corresponding abstract group.  An \(H\rtimes \Z/p\)-module is an \(H\)-module which is also a \(\Z/p\)-module, such that
\[
  (x.h).(x.v) = x.(h.v) \quad\in V\otimes A
\]
for all connected \(F\)-algebras \(A\) and all \(x\in \Z/p\), \(h\in H(A)\) and \(v\in V\).
\begin{defn}\label{def:twisted-rep}
  Let \(V\) be a representation of some normal subgroup scheme \(H\subset G\).
  Given an element \(g\in G(F)\), the \define{\(g\)-twisted \(H\)-module \(\twist{g}V \)} has the same underlying \(F\)-vector space \(V\), with \(h\in H(A)\) acting on \(\twist{g}V \otimes A\) as \(g h g^{-1}\) does on \(V\otimes A\).
\end{defn}

\begin{lem}\label{reps-twist}
Let \(V\) be an \(H\rtimes\Z/p\)-module.
If \(U\subset V\) is an \(H\)-submodule, then \(x.U\subset V\) is also an \(H\)-submodule, for any \(x\in\Z/p\), and the action of \(x\) on \(V\) induces an isomorphism of \(H\)-modules
\[
\twist{-x}U\xrightarrow[x]{\cong} x.U.\qedhere
\]
\end{lem}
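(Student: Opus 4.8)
The plan is to translate the statement into a single computation with an $F$-linear automorphism of $V$. Because the $\Z/p$-action on $V$ is part of the representation data and hence given by a natural transformation, the fixed element $x\in(\Z/p)(F)\subseteq(H\rtimes\Z/p)(F)$ acts on $V$ by an $F$-linear automorphism $\sigma\colon V\to V$, and by naturality it acts on $V\otimes A$ as $\sigma\otimes\id_A$ for every (in particular, every connected) $F$-algebra $A$. Thus $x.U$ is literally the linear subspace $\sigma(U)$, and $\sigma$ restricts to an $F$-linear isomorphism $U\xrightarrow{\cong}x.U$ whose inverse is the restriction of $\sigma^{-1}$, that is, of the action of $-x$.

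The key step is to unwind the compatibility axiom $(x.h).(x.v)=x.(h.v)$ of an $H\rtimes\Z/p$-module. Substituting $(-x).h$ for $h$ and using that conjugation by elements of $\Z/p$ is a group action, so that $x.((-x).h)=h$, this axiom becomes, for every connected $A$, every $h\in H(A)$ and every $v\in V\otimes A$,
\[
h.(\sigma v)=\sigma\bigl(((-x).h).v\bigr),
\]
where $\sigma$ now abbreviates $\sigma\otimes\id_A$. Both assertions of the lemma drop out of this identity. First, taking $v\in U\otimes A$, the right-hand side lies in $\sigma(U)\otimes A=(x.U)\otimes A$ because $(-x).h\in H(A)$ and $U$ is an $H$-submodule; hence $(x.U)\otimes A$ is stable under $H(A)$ for every connected $A$, so $x.U$ is an $H$-submodule of $V$. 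Second, reading the same identity for $v\in U\otimes A$ as a statement about $\sigma|_U$, it says exactly that $\sigma|_U$ carries the $H$-action on $\twist{-x}U$---on which, by \Cref{def:twisted-rep}, $h\in H(A)$ acts the way $(-x).h$ acts on $U$---to the $H$-action that $V$ induces on its submodule $x.U$. Together with the first paragraph, $\sigma|_U$ is therefore an isomorphism of $H$-modules $\twist{-x}U\xrightarrow{\cong}x.U$, realised by the action of $x$, as claimed.

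I do not expect a real obstacle: the whole argument is the unwinding of definitions. The two points needing a little care are the bookkeeping of conventions---checking that the source is $\twist{-x}U$ rather than $\twist{x}U$, which is precisely what forces the substitution of $(-x).h$ for $h$ above---and the functoriality: the computation treats $x$ as a fixed group element while $h$ varies over $H(A)$, which is legitimate since, as the text remarks, a morphism of representations is determined by its restriction to connected $F$-algebras, over which $(\Z/p)(A)=\Z/p$ canonically.
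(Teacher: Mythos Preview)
Your proof is correct; the paper itself gives no proof at all (the \(\qedhere\) sits inside the statement), treating the lemma as an immediate consequence of the definitions. Your careful unwinding of the compatibility axiom \((x.h).(x.v)=x.(h.v)\) via the substitution \(h\mapsto(-x).h\) is exactly the computation the paper is implicitly leaving to the reader, and your attention to the sign in \(\twist{-x}U\) versus \(\twist{x}U\) is well placed.
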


\begin{mdprop}\label{reps-of-sdp:main}
Let \(V\) be a simple \(H\rtimes\Z/p\)-module. Then
\[
\begin{aligned}
  &\text{either}\quad && V \text{ is simple as an \(H\)-module} \\
  &\text{or}     && V\cong \oplus_{x\in\Z/p} \twist{x}U \quad \lttext{7cm}{for some simple \(H\)-module \(U\) that does not lift to an \(H\rtimes\Z/p\)-module.}
\end{aligned}
\]
In the second case, the action of \(y\in\Z/p\) on \((u_x)_{x}\in\oplus_x\twist{x}U\) is given by \(y.(u_x)_x = (u_{x+y})_x\).
\end{mdprop}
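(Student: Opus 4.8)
The plan is to run the classical Clifford-theory argument for the normal subgroup \(H\trianglelefteq H\rtimes\Z/p\) with cyclic quotient of prime order, using \Cref{reps-twist} throughout to track how the generator of \(\Z/p\) moves \(H\)-submodules of \(V\) around.

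First I would show that \(V\) is semisimple over \(H\): the \(H\)-socle \(\mathrm{soc}_H V\) (the sum of all simple \(H\)-submodules) is a nonzero \(H\)-submodule, and since the generator \(g\in\Z/p\) acts on \(V\) by a bijection sending simple \(H\)-submodules to simple \(H\)-submodules (\Cref{reps-twist}), it satisfies \(g.\mathrm{soc}_H V=\mathrm{soc}_H V\); hence \(\mathrm{soc}_H V\) is an \(H\rtimes\Z/p\)-submodule and, by simplicity of \(V\), equals \(V\). Next I would decompose \(V=\bigoplus_{[U]\in\mathcal S}V_{[U]}\) into \(H\)-isotypical components, \(\mathcal S\) being the finite set of isomorphism classes of simple \(H\)-modules occurring. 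By \Cref{reps-twist} the generator carries \(V_{[U]}\) isomorphically onto \(V_{[\twist{-1}U]}\), so \(\Z/p\) permutes \(\mathcal S\), and the sum of the isotypical components over any orbit is \(\Z/p\)-stable, hence an \(H\rtimes\Z/p\)-submodule. Simplicity of \(V\) forces \(\mathcal S\) to be a single \(\Z/p\)-orbit, so \(\abs{\mathcal S}\in\{1,p\}\) because \(p\) is prime.

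If \(\abs{\mathcal S}=p\), I would fix a simple \(H\)-submodule \(U\subseteq V_{[U]}\) for one class; then the submodules \(x.U\) (\(x\in\Z/p\)) are isomorphic as \(H\)-modules to the \(p\) distinct classes \(\twist{-x}U\), so their sum is direct, and being \(\Z/p\)-stable and \(H\)-stable it equals \(V\). Via \Cref{reps-twist} this gives the isomorphism \(V\cong\bigoplus_{x\in\Z/p}\twist{x}U\) with the stated \(\Z/p\)-action, and \(U\) cannot lift to an \(H\rtimes\Z/p\)-module, for a lift would yield \(\twist{-1}U\cong U\) by \Cref{reps-twist}, contradicting \(\abs{\mathcal S}=p\). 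In the remaining case \(\abs{\mathcal S}=1\) — which I expect to be the crux — one has \(\twist{1}U\cong U\) and \(V\) is \([U]\)-isotypical; here the argument splits according to whether \(U\) extends to an \(H\rtimes\Z/p\)-module. If it does not, the fact established above that \(V\) is generated over \(H\) by the \(\Z/p\)-orbit of any single simple submodule, together with the non-existence of an extension, should pin down \(V\cong U^{\oplus p}=\bigoplus_{x\in\Z/p}\twist{x}U\), landing in the second alternative again. If it does extend, say to \(\tilde U\), then the evaluation map identifies \(V\) with \(\tilde U\otimes_F W\) for \(W:=\Hom_H(\tilde U,V)\) a simple \(\Z/p\)-module (simple because \(V\) is simple over \(H\rtimes\Z/p\)); and since for \(p=2\) — the only case needed in the body of the paper — every simple \(\Z/p\)-module over \(F\) is one-dimensional, this gives \(V\cong\tilde U\), simple over \(H\). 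The delicate points are thus confined entirely to this fixed-orbit case: one must keep track of the extension obstruction for \(U\) and of the structure of \(\Z/p\)-modules over \(F\).
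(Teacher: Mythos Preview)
Your argument is sound through the \(\abs{\mathcal S}=p\) case, but the \(\abs{\mathcal S}=1\) case---which you rightly flag as the crux---is left genuinely incomplete. In your sub-case~(a) you offer no mechanism by which the non-existence of an extension forces \(V\cong U^{\oplus p}\); ``should pin down'' is not an argument, and nothing you have established so far bounds the multiplicity of \(U\) in the isotypical module \(V\). In your sub-case~(b) the evaluation isomorphism \(V\cong\tilde U\otimes_F\Hom_H(\tilde U,V)\) tacitly assumes \(\End_H(U)=F\), which the proposition does not, and even granting that hypothesis you conclude only for \(p=2\).

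The paper avoids this whole case split by tracking the \(\Z/p\)-orbit of a chosen simple \(H\)-submodule \(U\subset V\) \emph{as a subspace of \(V\)}, not as an isomorphism class. Because \(\Z/p\) has prime order, either \(x.U=U\) for every \(x\)---and then \(U\) is already \(H\rtimes\Z/p\)-stable, so \(V=U\) is simple over \(H\)---or the \(p\) subspaces \(x.U\) are pairwise distinct simple \(H\)-submodules. In the second case \(\sum_x x.U\) is an \(H\rtimes\Z/p\)-submodule, hence equals \(V\), and the paper argues this sum is direct; non-liftability of \(U\) then follows because any lift would furnish a diagonal monomorphism \(u\mapsto(x.u)_x\) of \(U\) into \(\bigoplus_x\twist{x}{U}\cong V\), contradicting simplicity of \(V\). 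The observation you are missing is precisely this: whether \(x.U\) equals \(U\) \emph{as a subspace} already separates the two alternatives of the proposition, with no need to ask whether \(\twist{1}{U}\cong U\) as an abstract \(H\)-module or whether \(U\) extends. In particular your problematic \(\abs{\mathcal S}=1\) scenario with \(V|_H\) not simple is absorbed into the paper's ``distinct subspaces'' branch and handled uniformly.
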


\begin{proof}
By \cite{Jantzen}*{2.14~(2)}, we can find a simple \(H\)-submodule \(U\subset V\). Then
\(
\sum_x x.U
\)
is also an \(H\)-submodule of \(V\),
and in fact an \(H\rtimes\Z/p\)-submodule.
As \(V\) is simple, it follows that
  \[
  \sum_x x.U = V.
  \]
The stabilizer of \(U\) for the action of \(\Z/p\) on the set of \(F\)-sub-vector spaces \(\{x.U\}\) is either trivial or the whole group, so either all \(x.U\) are equal as \(F\)-sub-vector spaces or they are all distinct. In the first case, we have \(V_{|H} = U\), so \(V_{|H}\) is a simple \(H\)-module. In the second case, the \(H\)-submodules \(x.U\) must intersect trivially, so the above sum is direct and we have an isomorphism of \(H\rtimes\Z/p\)-modules
\[
\begin{aligned}
\bigoplus_x \twist{x}U &\xrightarrow{\cong} \bigoplus_x x.U = V  \\
(u_x)_x &\mapsto (x.u_{-x})_x,
\end{aligned}
\]
with \(\Z/p\) acting on the left-hand side as described in the \namecref{reps-of-sdp:main}.
Finally, suppose that the \(H\)-module structure on \(U\) was the restriction of some \(H\rtimes\Z/p\)-module structure. Then we could define a monomorphism of \(H\rtimes\Z/p\)-modules
\[
\begin{aligned}
  U &\hookrightarrow \bigoplus_x \twist{x} U\\
  u &\mapsto (x.u)_x,
\end{aligned}
\]
contradicting the simplicity of \(V\).
\end{proof}

It remains to determine which simple \(H\)-modules do lift to \(H\rtimes\Z/p\)-modules, and in how many ways.

\begin{mdlem}\label{reps-of-sdp:lifts}
A lift of an \(H\)-module structure on an \(F\)-vector space \(U\) to an \(H\rtimes\Z/p\)-module structure corresponds to an isomorphism of \(H\)-modules
\[
\phi\colon U\xrightarrow{\cong}\twist{1}U
\]
such that \(\phi^p\colon U\xrightarrow\cong\twist{1}U\xrightarrow\cong\twist{2}U\xrightarrow{\cong}\cdots\xrightarrow{\cong}\twist{p}U = U\) is the identity.
\end{mdlem}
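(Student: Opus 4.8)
The plan is to unwind the definitions. Since \(F\) is a field and hence connected, \((\Z/p)(F) = \Z/p\), so the generator \(1\in\Z/p\) is a genuine element of \((H\rtimes\Z/p)(F)\) and the twisted module \(\twist{1}U\) of \Cref{def:twisted-rep} is defined. By definition, an \(H\rtimes\Z/p\)-module structure on \(U\) restricting to the given \(H\)-module structure is a \(\Z/p\)-module structure on \(U\) satisfying the compatibility axiom \((x.h).(x.v) = x.(h.v)\); and a \(\Z/p\)-module structure, for the constant group scheme \(\Z/p\), is the same as an action of the abstract group \(\Z/p\), which is pinned down by the single linear automorphism \(\phi\) of \(U\) by which \(1\) acts, subject only to \(\phi^p = \id\). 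So I would set up the correspondence by sending a lift to this operator \(\phi\), and conversely sending an \(H\)-module isomorphism \(\phi\colon U\xrightarrow{\cong}\twist{1}U\) with \(\phi^p = \id\) to the \(\Z/p\)-action in which \(k\) acts by \(\phi^k\); what remains is to translate the two conditions.

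First I would reduce the axiom \((x.h).(x.v) = x.(h.v)\) --- for all connected \(F\)-algebras \(A\), all \(x\in\Z/p\), \(h\in H(A)\), \(v\in U\otimes A\) --- to its single instance \(x = 1\). This is a short induction on \(x\): conjugation by \(x+1\) in the semidirect product equals conjugation by \(1\) after conjugation by \(x\) (the action of \(\Z/p\) on \(H\) being by group automorphisms), and likewise \(x+1\) acts on \(U\otimes A\) the way \(1\) does after \(x\) does; so the inductive step follows by applying the \(x=1\) instance to the pair \(x.h\), \(x.v\) and then the inductive hypothesis to \(h\), \(v\). Next I would identify the \(x=1\) instance with \(H\)-equivariance of \(\phi\) as a map \(U\to\twist{1}U\). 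Indeed, by \Cref{def:twisted-rep}, \(h\in H(A)\) acts on \(\twist{1}U\otimes A\) as \(1\cdot h\cdot 1^{-1} = 1.h\) acts on \(U\otimes A\), so \(H\)-equivariance of \(\phi\) reads \((\phi\otimes A)(h.v) = (1.h).(\phi\otimes A)(v)\), which is precisely the \(x=1\) axiom. Applying the twist functor \(\twist{k}{(-)}\) to \(\phi\) shows, moreover, that the same underlying linear map is an \(H\)-module morphism \(\twist{k}U\to\twist{k+1}U\) for every \(k\); hence the composite \(U\xrightarrow{\phi}\twist{1}U\to\cdots\to\twist{p}U = U\) of the lemma has underlying linear map \(\phi^p\), and --- as \(\twist{p}U = \twist{0}U = U\) --- demanding that this composite be the identity is exactly demanding \(\phi^p = \id\) as an automorphism of \(U\).

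Assembling these gives both directions simultaneously: a lift yields, through the action of \(1\), an automorphism \(\phi\) with \(\phi^p=\id\) that is \(H\)-equivariant \(U\to\twist{1}U\) (an \(H\)-module isomorphism being then automatic); conversely, an \(H\)-module isomorphism \(\phi\colon U\xrightarrow{\cong}\twist{1}U\) with \(\phi^p=\id\) gives a well-defined \(\Z/p\)-action (\(k\) acting by \(\phi^k\)), and the two steps above show that this \(\Z/p\)-action together with the original \(H\)-action satisfies the compatibility axiom, so defines an \(H\rtimes\Z/p\)-module. These assignments are visibly mutually inverse. The only real work is careful bookkeeping with the conjugation conventions of the semidirect product and with \Cref{def:twisted-rep}; I expect the two points most in need of care to be the reduction to the case \(x = 1\) and the observation that the twist functor acts as the identity on the underlying linear map \(\phi\), though neither is deep.
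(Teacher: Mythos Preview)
Your proposal is correct and follows exactly the paper's approach: send a lift to the action of \(1\in\Z/p\), and conversely let \(x\) act by \(\phi^x\). The paper's proof is a two-line sketch that omits the verifications (reduction to \(x=1\), identification of the compatibility axiom with \(H\)-equivariance of \(\phi\colon U\to\twist{1}U\), and the reading of \(\phi^p=\id\)) that you have supplied.
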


\begin{proof}
Given an \(H\rtimes\Z/p\)-module structure on \(U\) lifting the given \(H\)-module structure, take \(\phi\) to be the action of \(1\in\Z/p\). Conversely, given an isomorphism \(\phi\) as described, define the \(\Z/p\)-action on \(u\in U\) by \(x.u := \phi^x.u\).
\end{proof}

\begin{rem}
Suppose \(U\) is a simple \(H\)-module \emph{over an algebraically closed field of characteristic not \(p\)}. Then
\(U\) can be lifted to an \(H\rtimes\Z/p\)-module if and only if \(U\cong U^1\) through an arbitrary isomorphism.
Indeed, as \(F\) is algebraically closed, \(\mathrm{End}_F(H) = F\), so \(\phi^p\) is a scalar. Moreover, \(F\) contains primitive \(p^{\text{th}}\) roots,  so we can normalize \(\phi\) such that \(\phi^p=1\).
\end{rem}

\begin{mdlem}\label{reps-of-sdp:different-lifts}
  If an \(H\)-module \(U\) can be lifted to an  \(H\rtimes\Z/p\)-module \(\hat U\), then
  \[
  \hat U\otimes e^\lambda
  \]
  is also a lift for any character \(e^\lambda\) of \(\Z/p\).
  If, moreover, \(\End_H(U) = F\), all lifts of \(U\) are of this form.
\end{mdlem}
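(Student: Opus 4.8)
The plan is to derive both assertions from the description of lifts in \Cref{reps-of-sdp:lifts}, together with the observation that a character \(e^\lambda\) of the constant group scheme \(\Z/p\) over \(F\) is the same datum as a \(p\)-th root of unity \(c_\lambda\in F\) (namely the image of a fixed generator \(1\in\Z/p\)), and that we may inflate \(e^\lambda\) to an \(H\rtimes\Z/p\)-module by letting \(H\) act trivially. Throughout, I fix the generator \(1\in\Z/p\) and identify an \(H\rtimes\Z/p\)-module structure on the \(H\)-module \(U\) with the linear automorphism of the underlying vector space by which \(1\) acts; by \Cref{reps-of-sdp:lifts} such structures correspond precisely to \(H\)-module isomorphisms \(\phi\colon U\xrightarrow{\cong}\twist{1}U\) with \(\phi^p = \id\).

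For the first statement, suppose \(\hat U\) is a lift of \(U\), corresponding to such a \(\phi\). Since \(e^\lambda\) is trivial over \(H\), the module \(\hat U\otimes e^\lambda\) still restricts to \(U\) over \(H\), and \(1\in\Z/p\) acts on it by \(c_\lambda\phi\). This is again an \(H\)-module isomorphism \(U\xrightarrow{\cong}\twist{1}U\), being a scalar multiple of \(\phi\), and \((c_\lambda\phi)^p = c_\lambda^p\phi^p = \id\); hence \(\hat U\otimes e^\lambda\) is a lift of \(U\).

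For the second statement, assume \(\End_H(U) = F\) and let \(\hat U'\) be an arbitrary lift, corresponding to an isomorphism \(\psi\colon U\xrightarrow{\cong}\twist{1}U\) with \(\psi^p = \id\). The composite \(\psi^{-1}\circ\phi\) is an \(H\)-module automorphism of \(U\), hence is multiplication by some \(c\in F^\times\), so \(\psi = c^{-1}\phi\) as linear maps. Taking \(p\)-th powers yields \(\id = \psi^p = c^{-p}\phi^p = c^{-p}\id\), so \(c^{-1}\) is a \(p\)-th root of unity and therefore equals \(c_\lambda\) for a (unique) character \(e^\lambda\) of \(\Z/p\). By the computation above, \(1\in\Z/p\) acts on \(\hat U\otimes e^\lambda\) by \(c^{-1}\phi = \psi\), which is exactly the module structure of \(\hat U'\); hence \(\hat U'\cong\hat U\otimes e^\lambda\).

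This is essentially bookkeeping, and I do not expect a serious obstacle. The one place that rewards care is the identification of \(\phi^p\) with the literal \(p\)-th power of the linear automorphism \(\phi\) on the fixed underlying vector space, which is what lets scalars factor out as \(p\)-th powers; and it is worth noting that, unlike the Remark preceding the lemma, the present argument needs no hypothesis on \(F\) (no algebraic closure, no \(p\)-th roots of unity): the assumption \(\End_H(U) = F\) is precisely what forces the ratio of two lifts to be a scalar, and that scalar is automatically a \(p\)-th root of unity.
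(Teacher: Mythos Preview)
Your proof is correct and follows essentially the same route as the paper's: both invoke \Cref{reps-of-sdp:lifts} to translate lifts into isomorphisms \(\phi\colon U\to\twist{1}U\) with \(\phi^p=\id\), observe that tensoring with \(e^\lambda\) multiplies \(\phi\) by the corresponding \(p\)-th root of unity, and use \(\End_H(U)=F\) to see that any two such isomorphisms differ by a scalar which is then forced to satisfy \(c^p=1\). The only cosmetic difference is that the paper writes down an explicit isomorphism \(U^\psi\otimes e^\lambda\xrightarrow{\cong}U^\phi\), whereas you note (equivalently) that the two module structures literally coincide on the underlying vector space.
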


\begin{proof}
  The first claim is clear since \(e^\lambda_{|H}\) is the trivial representation.
  For the second statement, suppose \(\phi\) and \(\psi\) are two different isomorphisms \(U\cong\twist{1}U\) such that \(\phi^p = \psi^p  = \id\). Then \(\psi^{-1}\circ\phi\) is an automorphism of the \(H\)-module \(U\), hence under the assumption \(\End_H(U)= F\) it is given by multiplication with some scalar \(\lambda\in F\).  Moreover, \(\lambda^p = 1\).  Writing \(U^\phi\) and \(U^\psi\) for the lifts of \(U\) determined by \(\phi\) and \(\psi\), and writing \(e^\lambda\) for the character of \(\Z/p\) defined by \(\lambda\), we have an isomorphism of \(H\rtimes\Z/p\)-modules
\begin{align*}
    U^{\psi}\otimes e^\lambda&\xrightarrow{\cong}U^\phi\\
   u&\mapsto \lambda^{-1} u\qedhere
\end{align*}
\end{proof}

\begin{rem}
  For \(H\rtimes\Z/p\)-modules of the form \(V=\bigoplus_{x\in\Z/p}\twist{x}U\), we have isomorphisms of \(H\rtimes\Z/p\)-modules
  \[
  V\otimes e^\lambda \xrightarrow{\cong} V
  \]
  given by \((u_x)_x \mapsto  (\lambda^x u_x)_x\), for any character \(e^\lambda\) of \(\Z/p\).
\end{rem}

When \(H\) is split semi-simple, the preceding lemmas can be simplified as follows.
Fix a maximal split torus and a Borel subgroup \(T\subset B\subset H\).  Let \(T^*:= \Hom(T,\Gm)\) be the character lattice, and let \(\Delta\subset T^*\) be the set of simple roots corresponding to our choice of \(B\).
Any automorphism \(\alpha\) of \(H\) determines a unique automorphism \(\alpha^*\) of \(T^*\) that restricts to a permutation of \(\Delta\).  Indeed, for any such \(\alpha\), there exists an element \(h\in H(F)\) such that the composition \(\alpha'\) of \(\alpha\) with conjugation by \(h\) stabilizes \(T\) and \(B\) \cite{Borel:LAG}*{19.2, 20.9(i)}. This automorphism \(\alpha'\) is well-defined up to conjugation by an element of \(T(F)\) and thus induces a well-defined automorphism \(\alpha^*\) of \(T^*\).  Of course, \(\alpha^*\) is completely determined by its restriction to \(\Delta\), which is, in fact, an automorphism of the Dynkin diagram of \(H\).

\begin{cor}\label{reps-of-sdp:semi-simple}
Let \(U\) be an irreducible representation of a split semi-simple algebraic group scheme \(H\) over \(F\).
  Suppose that the generator \(1\in \Z/p\) acts on \(H\) via an automorphism \(\alpha\).
\begin{compactitem}
\item[(i)] The representation \(U\) can be lifted to a representation \(\hat U\) of \(H\rtimes\Z/p\) if and only if the induced automorphism \(\alpha^*\) of \(T^*\) fixes the highest weight of \(U\).
\end{compactitem}
Moreover, if a lift \(\hat U\) exists, then:
\begin{compactitem}
\item[(ii)] All other lifts are of the form \(\hat U \otimes e^{\lambda}\), where \(\lambda\) is a character of \(\Z/p\).
\item[(iii)] The lifts \(\hat U \otimes e^{\lambda}\) for different characters \(\lambda\) are all distinct.
\end{compactitem}
\end{cor}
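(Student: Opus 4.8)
The plan is to deduce all three assertions from \Cref{reps-of-sdp:lifts} and \Cref{reps-of-sdp:different-lifts}, using two standard inputs about a split semisimple group: that $\End_H(U)=F$ for every simple $H$-module $U$, and that $\alpha$ may be assumed to stabilize the chosen pair $(B,T)$. The latter is a harmless reduction, which I would justify by replacing $\alpha$ with a conjugate automorphism $\iota_g\alpha\iota_g^{-1}$: this does not change the induced permutation of $\Delta$ (that permutation is precisely how $\alpha^*$ was defined), and it replaces $H\rtimes\Z/p$ by an isomorphic group, via $(h,x)\mapsto(ghg^{-1},x)$. Once $\alpha$ stabilizes $(B,T)$, twisting by $\alpha$ permutes the $T$-weight spaces through $\alpha^*$ and preserves dominance, so $\twist{1}U$ is again simple, with highest weight the image of $\mu$ under $\alpha^*$ (with the appropriate variance convention; since $\alpha^*$ has finite order this is immaterial). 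In particular $U\cong\twist{1}U$ as $H$-modules if and only if $\alpha^*$ fixes the highest weight $\mu$ of $U$.

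For part~(i), the ``only if'' direction is immediate from \Cref{reps-of-sdp:lifts}: a lift of $U$ in particular supplies an $H$-isomorphism $U\xrightarrow{\cong}\twist{1}U$, whence $\alpha^*\mu=\mu$. For the converse I would argue as follows. Assume $\alpha^*\mu=\mu$. Then the one-dimensional $\mu$-weight line $\ell\subset U$ is simultaneously the $\mu$-weight line of every twist $\twist{i}U$, so any $H$-isomorphism $\phi_0\colon U\to\twist{1}U$ carries $\ell$ to $\ell$, and after rescaling by a scalar we may take $\phi_0$ to restrict to the identity on $\ell$. The composite $\phi_0^p\colon U\to\twist{1}U\to\cdots\to\twist{p}U=U$ (here $\twist{p}U=U$ because $\alpha^p=\id$) is then an $H$-endomorphism of $U$ that is the identity on $\ell$, hence equals $\id_U$ because $\End_H(U)=F$. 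By \Cref{reps-of-sdp:lifts}, such a $\phi_0$ defines a lift $\hat U$.

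Part~(ii) is then exactly the second assertion of \Cref{reps-of-sdp:different-lifts}, which applies since $\End_H(U)=F$. For part~(iii) I would note that, because $\alpha$ fixes $T$ and $\alpha^*$ fixes $\mu$, the line $\ell$ is $\Z/p$-stable inside $\hat U$, and the generator of $\Z/p$ acts on it by some fixed scalar $\zeta$; hence it acts on the corresponding line inside $\hat U\otimes e^{\lambda}$ by $\zeta\cdot\lambda(1)$. An isomorphism $\hat U\otimes e^{\lambda}\cong\hat U\otimes e^{\lambda'}$ of $H\rtimes\Z/p$-modules restricts to a $\Z/p$-equivariant isomorphism of these one-dimensional weight lines, which forces $\lambda(1)=\lambda'(1)$ and therefore $\lambda=\lambda'$.

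I expect the main obstacle to be the normalization in the converse of~(i). Over an algebraically closed field of characteristic prime to $p$ one argues, as in the remark following \Cref{reps-of-sdp:lifts}, that $\phi_0^p$ is a scalar which can be absorbed by rescaling $\phi_0$ with a $p$-th root; over a general field $F$ need not contain such a root, so that argument breaks down. The point that rescues the general case is precisely that split semisimplicity provides a canonical one-dimensional, $T$-stable highest weight line on which $\phi_0$ can be rigidified, after which $\phi_0^p=\id$ is \emph{forced} by $\End_H(U)=F$ rather than extracted by taking a root. The only other point that needs a little care is the reduction to $\alpha$-stable $(B,T)$ indicated in the first paragraph.
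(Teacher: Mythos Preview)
Your argument is correct and follows the same strategy as the paper: both deduce (ii) from \Cref{reps-of-sdp:different-lifts} and handle (i) and (iii) by normalizing $\phi$ on the one-dimensional highest-weight line, so that $\phi^p=\id$ is forced by $\End_H(U)=F$ rather than obtained by extracting a $p$-th root. The only difference is organizational: you first reduce to the case where $\alpha$ stabilizes $(B,T)$ by conjugating $\alpha$ inside $\mathrm{Aut}(H)$ and passing to the isomorphic semidirect product, whereas the paper keeps $\alpha$ and instead transports the weight analysis along the $H$-isomorphism between $\twist{1}{U}$ and the twist of $U$ by $\alpha'$ (twists by automorphisms that differ by an inner automorphism are isomorphic $H$-modules), thereby achieving the same identification of highest-weight lines without altering the ambient semidirect product.
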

\begin{proof}
  \begin{asparaitem}
  \item[\it (i)] Let \(\vec\mu\) be the highest weight of \(U\).
  We claim that \(\twist{1}U\) is the simple \(H\)-module with highest weight \(\alpha^*(\vec\mu)\).
  Indeed, suppose first that \(\alpha\) can be restricted to both \(B\) and \(T\), \ie that \(\alpha=\alpha'\) in the notation above.  Then for any character \(\vec\nu\in T^*\), \(\twist{1}e^{\vec\nu} = e^{\alpha^*{\vec\nu}}\).
  Moreover, as \(\alpha^*\) sends simple roots to simple roots, it preserves the induced partial order on \(T^*\).
  So the claim follows.
  In general, the same argument applies to \(\twist{1'}U\), the twist of \(U\) by \(1\in\Z/p\) acting via \(\alpha'\).
  As \(\alpha\) and \(\alpha'\) differ only by an inner automorphism of \(H\), this representation is isomorphic to \(\twist{1}U\).

  We thus find that an isomorphism \(\phi\colon U \to \twist{1}U\) exists if and only if \(\alpha^*\) fixes \(\vec\mu\).  It remains to show that such an isomorphism will always satisfy the condition named in \Cref{reps-of-sdp:lifts}.  To see this, we consider the inclusion
  \[
  \Hom_H(U,\twist{1}U) \hookrightarrow \Hom_T(U, \twist{1}U).
  \]
  When \(U\) and \(\twist{1}U\) are isomorphic, both contain \(e^{\vec\mu}\) with multiplicity one, so the right-hand side contains \(\End_T(e^{\vec\mu}) = F\) as a direct factor. If we choose our \(H\)-equivariant isomorphism \(\phi\) such that \(\phi\) restricts to the identity on \(e^{\vec\mu}\), then \(\phi^p\) will also restrict to the identity on \(e^{\vec\mu}\).  On the other hand, \(\phi^p\) will be multiplication with a scalar in any case, so for such choice of \(\phi\) it will be the identity.
  \item[\it (ii)] follows directly from \Cref{reps-of-sdp:different-lifts}.
  \item[\it (iii)] The proof of {\it (i)} shows that we may choose \(\hat U\) such that the restriction of the \(\Z/p\)-action to \(e^{\vec\mu}\subset \hat U\) is trivial. Then the restriction of the \(\Z/p\)-action on \(\hat U\otimes e^{\lambda}\) to \(e^{\vec\mu}\) is given by \(\lambda\).  So the lifts \(\hat U\otimes e^{\lambda}\) for different \(\lambda\) are non-isomorphic.\qedhere
  \end{asparaitem}
\end{proof}

In the final lemma, we write \(\dimend_G(E)\) for the dimension of \(\End_G(E)\) over \(F\).

\begin{mdlem}[Endomorphism rings]\label{reps-of-sdp:endo}
Let \(H\) be an arbitrary affine algebraic group scheme over \(F\), and let \(V\) be a simple \(H\rtimes\Z/p\)-module. Then
\[
\begin{aligned}
\dimend_{H\rtimes\Z/p}(V) &\leq \dimend_H(V) &&\text{ if \(V_{|H}\) is simple}\\
\dimend_{H\rtimes\Z/p}(V) &= \dimend_H(U) && \text{ if \(V_{|H}\cong\textstyle\bigoplus_x\twist{x}U\) such that \(U\not\cong \twist{1}U\)}\\
\dimend_{H\rtimes\Z/p}(V) &= p\cdot\dimend_H(U) && \text{ if \(V_{|H}\cong\textstyle\bigoplus_x\twist{x}U\) such that \(U \cong \twist{1}U\)}
\end{aligned}
\]
Note that the isomorphism \(U\cong\twist{1}U\) in the last statement necessarily violates the condition of \Cref{reps-of-sdp:lifts}.
\end{mdlem}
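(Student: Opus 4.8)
The plan is to work throughout with the identification $\End_{H\rtimes\Z/p}(V)=\End_H(V)^{\Z/p}$, where $\Z/p$ acts on the $F$-algebra $\End_H(V)$ by conjugation via the given action on $V$: an $H$-equivariant $F$-linear endomorphism of $V$ is $H\rtimes\Z/p$-equivariant precisely when it commutes with the generator $t$ of $\Z/p$, i.e.\ when it lies in this fixed subspace. The first case is then immediate, since $\End_{H\rtimes\Z/p}(V)$ is an $F$-linear subspace of $\End_H(V)$, so its dimension is at most $\dimend_H(V)$.

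For the remaining two cases I would feed in the decomposition $V_{|H}\cong\bigoplus_{x\in\Z/p}\twist{x}U$ supplied by \Cref{reps-of-sdp:main}, which gives $\End_H(V)\cong\bigoplus_{x,y\in\Z/p}\Hom_H(\twist{x}U,\twist{y}U)$. By Schur's lemma such a summand is non-zero exactly when $\twist{x}U\cong\twist{y}U$, i.e.\ when $U\cong\twist{y-x}U$, in which case it is isomorphic as an $F$-vector space to $D:=\End_H(U)$; and since $p$ is prime, an isomorphism $U\cong\twist{k}U$ with $k\not\equiv0\bmod p$ iterates to an isomorphism $U\cong\twist{1}U$. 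Thus in the second case only the $p$ diagonal summands ($x=y$) survive, whereas in the third case all $p^2$ summands are non-zero.

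It then remains to count the dimension of the fixed subspace. Since $t$ permutes the summands $\twist{x}U$ of $V_{|H}$ cyclically, conjugation by $t$ shifts the index set $\{(x,y)\}$ by $(x,y)\mapsto(x+1,y+1)$ (up to a harmless sign) and carries the corresponding blocks of $\End_H(V)$ into one another by linear isomorphisms whose $p$-fold composite equals conjugation by $t^p=\id_V$, i.e.\ the identity. This $\Z/p$-action on $\{(x,y)\}$ is free, with orbits parametrised by the difference $y-x\in\Z/p$; over each orbit the fixed subspace contributes exactly one copy of $D$ (the component over one element of the orbit may be chosen freely and then determines the others consistently). In the second case the surviving blocks form the single orbit $y-x=0$, so $\dimend_{H\rtimes\Z/p}(V)=\dim_F D=\dimend_H(U)$; in the third case there are $p$ orbits, so $\dimend_{H\rtimes\Z/p}(V)=p\cdot\dimend_H(U)$. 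The main obstacle is the bookkeeping in this last step — pinning down how conjugation by $t$ acts on the block decomposition of $\End_H(V)$ and what its orbit structure is — but this is elementary once \Cref{reps-of-sdp:main} and Schur's lemma are in place. Finally, the parenthetical remark in the statement is immediate: an isomorphism $U\cong\twist{1}U$ satisfying the condition of \Cref{reps-of-sdp:lifts} would exhibit $U$ as an $H\rtimes\Z/p$-submodule of $V$, contradicting the simplicity of $V$.
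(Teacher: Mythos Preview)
Your argument is correct, but it takes a different route from the paper's. The paper identifies \(V\) with \(\ind_H^{H\rtimes\Z/p}(U)\) and then applies Frobenius reciprocity in one line:
\[
\dimend_{H\rtimes\Z/p}(V)=\dimhom_{H\rtimes\Z/p}(\ind U,\ind U)=\dimhom_H\bigl((\ind U)_{|H},U\bigr)=\sum_{x\in\Z/p}\dimhom_H(\twist{x}U,U),
\]
after which the two cases drop out immediately according to whether only the \(x=0\) summand survives or all \(p\) do. Your approach instead expands the full \(p\times p\) block decomposition of \(\End_H(V)\) and computes the \(\Z/p\)-fixed subspace by orbit counting, using that conjugation by \(t\) permutes the blocks \(\Hom_H(\twist{x}U,\twist{y}U)\) freely in orbits indexed by \(y-x\), with trivial \(p\)-fold monodromy since \(t^p=\id_V\). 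The paper's argument is shorter and conceptually cleaner---Frobenius reciprocity collapses the \(p^2\) blocks to \(p\) in a single stroke---while yours is more elementary and self-contained, avoiding any appeal to induction\slash restriction adjunctions. Both rest on the same Schur-type input, and your observation that \(U\cong\twist{k}U\) for any \(k\not\equiv 0\) forces \(U\cong\twist{1}U\) (by primality of \(p\)) is exactly what the paper implicitly uses to make its dichotomy exhaustive. The parenthetical ``up to a harmless sign'' in your description of the block permutation is unnecessary: with the explicit \(\Z/p\)-action from \Cref{reps-of-sdp:main}, conjugation by \(t\) sends the \((y,x)\)-block literally to the \((y+1,x+1)\)-block with no twist.
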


\begin{proof}
The first statement is clear since \(\End_{H\rtimes\Z/p}(V)\subset \End_H(V)\).
For the other two cases, note that \(V\cong \ind_H^{H\rtimes\Z/p}(U)\).
Indeed, for any \(H\)-module \(U\) we have an isomorphism of \(H\rtimes\Z/p\)-modules
\[
\begin{aligned}
   \ind_H^{H\rtimes\Z/p}(U) \xrightarrow{\cong} \mathrm{Mor}(\Z/p, U_a) \cong \bigoplus_x\twist{x}U
\end{aligned}
\]
\cite{Jantzen}*{3.8~(3)}, with \(y\in\Z/p\) acting on the right-hand side by \(y.(u_x)_x = (u_{x+y})_x\).
 Explicitly, this isomorphism can be described as follows:
The representation \(\ind U\) can be identified with a subspace of the vector space of  natural transformations from \(H\rtimes\Z/p\) to \(U_a\), with the action of \(g\in (H\rtimes\Z/p)(A)\) on such a transformation \(F\) given by \((g.F) := F\circ g^{-1}\) \cite{Jantzen}*{3.3~(2)}. We send \(F\) to \((F(1,-x))_x\in\bigoplus_x\twist{x}U\).

Given the identification of \(V\) with \(\ind U\), the \namecref{reps-of-sdp:endo} follows from a straight-forward calculation:
\[
\begin{aligned}
   \dimend_{H\rtimes\Z/p}(V)
   &= \dimhom_{H\rtimes\Z/p}(\ind U, \ind U)
   = \dimhom_H((\ind U)_{|H}, U) \\
   &= \dimhom_H(\textstyle\bigoplus_x\twist{x}U, U)
   = \sum_x \dimhom_H(\twist{x}U, U)
\end{aligned}
\]
If the \(\twist{x}U\) are non-isomorphic, then only the summand \(\dimend_H(U)\) corresponding to the trivial twist is non-zero.
Otherwise, all \(p\) summands are equal to \(\dimend_H(U)\).
\end{proof}

\subsection*{Acknowledgements}
My interest in \(\lambda\)-structures on Grothendieck-Witt rings stems from conversations with Pierre Guillot.  I thank Lars Hesselholt for drawing my attention to Borger's philosophy on \(\lambda\)-rings, Jo\"el Riou and Skip Garibaldi for suggesting several improvements in the appendix, and  Jim Humphreys for some references.

\begin{bibdiv}
  \begin{biblist}
    \newcommand*{\doi}[1]{\href{http://dx.doi.org/#1}{\tiny{\sffamily [DOI]}}}
    \newcommand*{\arxiv}[1]{\href{http://arxiv.org/abs/#1}{arXiv:#1}}
    \newcommand*{\mrev}[1]{\href{http://www.ams.org/mathscinet-getitem?mr=#1}{\tiny{\sffamily [MR#1]}}}
    \newcommand*{\zbl}[1]{\href{https://zbmath.org/?q=an:#1}{\tiny{\sffamily [Zbl\,#1]}}}

  \bib{AtiyahTall}{article}{
      author={Atiyah, Michael F.},
      author={Tall, David O.},
      title={Group representations, $\lambda $-rings and the $J$-homomorphism},
      journal={Topology},
      volume={8},
      date={1969},
      pages={253--297},
      note={ \mrev{0244387} \zbl{0159.53301}},
    }

    \bib{BGL:LinearPreservers}{article}{
      title={Linear preservers and representations with a 1-dimensional ring of invariants},
      author={Bermudez, Hernando},
      author={Garibaldi, Skip},
      author={Larsen, Victor},
      journal={Trans. Amer. Math. Soc.},
      volume={366},
      date={2014},
      number={9},
      pages={4755--4780},
      issn={0002-9947},
      note={ \doi{10.1090/S0002-9947-2014-06081-9} \mrev{3217699} \zbl{1296.15012}},
    }

    \bib{BlockZassenhaus}{article}{
      author={Block, Richard E.},
      author={Zassenhaus, Hans},
      title={The Lie algebras with a nondegenerate trace form},
      journal={Illinois J. Math.},
      volume={8},
      date={1964},
      pages={543--549},
      issn={0019-2082},
      note={ \mrev{0167503} \zbl{0131.27102}},
    }

    \bib{Borel:LAG}{book}{
      author={Borel, Armand},
      title={Linear algebraic groups},
      series={Graduate Texts in Mathematics},
      volume={126},
      edition={2},
      publisher={Springer-Verlag},
      place={New York},
      date={1991},
      pages={xii+288},
      note={ \doi{10.1007/978-1-4612-0941-6} \mrev{1102012} \zbl{0726.20030}},
    }

    \bib{Borger:Positivity}{article}{
      author={Borger,James},
      title={Witt vectors, semirings, and total positivity},
      date={2013},
      note={\arxiv{1310.3013}},
    }

    \bib{Bourbaki:Algebre}{book}{
      author={Bourbaki, Nicolas},
      title={\'El\'ements de math\'ematique. Alg\`ebre. Chapitres 1 \`a 3},
      publisher={Hermann},
      place={Paris},
      date={1970},
      pages={xiii+635 pp. (not consecutively paged)},
      note={ \mrev{0274237} \zbl{0211.02401}},
    }

    \bib{BtD:Lie}{book}{
      author={Br{\"o}cker, Theodor},
      author={tom Dieck, Tammo},
      title={Representations of compact Lie groups},
      series={Graduate Texts in Mathematics},
      volume={98},
      publisher={Springer-Verlag},
      place={New York},
      date={1995},
      pages={x+313},
      note={ \mrev{1410059} \zbl{0874.22001}},
    }

    \bib{CalmesHornbostel:reductive}{article}{
      author={Calmès, Baptiste},
      author={Hornbostel, Jens},
      title={Witt motives, transfers and reductive groups},
      date={2004},
      eprint={http://www.mathematik.uni-bielefeld.de/lag/man/143.html},
    }

    \bib{Eisenbud}{book}{
      author={Eisenbud, David},
      title={Commutative algebra},
      series={Graduate Texts in Mathematics},
      volume={150},
      publisher={Springer-Verlag},
      place={New York},
      date={1995},
      pages={xvi+785},
      note={ \doi{10.1007/978-1-4612-5350-1} \mrev{1322960} \zbl{0819.13001}},
    }

    \bib{FultonHarris}{book}{
      author={Fulton, William},
      author={Harris, Joe},
      title={Representation theory},
      series={Graduate Texts in Mathematics},
      volume={129},
      publisher={Springer-Verlag},
      place={New York},
      date={1991},
      pages={xvi+551},
      note={ \doi{10.1007/978-1-4612-0979-9} \mrev{1153249} \zbl{0744.22001}},
    }

    \bib{FultonLang}{book}{
      author={Fulton, William},
      author={Lang, Serge},
      title={Riemann-Roch algebra},
      series={Grundlehren der Mathematischen Wissenschaften},
      volume={277},
      publisher={Springer-Verlag},
      place={New York},
      date={1985},
      pages={x+203},
      isbn={0-387-96086-4},
      note={ \mrev{801033} \zbl{0579.14011}},
    }

    \bib{Grinberg}{article}{
      author={Grinberg, Darij},
      title={\(\lambda\)-rings: Definitions and basic properties},
      note={Version 0.0.17},
      date={2012},
    }

    \bib{Jantzen}{book}{
      author={Jantzen, Jens Carsten},
      title={Representations of algebraic groups},
      series={Mathematical Surveys and Monographs},
      volume={107},
      edition={2},
      publisher={American Mathematical Society},
      place={Providence, RI},
      date={2003},
      pages={xiv+576},
      note={ \mrev{2015057} \zbl{1034.20041}},
    }

    \bib{McGarraghy:exterior}{article}{
      author={McGarraghy, Se{\'a}n},
      title={Exterior powers of symmetric bilinear forms},
      journal={Algebra Colloq.},
      volume={9},
      date={2002},
      number={2},
      pages={197--218},
      issn={1005-3867},
      note={ \mrev{1901274} \zbl{1007.11016}},
    }

    \bib{McGarraghy:symmetric}{article}{
      author={McGarraghy, Se{\'a}n},
      title={Symmetric powers of symmetric bilinear forms},
      journal={Algebra Colloq.},
      volume={12},
      date={2005},
      number={1},
      pages={41--57},
      issn={1005-3867},
      note={ \mrev{2127277} \zbl{1070.11013}},
    }

    \bib{QSS}{article}{
      author={Quebbemann, Heinz-Georg},
      author={Scharlau, Winfried},
      author={Schulte,  Manfred},
      title={Quadratic and Hermitian forms in additive and abelian categories},
      journal={J. Algebra},
      volume={59},
      date={1979},
      number={2},
      pages={264--289},
      issn={0021-8693},
      note={\doi{10.1016/0021-8693(79)90126-1} \mrev{543249} \zbl{0412.18016}},
    }

    \bib{SGA6}{book}{
      label={SGA6},
      title={Th\'eorie des intersections et th\'eor\`eme de Riemann-Roch},
      series={Lecture Notes in Mathematics, Vol. 225},
      note={S\'eminaire de G\'eom\'etrie Alg\'ebrique du Bois-Marie 1966--1967
        (SGA~6)
      },
      publisher={Springer-Verlag},
      place={Berlin},
      date={1971},
      pages={xii+700},
      note={ \mrev{0354655} \zbl{0218.14001}},
    }

    \bib{Serre}{article}{
      author={Serre, Jean-Pierre},
      title={Groupes de Grothendieck des sch\'emas en groupes r\'eductifs d\'eploy\'es},
      journal={Inst. Hautes \'Etudes Sci. Publ. Math.},
      number={34},
      date={1968},
      pages={37--52},
      issn={0073-8301},
      note={ \mrev{0231831} \zbl{0195.50802}},
    }

    \bib{Steinberg:Lectures}{book}{
      author={Steinberg, Robert},
      title={Lectures on Chevalley groups},
      note={Notes prepared by John Faulkner and Robert Wilson},
      publisher={Yale University, New Haven, Conn.},
      date={1968},
      note={ \mrev{0466335} \zbl{1196.22001}},
    }

    \bib{Waterhouse}{book}{
      author={Waterhouse, William C.},
      title={Introduction to affine group schemes},
      series={Graduate Texts in Mathematics},
      volume={66},
      publisher={Springer-Verlag},
      place={New York},
      date={1979},
      pages={xi+164},
      note={\mrev{547117} \zbl{0442.14017}},
    }

    \bib{Weibel:K}{book}{
      author={Weibel, Charles A.},
      title={The $K$-book},
      series={Graduate Studies in Mathematics},
      volume={145},
      publisher={American Mathematical Society},
      place={Providence, RI},
      date={2013},
      pages={xii+618},
      note={\mrev{3076731} \zbl{1273.19001}},
   }
\end{biblist}
\end{bibdiv}

\end{document}